\definecolor{bgcolor}{rgb}{0.76,0.88,0.50}
\definecolor{bgcolor0}{rgb}{0.93,0.99,1}
\definecolor{bgcolor1}{rgb}{0.8,1,1}
\definecolor{bgcolor2}{rgb}{0.8,1,0.8}
\definecolor{bgcolor3}{rgb}{0.50,0.90,0.50}
\definecolor{mydarkgreen}{rgb}{39,130,67}
\definecolor{mydarkred}{rgb}{192,25,25}
\newcommand{\norm}[1]{\left\| #1 \right\|}
\newcommand{\inp}[2]{\left\langle#1,#2\right\rangle} 
\newcommand{\R}{\mathbb{R}} 
\newcommand{\N}{\mathbb{N}} 
\newcommand{\Exp}[1]{{\mathbb{E}}\left[#1\right]}
\newcommand{\ExpSub}[2]{{\mathbb{E}}_{#1}\left[#2\right]}
\newcommand{\cC}{\mathcal{C}}
\newcommand{\cM}{\mathcal{M}}
\newcommand{\cO}{\mathcal{O}}
\theoremstyle{plain}
\newtheorem{theorem}{Theorem}[section]
\newtheorem{lemma}[theorem]{Lemma}
\theoremstyle{definition}
\newtheorem{definition}[theorem]{Definition}
\newtheorem{assumption}[theorem]{Assumption}
\theoremstyle{remark}
\newtheorem{remark}[theorem]{Remark}
\newcommand{\eqdef}{:=}
\newcommand{\vast}{\bBigg@{4}}
\DeclareMathOperator*{\argmin}{arg\,min}
\newcommand{\squeeze}{\textstyle}
\providecommand\theHALG@line{\thealgorithm.\arabic{ALG@line}}
\newcommand{\alglinelabel}{%
  \label
}
\newcommand{\alglinelabeladiana}{%
  \label
}
\newcommand{\alglinelabelef}{%
  \label
}
\newcounter{mynumber}
\title{\algnamebig{2Direction:} Theoretically Faster Distributed Training with Bidirectional Communication Compression}
\definecolor{myxxx}{RGB}{130,67,39}
\definecolor{mydarkgreen}{RGB}{39,130,67}
\definecolor{mydarkorange}{RGB}{236,147,14}
\definecolor{mydarkred}{RGB}{192,47,25}
\newcommand{\green}{\color{mydarkgreen}}
\newcommand{\red}{\color{mydarkred}}
\newcommand{\algnamebig}[1]{{\sf\green #1}}
\newcommand{\algname}[1]{{\small\green\sf #1}}
\newcommand{\algnamesmall}[1]{{\scriptsize\green\sf #1}}
\author{%
  Alexander Tyurin\\
  KAUST\\
  Saudi Arabia\\
  \texttt{alexandertiurin@gmail.com} \\
  \And
  Peter Richt\'{a}rik \\
  KAUST\\
  Saudi Arabia\\
  \texttt{richtarik@gmail.com} \\
}
\begin{document}

\newread\myread
\openin\myread=result.txt
\read\myread to \formulakappaexpand
\read\myread to \formulakappaconst
\read\myread to \formulakappasol
\read\myread to \formularhoexpand
\read\myread to \formularhoconst
\read\myread to \formularhosol
\read\myread to \formulabregman
\read\myread to \formulabregmanconst
\read\myread to \formuladist
\read\myread to \formuladistconst
\read\myread to \formulafollows

\maketitle

\begin{abstract}
  We consider distributed convex optimization problems in the regime when the communication between the server and the workers is expensive in both uplink and downlink directions. We develop a new and provably accelerated method, which we call \algname{2Direction}, based on fast bidirectional compressed communication and a new bespoke error-feedback mechanism which may be of independent interest. Indeed, we find that the \algname{EF} and \algname{EF21-P} mechanisms \citep{Seide2014, gruntkowska2022ef21} that have considerable success in the design of efficient non-accelerated methods are not appropriate for accelerated methods. In particular, we prove that \algname{2Direction} improves the previous state-of-the-art communication complexity $\widetilde{\Theta}\left(K \times \left(\nicefrac{L}{\alpha \mu} + \nicefrac{L_{\max} \omega}{n \mu} + \omega\right)\right)$ \citep{gruntkowska2022ef21} to $\widetilde{\Theta}(K \times (\sqrt{\nicefrac{L (\omega + 1)}{\alpha \mu}} + \sqrt{\nicefrac{L_{\max} \omega^2}{n \mu}} + \nicefrac{1}{\alpha} + \omega))$ in the $\mu$--strongly-convex setting, where $L$ and $L_{\max}$ are smoothness constants, $n$ is \# of workers, $\omega$ and $\alpha$ are compression errors of the Rand$K$ and Top$K$ sparsifiers (as examples), $K$ is \# of coordinates/bits that the server and workers send to each other. Moreover, our method is the first that improves upon the communication complexity of the vanilla accelerated gradient descent (\algname{AGD}) method \citep{nesterov2018lectures}. We obtain similar improvements in the general convex regime as well. Finally, our theoretical findings are corroborated by experimental evidence.
\end{abstract}

\section{Introduction}

We consider convex optimization problems in the centralized distributed setting. These types of problems appear in federated learning \citep{konevcny2016federated, mcmahan2017communication} and distributed optimization \citep{ramesh2021zero}. In this setting, one of the main problems is the communication bottleneck: the connection link between the server and the workers can be very slow. We focus our attention on methods that aim to address this issue by applying {\em lossy compression} to the communicated messages \citep{alistarh2017qsgd, DIANA,gruntkowska2022ef21}.  

\subsection{The problem} Formally, we consider the  optimization problem
\begin{align}
   \label{eq:main_task}
 \squeeze \min \limits_{x \in \R^d} \left\{f(x) \eqdef \frac{1}{n} \sum\limits_{i=1}^n f_i(x)\right\},
\end{align}
where $n$ is the number of workers and $f_i \,:\, \R^d \rightarrow \R$ are smooth convex functions for all $i \in [n] \eqdef \{1, \dots, n\}.$ We consider the {\em centralized distributed optimization} setting in which
 each $i$\textsuperscript{th} worker contains the function $f_i,$ and all workers are directly connected to a server \citep{kairouz2021advances}.
In general, we want to find a (possibly random) point $\widehat{x}$ such that $\Exp{f(\widehat{x})} - f(x^*) \leq \varepsilon,$ where $x^*$ is an optimal point. In the strongly convex setup, we also want to guarantee that $\mathbb{E}[\norm{\widetilde{x} - x^*}^2] \leq \varepsilon$ for some point $\widetilde{x}.$

Virtually all other theoretical works in this genre assume that, compared to the worker-to-server (w2s) communication cost, the server-to-workers (s2w) broadcast is so fast  that it can be ignored. We lift this limitation and instead associate a relative cost $r \in [0,1]$ with the two directions of communication. If $r=0$, then s2w communication is free,  if $r=1$, then w2s communication is free, and if $r=\nicefrac{1}{2}$, then the s2w and w2s costs are equal.  All our theoretical results hold for any $r \in [0,1]$. We formalize and elaborate upon this setup in Section~\ref{sec:current}.

\subsection{Assumptions}

Throughout the paper we rely on several standard assumptions on the functions $f_i$ and $f.$

\begin{assumption}
  \label{ass:workers_lipschitz_constant}
Functions $f_i$ are $L_i$--smooth, i.e., 
  $\norm{\nabla f_i(x) - \nabla f_i(y)} \leq L_i \norm{x - y}$ for all $x, y \in \R^d,$ for all $i \in [n].$
We let $L_{\max} \eqdef \max_{i \in [n]} L_i$. Further, let $\widehat{L}>0$ be a constant such that
  $\frac{1}{n} \sum_{i=1}^n \norm{\nabla f_i(x) - \nabla f_i(y)}^2 \leq \widehat{L}^2 \norm{x - y}^2$ for all $x, y \in \R^d.$\\
  Note that if the functions $f_i$ are $L_i$--smooth for all $i \in [n],$ then $\widehat{L} \leq L_{\max}.$
\end{assumption}
\begin{assumption}
  \label{ass:lipschitz_constant}
Function $f$ is $L$--smooth, i.e., $\norm{\nabla f(x) - \nabla f(y)} \leq L \norm{x - y}$ for all $x, y \in \R^d$.
\end{assumption}
\begin{assumption}
  \label{ass:convex}
Functions $f_i$ are convex for all $i \in [n],$ and  $f$ is $\mu$-strongly convex with $\mu \geq 0$, attaining a minimum at some point $x^* \in \R^d.$    
\end{assumption}
It is known that the above smoothness constants are related in the following way.
\begin{lemma}[\citet{gruntkowska2022ef21}]
  \label{lemma:lipt_constants}
If Assumptions~\ref{ass:lipschitz_constant}, \ref{ass:workers_lipschitz_constant} and \ref{ass:convex} hold, then $\widehat{L} \leq L_{\max} \leq n L$ and $L \leq \widehat{L} \leq \sqrt{L_{\max} L}.$
\end{lemma}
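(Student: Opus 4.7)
The lemma asserts four inequalities, which I would organize into a chain: two are essentially definitional averaging arguments, one is a classical convex-smooth composition fact, and the fourth is the interesting one relying on the Baillon--Haddad co-coercivity identity.

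For the direct bounds, I would first dispatch $\widehat{L}\le L_{\max}$ by observing that Assumption~\ref{ass:workers_lipschitz_constant} gives $\norm{\nabla f_i(x)-\nabla f_i(y)}^2\le L_i^2\norm{x-y}^2 \le L_{\max}^2\norm{x-y}^2$, and averaging this over $i\in[n]$ shows that $L_{\max}$ is an admissible value of the constant, hence larger than the tightest one $\widehat{L}$. The opposite-side bound $L\le\widehat{L}$ I would derive from Jensen's inequality applied to the average of gradients: $\norm{\nabla f(x)-\nabla f(y)}^2=\norm{\tfrac{1}{n}\sum_i(\nabla f_i(x)-\nabla f_i(y))}^2\le \tfrac{1}{n}\sum_i\norm{\nabla f_i(x)-\nabla f_i(y)}^2\le \widehat{L}^2\norm{x-y}^2$.

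For $L_{\max}\le nL$ I would avoid invoking twice-differentiability and use the descent-lemma characterisation of smoothness: $f$ being $L$-smooth and $f_i$ convex gives, for any fixed $j$,
\begin{align*}
f_j(y)-f_j(x)-\inp{\nabla f_j(x)}{y-x}
&\le \sum_{i=1}^n\big(f_i(y)-f_i(x)-\inp{\nabla f_i(x)}{y-x}\big)\\
&= n\big(f(y)-f(x)-\inp{\nabla f(x)}{y-x}\big)\le \tfrac{nL}{2}\norm{y-x}^2,
\end{align*}
where the first inequality drops the $i\ne j$ convex Bregman terms (each of which is $\ge 0$). Hence each $f_j$ is $nL$-smooth, so $L_j\le nL$ and taking the maximum gives $L_{\max}\le nL$.

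The hard part, and the step I would spend the most care on, is $\widehat{L}\le\sqrt{L_{\max}L}$. Here the idea is to combine Baillon--Haddad co-coercivity (convex plus $L_i$-smooth) with Cauchy--Schwarz plus smoothness of the aggregate. Concretely, for each $i$ one has $\norm{\nabla f_i(x)-\nabla f_i(y)}^2\le L_i\inp{\nabla f_i(x)-\nabla f_i(y)}{x-y}\le L_{\max}\inp{\nabla f_i(x)-\nabla f_i(y)}{x-y}$. Averaging over $i$ pulls the linear form inside and collapses the sum into the aggregate gradient:
\begin{align*}
\tfrac{1}{n}\sum_{i=1}^n\norm{\nabla f_i(x)-\nabla f_i(y)}^2
\le L_{\max}\inp{\nabla f(x)-\nabla f(y)}{x-y}
\le L_{\max}L\norm{x-y}^2,
\end{align*}
where the last step uses Cauchy--Schwarz together with Assumption~\ref{ass:lipschitz_constant}. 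This shows that $\sqrt{L_{\max}L}$ is an admissible value of the constant in the definition of $\widehat{L}$, so $\widehat{L}\le\sqrt{L_{\max}L}$. The only subtlety I anticipate is making sure the co-coercivity step is justified without assuming differentiability beyond what Assumption~\ref{ass:workers_lipschitz_constant} provides, but smoothness in the gradient-Lipschitz sense together with convexity is exactly the hypothesis of the Baillon--Haddad theorem, so this is clean.
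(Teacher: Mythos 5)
Your proof is correct; the paper itself does not prove this lemma but imports it from \citet{gruntkowska2022ef21}, and your argument is essentially the standard one behind that result: Jensen's inequality for $L \leq \widehat{L}$, averaging the per-worker smoothness bounds for $\widehat{L} \leq L_{\max}$, nonnegativity of the individual Bregman divergences combined with the descent lemma for $L_{\max} \leq nL$, and co-coercivity (the same fact recorded as Lemma~\ref{lemma:lipt_func}) averaged over $i$ together with Cauchy--Schwarz for $\widehat{L} \leq \sqrt{L_{\max} L}$. The only points worth making explicit are that the inequalities are read with $L$, $L_i$, $\widehat{L}$ understood as the smallest admissible constants, and that passing from the quadratic Bregman bound $f_j(y)-f_j(x)-\inp{\nabla f_j(x)}{y-x} \leq \tfrac{nL}{2}\norm{y-x}^2$ to $nL$-Lipschitzness of $\nabla f_j$ uses the convexity of $f_j$ (the standard equivalence for convex smooth functions), which Assumption~\ref{ass:convex} indeed provides.
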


\begin{table*}
  \caption{\footnotesize\textbf{Communication Rounds in the Strongly Convex Case.} The number of communication rounds and rounds costs to get an $\varepsilon$-solution ($\Exp{\norm{\widehat{x} - x^*}^2} \leq \varepsilon$) up to logarithmic factors. The table shows the most relevant bidirectional compressed methods that are ordered by the total communication complexity \# {\bf Communication Rounds} $ \times $ {\bf Round Cost} (see \eqref{eq:main_complexity} for details). \\
  \phantom{XX} i. The parameter $r$ weights the importance/speed of uplink and downlink connections. When $r = \nicefrac{1}{2},$ it means that the uplink and downlink speeds are equal. \\
   \phantom{XX}  ii. The parameters $K_{\omega}$ and $K_{\alpha}$ are the expected densities Definition~\ref{def:expected_density} of compressors $\cC^{D} \in \mathbb{U}(\omega)$ and $\cC^{P} \in \mathbb{B}(\alpha)\textsuperscript{\color{blue}(a)},$ that operate in the workers and the server accordingly. Less formally, $K_{\omega}$ and $K_{\alpha}$ are the number of coordinates/bits that the workers and the server send to each other in each communication round.}
  \label{table:strongly_convex_case}
  \centering 
   \scriptsize  
  \begin{threeparttable}
    \begin{tabular}{cccccc}
\toprule
   {\bf Method} & \# {\bf Communication Rounds} & {\bf Round Cost\textsuperscript{\color{blue}(c)}}\\
   \midrule
   \scriptsize \makecell{\algnamesmall{Dore}, \algnamesmall{Artemis}, \algnamesmall{MURANA}\textsuperscript{\color{blue}(a)} \\
      \citep{liu2020double} \\
      \citep{philippenko2020artemis} \\ \citep{condat2022murana}} & $\widetilde{\Omega}\left(\frac{\omega}{\alpha n}\frac{L_{\max}}{\mu}\right)$\textsuperscript{\color{blue}(f)} & {\tiny $(1 - r)K_{\omega} + r K_{\alpha}$}\\
      \midrule
      \scriptsize \makecell{\algnamesmall{MCM}\textsuperscript{\color{blue}(a)} \\ \citep{philippenko2021preserved}} & $\widetilde{\Omega}\left(\left(\frac{1}{\alpha^{3/2}} + \frac{\omega^{1/2}}{\alpha \sqrt{n}} + \frac{\omega}{n}\right)\frac{L_{\max}}{\mu}\right)$\textsuperscript{\color{blue}(f)} & {\tiny $(1 - r)K_{\omega} + r K_{\alpha}$}\\
    \midrule
    \scriptsize \makecell{\algnamesmall{GD} \\ \citep{nesterov2018lectures}} & $\frac{L}{\mu}$ & {\tiny $d$} \\
     \midrule
     \scriptsize \makecell{\algnamesmall{EF21-P + DIANA} \\
     \citep{gruntkowska2022ef21}} & $\squeeze \frac{L}{\alpha \mu} + \frac{L_{\max} \omega}{n \mu} + \omega$ & {\tiny $(1 - r)K_{\omega} + r K_{\alpha}$}\\
     \midrule
     \scriptsize \makecell{\algnamesmall{AGD} \\ \citep{nesterov2018lectures}} & $\sqrt{\frac{L}{\mu}}$ & {\tiny $d$} \\
    \midrule
    \cellcolor{bgcolor1} \begin{tabular}{c} \makecell{\algnamesmall{2Direction}} \\ \scriptsize (Remark~\ref{rem:realistic})\textsuperscript{\color{blue}(b)}, (Theorem~\ref{cor:realistic})\end{tabular} & \cellcolor{bgcolor1} $\sqrt{\frac{L (\omega + 1)}{\alpha \mu}} + \sqrt{\frac{L_{\max} \omega^2}{n \mu}} + \frac{1}{\alpha} + \omega$ & \cellcolor{bgcolor1} {\tiny $(1 - r)K_{\omega} + r K_{\alpha}$} \\
  \midrule
  \cellcolor{bgcolor1} \begin{tabular}{c} \makecell{\algnamesmall{2Direction}} \\ \scriptsize(Remark~\ref{rem:optimistic})\textsuperscript{\color{blue}(b)}, (Theorem~\ref{cor:optimistic}) \\ \scriptsize (requires $\nicefrac{L_{\max}}{L}$)\textsuperscript{\color{blue}(d)}\end{tabular} & \cellcolor{bgcolor1} \makecell{$\sqrt{\frac{L \max\{1, r \left(\omega + 1\right)\}}{\alpha \mu}} + \sqrt{\frac{L^{2/3} L_{\max}^{1/3} (\omega + 1)}{\alpha n^{1/3} \mu}} +$ \\
  $ \sqrt{\frac{L^{1/2} L_{\max}^{1/2} (\omega + 1)^{3/2}}{\sqrt{\alpha n} \mu}} + \sqrt{\frac{L_{\max} \omega^2}{n \mu}} + \frac{1}{\alpha} + \omega$} & \cellcolor{bgcolor1} {\tiny $(1 - r)K_{\omega} + r K_{\alpha}$} \\
    \midrule
    \bottomrule
    \end{tabular}
    \begin{tablenotes}
      \scriptsize
      \item [{\color{blue}(a)}] The \algnamesmall{Dore}, \algnamesmall{Artemis}, \algnamesmall{MURANA}, and \algnamesmall{MCM} methods do not support {\em biased} compressors for server-to-worker compression. In these methods,  the error $\alpha$ equals $\nicefrac{1}{(\omega_{\textnormal{s}} + 1)},$ where the error $\omega_{\textnormal{s}}$ is a parameter of an unbiased compressor that is used in server-to-worker compression. For these methods, we define $\nicefrac{1}{(\omega_{\textnormal{s}} + 1)}$ as $\alpha$ to make comparison easy with \algnamesmall{EF21-P + DIANA} and \algnamesmall{2Direction}.
      \item [{\color{blue}(b)}] In this table, we present the simplified iteration complexity of \algnamesmall{2Direction} assuming that $r \leq \nicefrac{1}{2}$ and $\omega + 1 = \Theta\left(\nicefrac{d}{K_{\omega}}\right).$ The full complexities are in \eqref{eq:realistic_compl} and \eqref{eq:optimistic_compl}. In Section~\ref{sec:sanity_check}, we show that \algnamesmall{2Direction} has no worse total communication complexity than \algnamesmall{EF21-P + DIANA} for all $r \in [0, 1]$ and for any choice of compressors.
      \item [{\color{blue}(c)}] We define {\bf Round Cost} of a method $\cM$ as a constant such that $\mathfrak{m}^{r}_{{\scriptscriptstyle \cM}} = $ \# {\bf Communication Rounds} $ \times $ {\bf Round Cost}, where $\mathfrak{m}^{r}_{{\scriptscriptstyle \cM}}$ is the total communication complexity \eqref{eq:main_complexity}.
      \item [{\color{blue}(d)}] \algnamesmall{2Direction} can have even better total communication complexity if the algorithm can use the ratio $\nicefrac{L_{\max}}{L}$ when selecting the parameters $\tau$ and $p$ in Algorithm~\ref{alg:bi_diana}. For instance, this is the case if we assume that  $L_{\max} = L, $ which was done by \citet{ADIANA, li2021canita}, for example.
      \item [{\color{blue}(f)}] The notation $\widetilde{\Omega}\left(\cdot\right)$ means ``at least up to logarithmic factors.''
    \end{tablenotes}
\end{threeparttable}      
\end{table*}

\section{Motivation: From Unidirectional to Bidirectional Compression} \label{sec:current}

In this work, we  distinguish between  worker-to-server (w2s=uplink) and  server-to-worker (s2w=downlink) communication cost, and define w2s and s2w communication complexities of methods in the following natural way.

\begin{definition}
  \label{def:compl}
  For a centralized distributed method $\cM$ aiming to solve problem \eqref{eq:main_task}, the communication complexity $\mathfrak{m}^{{\scriptscriptstyle \textnormal{w2s}}}_{{\scriptscriptstyle \cM}}$ is the expected number of coordinates/floats\footnote{Some works measure bits instead of coordinates. For computer systems, where coordinates are represented by 32 or 64 bits, these measures are equivalent up to the constant factors 32 or 64.} that each worker sends to the server to  solve problem \eqref{eq:main_task}. The quantity $\mathfrak{m}^{{\scriptscriptstyle \textnormal{s2w}}}_{{\scriptscriptstyle \cM}}$ is the expected number of floats/coordinates the server broadcasts to the workers to solve problem \eqref{eq:main_task}. If $\mathfrak{m}^{{\scriptscriptstyle \textnormal{w2s}}}_{{\scriptscriptstyle \cM}} = \mathfrak{m}^{{\scriptscriptstyle \textnormal{s2w}}}_{{\scriptscriptstyle \cM}},$ then we use the simplified notation $\mathfrak{m}_{{\scriptscriptstyle \cM}} \eqdef \mathfrak{m}^{{\scriptscriptstyle \textnormal{s2w}}}_{{\scriptscriptstyle \cM}} = \mathfrak{m}^{{\scriptscriptstyle \textnormal{w2s}}}_{{\scriptscriptstyle \cM}}.$
\end{definition}

Let us illustrate the above concepts on the simplest baseline: vanilla gradient descent (\algname{GD}). It is well known \citep{nesterov2018lectures} that for $L$--smooth, $\mu$--strongly convex problems, \algname{GD} returns an $\varepsilon$-solution after $\cO\left(\nicefrac{L}{\mu}\log\nicefrac{1}{\varepsilon}\right)$ iterations. In each iteration, the workers and the server communicate all $\Theta(d)$ coordinates to each other (since  no compression is applied). Therefore, the communication complexity of \algname{GD} is $\mathfrak{m}_{{\scriptscriptstyle \textnormal{GD}}} = \Theta\left(\nicefrac{d L}{\mu}\log\nicefrac{1}{\varepsilon}\right).$ The same reasoning applies to the accelerated gradient method (\algname{AGD}) \citep{nesterov2018lectures}, whose communication complexity is $\mathfrak{m}_{{\scriptscriptstyle \textnormal{AGD}}} = \Theta(d \sqrt{\nicefrac{L}{\mu}}\log\nicefrac{1}{\varepsilon}).$

\subsection{Compression mappings}

In the literature, researchers often use the following two families of compressors:
\begin{definition}
  \label{def:biased_compression}
  A (possibly) stochastic mapping $\cC\,:\,\R^d \rightarrow \R^d$ is a \textit{biased  compressor} if
  there exists $\alpha \in (0,1]$ such that
  \begin{align}
      \label{eq:biased_compressor}
      \qquad \Exp{\norm{\cC(x) - x}^2} \leq (1 - \alpha) \norm{x}^2, \qquad \forall x \in \R^d.
  \end{align}
\end{definition}

\begin{definition}
  \label{def:unbiased_compression}
  A stochastic mapping $\cC\,:\,\R^d \rightarrow \R^d$ is an \textit{unbiased compressor} if
  there exists $\omega \geq 0$ such that
  \begin{align}
      \label{eq:compressor}
      \Exp{\cC(x)} = x, \qquad \Exp{\norm{\cC(x) - x}^2} \leq \omega \norm{x}^2, \qquad \forall x \in \R^d.
  \end{align}
\end{definition}

We will make use of the following assumption. 
\begin{assumption}
  \label{ass:unbiased_compressors}
The randomness in all compressors used in our method is drawn independently.
\end{assumption}

Let us denote the set of mappings satisfying Definition~\ref{def:biased_compression} and \ref{def:unbiased_compression} by $\mathbb{B}(\alpha)$ and $\mathbb{U}(\omega)$, respectively. The family of biased compressors $\mathbb{B}$ is wider. Indeed, it is well known if $\cC \in \mathbb{U}(\omega),$ then $\nicefrac{1}{(\omega + 1)} \cdot \cC \in \mathbb{B}\left(\nicefrac{1}{(\omega + 1)}\right)$. The canonical sparsification operators belonging to these classes are Top$K \in \mathbb{B}(\nicefrac{K}{d})$ and Rand$K \in \mathbb{U}(\nicefrac{d}{K} - 1)$. The former  outputs the $K$ largest values (in magnitude) of the input vector, while the latter outputs $K$ random values of the input vector, scaled by $\nicefrac{d}{K}$ \citep{beznosikov2020biased}.  Following \citep{MARINA,tyurin2022dasha}, we now define the {\em expected density} of a sparsifier as a way to  formalize its {\em compression} performance.
\begin{definition}
  \label{def:expected_density}
The expected density of a sparsifier $\cC:\R^d\to \R^d$ is the quantity $K_{\cC}\eqdef\sup_{x \in \R^d} \Exp{\norm{\cC(x)}_0},$ where $\norm{y}_0$ is the number of of non-zero components of $y \in \R^d.$
\end{definition}
Trivially, for the Rand$K$ and Top$K$ sparsifiers we have  $K_{\cC} = K.$

\subsection{Unidirectional (i.e., w2s) compression}
\label{sec:worker_to_server}

As mentioned in the introduction, virtually all theoretical works in the area of compressed communication ignore s2w communication cost and instead aim to minimize $\mathfrak{m}^{{\scriptscriptstyle \textnormal{w2s}}}_{{\scriptscriptstyle \cM}}.$  Algorithmic work related to methods that only perform w2s compression has a long history, and this area is  relatively well understood \citep{alistarh2017qsgd,DIANA,richtarik2021ef21}. 

We refer to the work of \citet{gruntkowska2022ef21} for a detailed discussion of the communication complexities of {\em non-accelerated} methods in the convex and non-convex settings. For instance, using  Rand$K$, the \algname{DIANA} method of \citet{DIANA} provably improves\footnote{Indeed, using Lemma~\ref{lemma:lipt_constants}, $K \leq d,$ and $L \geq \mu$, one can easily show that $d + \nicefrac{K L}{\mu} + \nicefrac{d L_{\max}}{n \mu} = \cO(\nicefrac{d L}{\mu}).$} the communication complexity of \algname{GD} to $\mathfrak{m}^{{\scriptscriptstyle \textnormal{w2s}}}_{{\scriptscriptstyle \textnormal{DIANA}}} = \widetilde{\Theta}\left(d + \nicefrac{K L}{\mu} + \nicefrac{d L_{\max}}{n \mu}\right).$ {\em Accelerated} methods focusing on w2s compression are also well investigated. For example, \citet{ADIANA} and \citet{li2021canita} developed accelerated methods, which are based on \citep{DIANA, kovalev2020don}, and provably improve the w2s complexity of \algname{DIANA}. Moreover, using Rand$K$  with $K \leq \nicefrac{d}{n}$, \algname{ADIANA} improves the communication complexity of \algname{AGD} to $\mathfrak{m}^{{\scriptscriptstyle \textnormal{w2s}}}_{{\scriptscriptstyle \textnormal{ADIANA}}} = \widetilde{\Theta}(d + d\sqrt{\nicefrac{L_{\max}}{n \mu}}).$

\subsection{Bidirectional  (i.e., w2s and s2w) compression}

The methods mentioned in Section~\ref{sec:worker_to_server} do {\em not} perform server-to-workers (s2w) compression, and one can show that the server-to-workers (s2w) communication complexities of these methods are worse than $\mathfrak{m}_{{\scriptscriptstyle \textnormal{AGD}}} = \widetilde{\Theta}(d \sqrt{\nicefrac{L}{\mu}}).$ For example, using the Rand$K$, the s2w communication complexity of \algname{ADIANA} is at least $\mathfrak{m}^{{\scriptscriptstyle \textnormal{s2w}}}_{{\scriptscriptstyle \textnormal{ADIANA}}} = \widetilde{\Omega}(d \times \omega) = \widetilde{\Omega}(\nicefrac{d^2}{K}),$ which can be $\nicefrac{d}{K}$ times larger than in \algname{GD} or \algname{AGD}.  Instead of $\mathfrak{m}^{{\scriptscriptstyle \textnormal{w2s}}}_{{\scriptscriptstyle \cM}},$ methods performing bidirectional compression attempt to minimize \emph{the total communication complexity}, which we define as a convex combination of the w2s and s2w communication complexities:
\begin{align}
  \label{eq:main_complexity}
  \mathfrak{m}^{r}_{{\scriptscriptstyle \cM}} \eqdef (1 - r)\mathfrak{m}^{{\scriptscriptstyle \textnormal{w2s}}}_{{\scriptscriptstyle \cM}} + r \mathfrak{m}^{{\scriptscriptstyle \textnormal{s2w}}}_{{\scriptscriptstyle \cM}}.
\end{align}

The parameter $r \in [0, 1]$ weights the importance of uplink (w2s) and downlink (s2w) connections\footnote{$\mathfrak{m}^{r}_{{\scriptscriptstyle \cM}} \propto s^{{\scriptscriptstyle \textnormal{w2s}}} \mathfrak{m}^{{\scriptscriptstyle \textnormal{w2s}}}_{{\scriptscriptstyle \cM}} + s^{{\scriptscriptstyle \textnormal{s2w}}} \mathfrak{m}^{{\scriptscriptstyle \textnormal{s2w}}}_{{\scriptscriptstyle \cM}},$ where $s^{{\scriptscriptstyle \textnormal{w2s}}}$ and $s^{{\scriptscriptstyle \textnormal{s2w}}}$ are connection speeds, and $r = \nicefrac{s^{{\scriptscriptstyle \textnormal{s2w}}}}{s^{{\scriptscriptstyle \textnormal{w2s}}} + s^{{\scriptscriptstyle \textnormal{s2w}}}}.$}. 
Methods from Section~\ref{sec:worker_to_server} assume that $r=0$, thus ignoring the s2w communication cost. On the other hand, when $r = \nicefrac{1}{2},$ the uplink and downlink communication speeds are equal. By considering any $r\in [0,1]$, our methods and findings are applicable to more situations arising in practice. 
Obviously, $\mathfrak{m}^{r}_{{\scriptscriptstyle \textnormal{GD}}} = \mathfrak{m}_{{\scriptscriptstyle \textnormal{GD}}}$ and $\mathfrak{m}^{r}_{{\scriptscriptstyle \textnormal{AGD}}} = \mathfrak{m}_{{\scriptscriptstyle \textnormal{AGD}}}$ for all $r \in [0, 1].$ Recently, \citet{gruntkowska2022ef21} proposed the \algname{EF21-P + DIANA} method. This is the first  method supporting bidirectional compression that provably improves both the w2s and s2w complexities of \algname{GD}: $\mathfrak{m}^{r}_{{\scriptscriptstyle \textnormal{EF21-P + DIANA}}} \leq \mathfrak{m}_{\textnormal{GD}}$ for all $r \in [0, 1].$ Bidirectional methods designed before \algname{EF21-P + DIANA}, including \citep{DoubleSqueeze, liu2020double, philippenko2021preserved}, do not guarantee the total communication complexities better than that of \algname{GD}. The \algname{EF21-P + DIANA} method is {\em not} an accelerated method and, in the worst case, can have communication complexities worse than \algname{AGD} when the condition number $\nicefrac{L}{\mu}$ is large.

\section{Contributions}

Motivated by the above discussion, in this work we aim to address the following 
\makeatletter
\newcommand{\setword}[2]{%
  \phantomsection
  #1\def\@currentlabel{\unexpanded{#1}}\label{#2}%
}
\makeatother

{\bf \setword{Main Problem}{word:problem}}: 
\begin{quote}\bf Is it possible to develop a method supporting bidirectional communication compression that  improves the current best theoretical total communication complexity of \algname{EF21-P + DIANA}, and guarantees the total communication complexity to be no worse than the communication complexity 
$\mathfrak{m}_{{\scriptscriptstyle \textnormal{AGD}}} = \widetilde{\Theta}(d \sqrt{\nicefrac{L}{\mu}})$ of \algname{AGD}, while improving on \algname{AGD} in at least some regimes?
\end{quote}

\phantom{X} {\bf A)} We develop a new fast method (\algname{2Direction}; see Algorithm~\ref{alg:bi_diana}) supporting bidirectional communication compression. Our analysis leads to new state-of-the-art complexity rates in the centralized distributed setting (see Table~\ref{table:strongly_convex_case}), and as a byproduct, we answer \ref{word:problem} in the affirmative.

\phantom{X} {\bf B)} \citet{gruntkowska2022ef21} proposed to use the \algname{EF21-P} error-feedback mechanism \eqref{eq:ef21_p} to improve the convergence rates of  \emph{non-accelerated} methods supporting bidirectional communication compression. \algname{EF21-P} is a reparameterization of the celebrated \algname{EF} mechanism \citep{Seide2014}. We tried to use \algname{EF21-P} in our method as well, but failed. Our failures indicated that a fundamentally new approach is needed, and this eventually led us to design a new error-feedback mechanism \eqref{eq:unitracktable_w} that is more appropriate for \emph{accelerated} methods. We believe that this  is a contribution of independent interest that might motivate future growth in the area.

\phantom{X} {\bf C)} Unlike previous theoretical works \citep{ADIANA, li2021canita} on accelerated methods, we present a unified analysis  in both the $\mu$--strongly-convex and general convex cases. Moreover, in the general convex setting and low accuracy regimes, our analysis improves the rate $\cO\left(\nicefrac{1}{\varepsilon^{1/3}}\right)$ of \citet{li2021canita} to $\cO\left(\log \nicefrac{1}{\varepsilon}\right)$ (see details in Section~\ref{sec:canita}).

\phantom{X} {\bf D)} Even though our central goal was to obtain new SOTA \emph{theoretical} communication complexities for centralized distributed optimization, we show that the newly developed algorithm enjoys faster communication complexities in practice
as well (see details in Section~\ref{sec:experiments}).

\begin{algorithm}[ht]
  \caption{\algname{2Direction}: A Fast Gradient Method Supporting Bidirectional Compression} 
  \label{alg:bi_diana}
  \footnotesize
  \begin{algorithmic}[1]
  \STATE \textbf{Parameters:} Lipschitz-like parameter $\bar{L} > 0$, strong-convexity parameter $\mu \geq 0,$ probability $p \in (0, 1],$ parameter $\Gamma_0 \geq 1$, momentum $\tau \in (0, 1],$ contraction parameter $\alpha \in (0, 1]$ from \eqref{eq:biased_compressor}, initial point $x^0 \in \R^d,$ initial gradient shifts $h^0_1, \dots , h^0_n\in \R^d,$ gradient shifts $k^0 \in \R^d$ and $v^0 \in \R^d$
  \STATE Initialize $\beta = \nicefrac{1}{(\omega + 1)},$ $w^0 = z^0 = u^0 = x^0,$ and $h^0 = \frac{1}{n}\sum_{i = 1}^n h_i^0$
  \FOR{$t = 0, 1, \dots, T - 1$} 
  \STATE $\Gamma_{t+1}, \gamma_{t+1}, \theta_{t+1}= \textnormal{CalculateLearningRates}(\Gamma_t, \bar{L}, \mu, p, \alpha, \tau, \beta)$ \hfill{\scriptsize \color{gray} Get learning rates using Algorithm~\ref{alg:learning_rates}}
  \FOR{$i = 1, \dots, n$ in parallel}
  \STATE $y^{t+1} = \theta_{t+1} {\green w^t} + (1 - \theta_{t+1}) z^t$ \alglinelabel{line:bi_diana:y}
  \STATE $m_i^{t,y} = \cC_i^{D, y}(\nabla f_i(y^{t+1}) - h_i^t)$  
  \hfill{\scriptsize \color{gray} Worker $i$ compresses the shifted gradient via the compressor $\cC_i^{D, y} \in \mathbb{U}(\omega)$}
  \STATE {Send compressed message $m_i^{t,y}$ to the server}
  \ENDFOR
  \STATE $g^{t+1} = h^t + \frac{1}{n}\sum_{i=1}^n m_i^{t,y}$ 
  \STATE $u^{t+1} = \argmin_{x \in \R^d} \inp{g^{t+1}}{x} + \frac{\bar{L} + \Gamma_{t} \mu}{2 \gamma_{t+1}} \norm{x - u^t}^2 + \frac{\mu}{2} \norm{x - y^{t+1}}^2$ \hfill{\scriptsize \color{gray} A gradient-like descent step}
  \STATE {\green $q^{t+1} = \argmin_{x \in \R^d} \inp{k^t}{x} + \frac{\bar{L} + \Gamma_{t} \mu}{2 \gamma_{t+1}} \norm{x - w^t}^2 + \frac{\mu}{2} \norm{x - y^{t+1}}^2$} \alglinelabel{line:bi_diana:q}
  \STATE {\green $p^{t+1} = \cC^{P}\left(u^{t+1} - q^{t+1}\right)$}  \hfill {\scriptsize \color{gray} Server compresses the shifted model via the compressor $\cC^{P} \in \mathbb{B}\left(\alpha\right)$} \alglinelabel{line:bi_diana:p}
  \STATE {\green $w^{t+1} = q^{t+1} + p^{t+1}$} \alglinelabel{line:bi_diana:w}
  \STATE $x^{t+1} = \theta_{t+1} u^{t+1} + (1 - \theta_{t+1}) z^t$
  \STATE {\green Send compressed message $p^{t+1}$ to all $n$ workers} \alglinelabel{line:bi_diana:broadcast_compr}
  \STATE Flip a coin $c^t \sim \textnormal{Bernoulli}(p)$
  \STATE {\green $k^{t+1} = \begin{cases}
    v^{t}, &c^t = 1 \\
    k^{t}, &c^t = 0
  \end{cases}$} \quad and \quad $z^{t+1} = \begin{cases}
    x^{t+1}, &c^t = 1 \\
    z^{t}, &c^t = 0
  \end{cases}$ \alglinelabel{line:bi_diana:k}
  \IF{$c^t = 1$}
  \STATE {\green Broadcast non-compressed messages $x^{t+1}$ and $k^{t+1}$ to all $n$ workers} \hfill{\scriptsize \color{gray} With small probability $p$!} \alglinelabel{line:bi_diana:broadcast}
  \ENDIF
  \FOR{$i = 1, \dots, n$ in parallel}
  \STATE {\green $q^{t+1} = \argmin_{x \in \R^d} \inp{k^t}{x} + \frac{\bar{L} + \Gamma_{t} \mu}{2 \gamma_{t+1}} \norm{x - w^t}^2 + \frac{\mu}{2} \norm{x - y^{t+1}}^2$} \hfill{\scriptsize \color{gray}} \alglinelabel{line:bi_diana:q_local}
  \STATE {\green $w^{t+1} = q^{t+1} + p^{t+1}$} \alglinelabel{line:bi_diana:w_local}
  \STATE $z^{t+1} = \begin{cases}
    x^{t+1}, &c^t = 1 \\
    z^{t}, &c^t = 0
  \end{cases}$
  \STATE $m_i^{t,z} = \cC_i^{D, z}(\nabla f_i(z^{t+1}) - h_i^t)$  \hfill{\scriptsize \color{gray} Worker $i$ compresses the shifted gradient via the compressor $\cC_i^{D, z} \in \mathbb{U}(\omega)$}
  \STATE $h_i^{t+1} = h_i^t + \beta m_i^{t,z}$ 
  \STATE {Send compressed message $m_i^{t,z}$ to the server}
  \ENDFOR
  {\green \STATE $v^{t+1} = (1 - \tau) v^{t} + \tau \left(h^t + \frac{1}{n} \sum_{i=1}^n m_i^{t,z}\right)$} \alglinelabel{line:bi_diana:v}
  \STATE $h^{t+1} = h^t + \beta \frac{1}{n} \sum_{i=1}^n m_i^{t,z}$ 
  \ENDFOR
  \end{algorithmic}
\end{algorithm}

\begin{algorithm}[ht]
  \caption{CalculateLearningRates}
  \label{alg:learning_rates}
  \footnotesize
  \begin{algorithmic}[1]
    \STATE \textbf{Parameters:} element $\Gamma_t$; parameter $\bar{L} > 0$; strong-convexity parameter $\mu \geq 0,$ probability $p,$ contraction parameter $\alpha$, momentum $\tau$, parameter $\beta$
    \STATE Find the largest root $\bar{\theta}_{t+1}$ of the quadratic equation $$p \bar{L} \Gamma_t \bar{\theta}^2_{t+1} + p (\bar{L} + \Gamma_t \mu) \bar{\theta}_{t+1} - (\bar{L} + \Gamma_t \mu) = 0$$ \alglinelabel{line:learning_rates:root}
    \STATE $\theta_{\min} = \frac{1}{4}\min\left\{1, \frac{\alpha}{p}, \frac{\tau}{p}, \frac{\beta}{p}\right\}$; \quad $\theta_{t+1} = \min\{\bar{\theta}_{t+1}, \theta_{\min}\}$; \quad$\gamma_{t+1} = \frac{p \theta_{t+1} \Gamma_t}{1 - p \theta_{t+1}}$; \quad$\Gamma_{t+1} = \Gamma_{t} + \gamma_{t+1}$
  \end{algorithmic}
\end{algorithm}

\section{New Method: 2Direction}

In order to provide an answer to \ref{word:problem}, at the beginning of our research journey we hoped that a rather straightforward approach might bear fruit. In particular, we considered the current state-the-art methods \algname{ADIANA} (Algorithm~\ref{alg:adiana})~\citep{ADIANA}, \algname{CANITA} \citep{li2021canita} and \algname{EF21-P + DIANA} (Algorithm~\ref{algorithm:diana_ef21_p}) \citep{gruntkowska2022ef21}, and tried to combine the \algname{EF21-P} compression mechanism on the server side with the \algname{ADIANA} (accelerated \algname{DIANA}) compression mechanism on the workers' side. In short, we were aiming to develop a ``\algname{EF21-P + ADIANA}'' method.
Note that while \algname{EF21-P + DIANA}  provides the current SOTA communication complexity among all methods supporting bidirectional compression, the method is not ``accelerated''. On the other hand, while \algname{ADIANA} (in the strongly convex regime) and \algname{CANITA}  (in the convex regime) are ``accelerated'', they support unidirectional (uplink) compression only.

In Sections~\ref{sec:original_adiana} and \ref{sec:original_ef21_p} we list the \algname{ADIANA} and \algname{EF21-P + DIANA} methods, respectively. One can see that in order to calculate $x^{t+1},$ $y^{t+1},$ and $z^{t+1}$ in \algname{ADIANA}  (Algorithm~\ref{alg:adiana}), it is sufficient for the server to broadcast the point $u^{t+1}$. At first sight, it seems  that we might be able to develop a ``\algname{EF21-P + ADIANA}'' method  by replacing Line~\ref{line:adiana:broadcast} in Algorithm~\ref{alg:adiana} with Lines~\ref{line:adiana:ef21_p_1}, \ref{line:adiana:ef21_p_2}, \ref{line:adiana:ef21_p_3}, and \ref{line:adiana:ef21_p_4} from Algorithm~\ref{algorithm:diana_ef21_p}. With these changes, we can try to calculate $x^{t+1}$ and $y^{t+1}$ using the  formulas
\begin{align}
  y^{t+1} &= \theta_{t+1} w^t + (1 - \theta_{t+1}) z^t, \label{eq:new_y}\\
 x^{t+1}  &= \theta_{t+1} w^{t+1} + (1 - \theta_{t+1}) z^t \label{eq:new_x}\\
 w^{t+1}  &= w^t + \cC^{P}\left(u^{t+1} - w^t\right) \label{eq:new_w},
\end{align}
instead of Lines~\ref{line:adiana:y} and \ref{line:adiana:x} in Algorithm~\ref{alg:adiana}. Unfortunately, all our attempts of making this work failed, and we now believe that this ``naive'' approach will not lead to a resolution of \ref{word:problem}. Let us briefly explain why we think so, and how we ultimately managed to resolve \ref{word:problem}.

$\blacklozenge$ The first issue arises from the fact that the point $x^{t+1},$ and, consequently, the point $z^{t+1}$, depend on $w^{t+1}$ instead of $u^{t+1},$ and thus the error from the {\em primal} (i.e., server) compressor $\cC^{P}$ affects them. In our proofs, we do not know how to prove a good convergence rate with \eqref{eq:new_x}. Therefore, we decided to use the original update (Line~\ref{line:adiana:x} from Algorithm~\ref{alg:adiana}) instead. We can do this almost for free because in Algorithm~\ref{alg:adiana} the point $x^{t+1}$ is only used in Line~\ref{line:adiana:z} (Algorithm~\ref{alg:adiana})  with small probability $p$. In the final version of our algorithm  \algname{2Direction} (see Algorithm~\ref{alg:bi_diana}), we broadcast a non-compressed messages $x^{t+1}$ with  probability $p.$ In Section~\ref{sec:sanity_check}, we show that  $p$ is so small that these non-compressed rare messages do not affect the total communication complexity of Algorithm~\ref{alg:adiana}. 

$\blacklozenge$ The second issue comes from the observation that we can not perform the same trick for point $y^{t+1}$ since it is required in each iteration of Algorithm~\ref{alg:adiana}. We tried to use \eqref{eq:new_y} and \eqref{eq:new_w}, but this still does not work. Deeper understanding of this can only be gained by a detailed examination our proof and the proofs of \citep{ADIANA,gruntkowska2022ef21}. One  way to explain the difficulty is to observe that in non-accelerated methods \citep{gorbunov2020unified,gruntkowska2022ef21}, the variance-reducing shifts $h^t$ converge to \emph{the fixed vector} $\nabla f(x^*),$ while in the accelerated methods \citep{ADIANA, li2021canita}, these shifts $h^t$ converge to \emph{the non-fixed vectors} $\nabla f(z^t)$ in the corresponding Lyapunov functions. Assume that $\mu = 0.$ Then, instead of the \algname{EF21-P} mechanism
\begin{eqnarray}
\begin{aligned}
  \label{eq:ef21_p}
  w^{t+1} &\squeeze = w^t + \cC^{P}\left(u^{t+1} - w^t\right) \overset{\textnormal{\tiny Line~\ref{line:adiana:u} in Alg.~\ref{alg:adiana}}}{=} w^t + \cC^{P}\left(u^{t} - \frac{\gamma_{t+1}}{\bar{L}} g^{t+1} - w^t\right) \\
  &\squeeze \hspace{-0.5cm}\overset{\nabla f(x^*) = 0}{=} w^t - \frac{\gamma_{t+1}}{\bar{L}} {\red \nabla f(x^*)} + \cC^{P}\left(u^{t} - \frac{\gamma_{t+1}}{\bar{L}} (g^{t+1} - {\red \nabla f(x^*)}) - w^t\right),
\end{aligned}
\end{eqnarray}
we propose to perform the step
\begin{align}
  w^{t+1} &\squeeze= w^t - \frac{\gamma_{t+1}}{\bar{L}} {\green \nabla f(z^t)} + \cC^{P}\left(u^{t} - \frac{\gamma_{t+1}}{\bar{L}} (g^{t+1} - {\green \nabla f(z^t)}) - w^t\right)
  \label{eq:unitracktable_w} \\
  &\squeeze = q^{t+1} + \cC^{P}\left(u^{t+1} - q^{t+1}\right) \nonumber, \textnormal{ where}\\
   u^{t+1} &\squeeze= \argmin \limits_{x \in \R^d} \inp{g^{t+1}}{x} + \frac{\bar{L}}{2 \gamma_{t+1}} \norm{x - u^t}^2, \; q^{t+1}   = \argmin \limits_{x \in \R^d} \inp{\nabla f(z^t)}{x} + \frac{\bar{L}}{2 \gamma_{t+1}} \norm{x - w^t}^2. \nonumber
\end{align}
Unlike \eqref{eq:ef21_p},  step \eqref{eq:unitracktable_w} resolves all our previous problems, and we were able to obtain new SOTA  rates.

$\blacklozenge$ However, step \eqref{eq:unitracktable_w} is not implementable since the server and the nodes need to know the vector $\nabla f(z^t).$ The last crucial observation is the same as with the points $x^t$ and $z^t$: the vector $\nabla f(z^t)$ changes with probability $p$ since the point $z^t$ changes with probability $p.$ Intuitively, this means that it easier to communicate $\nabla f(z^t)$ between the server and the workers. We do this using two auxiliary control vectors, $v^t$ and $k^t.$ The former ``learns'' the value of $\nabla f(z^t)$ in Line~\ref{line:bi_diana:v} (Algorithm~\ref{alg:bi_diana}), and the latter is used in Line~\ref{line:bi_diana:w} (Algorithm~\ref{alg:bi_diana}) instead of $\nabla f(z^t).$ Then, when the algorithm updates $z^{t+1},$ it also updates $k^t$ in Line~\ref{line:bi_diana:k} (Algorithm~\ref{alg:bi_diana}) and the updated non-compressed vector $k^t$ is broadcast to the workers.

The described changes are highlighted in Lines~\ref{line:bi_diana:y}, \ref{line:bi_diana:q}, \ref{line:bi_diana:p}, \ref{line:bi_diana:w}, \ref{line:bi_diana:k}, \ref{line:bi_diana:broadcast}, \ref{line:bi_diana:q_local}, \ref{line:bi_diana:w_local} and \ref{line:bi_diana:v} of our new Algorithm~\ref{alg:bi_diana} in {\green green} color. Other steps of the algorithm correspond to the original \algname{ADIANA} method (Algorithm~\ref{alg:adiana}). Remarkably, all these new steps are only required to substitute a single Line~\ref{line:adiana:broadcast} of Algorithm~\ref{alg:adiana}!

\section{Theoretical Communication Complexity of 2Direction}

Having outlined our thought process when developing  \algname{2Direction} (Algorithm~\ref{alg:bi_diana}), we are now ready to present our theoretical iteration and communication complexity results. Note that \algname{2Direction} depends on two hyper-parameters, probability $p$ (used in Lines~4 and 17) and momentum $\tau$ (used in Lines~4 and 30).
Further, while \citet{ADIANA, li2021canita} assume a strong relationship between  $L$ and $L_{\max}$ ($L=L_{\max}$),  \citet{gruntkowska2022ef21} differentiate between $L$ and $L_{\max}$, and thus perform a more general and analysis of their method. In order to perform a fair comparison to the above results, we have decided to minimize the {\em total communication complexity}   $\mathfrak{m}^{r}_{{\scriptscriptstyle \cM}}$ as a function of the hyper-parameters $p$ and $\tau$, depending on whether the ratio  $\nicefrac{L_{\max}}{L}$ is known or not.

Defining $R^2 \eqdef \norm{x^0 - x^*}^2$, Theorems~\ref{theorem:main_theorem_strongly} and \ref{theorem:main_theorem_general_convex}  state that \algname{2Direction} converges after
\begin{eqnarray}
\begin{aligned}
  \label{eq:strong_complexity}
 \squeeze T \eqdef \widetilde{\Theta}\Bigg(\max\Bigg\{&\squeeze\sqrt{\frac{L}{\alpha p \mu}},\sqrt{\frac{L}{\alpha \tau \mu}},\sqrt{\frac{\sqrt{L L_{\max}} (\omega + 1) \sqrt{\omega \tau}}{\alpha \sqrt{n} \mu}},\sqrt{\frac{\sqrt{L L_{\max}} \sqrt{\omega + 1} \sqrt{\omega \tau}}{\alpha \sqrt{p} \sqrt{n} \mu}}, \\
  &\squeeze\sqrt{\frac{L_{\max} \omega (\omega + 1)^2 p}{n \mu}}, \sqrt{\frac{L_{\max} \omega}{n p \mu}}, \frac{1}{\alpha}, \frac{1}{\tau}, (\omega + 1), \frac{1}{p}\Bigg\}\Bigg), \text{or}
\end{aligned}
\end{eqnarray}
\begin{eqnarray}
\begin{aligned}
  \label{eq:general_convex}
\squeeze  T_{\textnormal{gc}} \eqdef \Theta\Bigg(\max\Bigg\{&\squeeze\sqrt{\frac{L R^2}{\alpha p \varepsilon}},\sqrt{\frac{L R^2}{\alpha \tau \varepsilon}},\sqrt{\frac{\sqrt{L L_{\max}} (\omega + 1) \sqrt{\omega \tau} R^2}{\alpha \sqrt{n} \varepsilon}},\sqrt{\frac{\sqrt{L L_{\max}} \sqrt{\omega + 1} \sqrt{\omega \tau} R^2}{\alpha \sqrt{p} \sqrt{n} \varepsilon}}, \\
  &\squeeze\sqrt{\frac{L_{\max} \omega (\omega + 1)^2 p R^2}{n \varepsilon}}, \sqrt{\frac{L_{\max} \omega R^2}{n p \varepsilon}}\Bigg\}\Bigg) + \widetilde{\Theta}\Bigg(\max\Bigg\{\frac{1}{\alpha}, \frac{1}{\tau}, (\omega + 1), \frac{1}{p}\Bigg\}\Bigg)
\end{aligned}
\end{eqnarray}
iterations, in the $\mu$--strongly convex and general convex regimes, respectively. These complexities depend on two hyper-parameters: $p \in (0, 1]$ and $\tau \in (0, 1].$ For simplicity, in what follows we consider the $\mu$--strongly convex case only\footnote{One can always take $\mu = \nicefrac{\varepsilon}{R^2}$ to understand the dependencies in the general convex case.}.
While the iteration complexities \eqref{eq:strong_complexity} and \eqref{eq:general_convex} are clearly important, in the context of our paper, optimizing communication complexity \eqref{eq:main_complexity} is more important. In the following simple theorem, we give expressions for the communication complexities of \algname{2Direction}, taking into account both workers-to-server (w2s) and server-to-workers (s2w) communication.

\begin{theorem}
  \label{theorem:comm_complexity}
  Assume that $\cC_i^{D, \cdot}$ and $\cC^{P}$ have expected densities equal to $K_{\omega}$ and $K_{\alpha}$, respectively (see Definition~\ref{def:expected_density}). In view of Theorem~\ref{theorem:main_theorem_strongly}, in expectation, the w2s and s2w communication complexities are equal to \begin{align}
    \label{eq:abstract_comm_complexity}
    \mathfrak{m}^{{\scriptscriptstyle \textnormal{w2s}}}_{{\scriptscriptstyle \textnormal{new}}} = \widetilde{\Theta}\left(K_{\omega} \times T + d\right) \quad\textnormal{ and } \quad \mathfrak{m}^{{\scriptscriptstyle \textnormal{s2w}}}_{{\scriptscriptstyle \textnormal{new}}} = \widetilde{\Theta}\left(\left(K_{\alpha} + p d\right) \times T + d\right).
  \end{align}
\end{theorem}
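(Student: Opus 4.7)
The plan is a direct accounting of the bits sent in each direction during a single iteration of Algorithm~\ref{alg:bi_diana}, followed by summation over the $T$ iterations and the addition of the one-time initialization cost.

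First I would inspect the algorithm line by line in each direction. For the worker-to-server direction, the only transmissions from worker $i$ to the server occur on Lines~8 and 28, where the compressed shifted gradients $m_i^{t,y}=\cC_i^{D,y}(\nabla f_i(y^{t+1})-h_i^t)$ and $m_i^{t,z}=\cC_i^{D,z}(\nabla f_i(z^{t+1})-h_i^t)$ are sent. By Assumption~\ref{ass:unbiased_compressors} and Definition~\ref{def:expected_density}, each transmission uses an expected number of floats bounded by $K_\omega$, so a single iteration costs $2K_\omega=\Theta(K_\omega)$ floats per worker in expectation. For the server-to-worker direction, the per-iteration transmissions are the compressed primal message $p^{t+1}=\cC^P(u^{t+1}-q^{t+1})$ on Line~16 (expected density $K_\alpha$) and, only when the coin $c^t$ lands heads (probability $p$), the full-precision broadcast of $x^{t+1}$ and $k^{t+1}$ on Line~20 (each of size $d$). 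By linearity of expectation the per-iteration s2w cost is $\Theta(K_\alpha+pd)$ floats.

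Next I would account for one-time costs. At $t=0$ the server must establish the common state $w^0=z^0=u^0=x^0$, and the initial shifts $h_i^0,k^0,v^0$ must be agreed upon; each of these requires $O(d)$ floats to communicate, independently of $T$. No other communication occurs inside the loop that has not been counted above.

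Then I would combine the pieces. Since $T$ from \eqref{eq:strong_complexity} is deterministic (a function of the problem parameters), linearity of expectation gives
\[
\mathfrak{m}^{{\scriptscriptstyle \textnormal{w2s}}}_{{\scriptscriptstyle \textnormal{new}}}
\;=\;T\cdot\Theta(K_\omega)+\Theta(d),
\qquad
\mathfrak{m}^{{\scriptscriptstyle \textnormal{s2w}}}_{{\scriptscriptstyle \textnormal{new}}}
\;=\;T\cdot\Theta(K_\alpha+pd)+\Theta(d).
\]
Because $T$ itself carries a logarithmic factor $\widetilde{\Theta}(\cdot)$ from Theorem~\ref{theorem:main_theorem_strongly}, the total complexities absorb these logarithms, yielding the claimed $\widetilde{\Theta}(K_\omega T+d)$ and $\widetilde{\Theta}((K_\alpha+pd)T+d)$ expressions in \eqref{eq:abstract_comm_complexity}.

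The only step that requires any care is the reading of the algorithm itself: one must check that Lines~22--27 do \emph{not} incur any additional s2w traffic, because they are the workers' local replication of the server-side updates on Lines~12--14 using the already-received quantities $p^{t+1}$, and (when $c^t=1$) $x^{t+1},k^{t+1}$; this is what justifies dropping these lines from the communication tally. Once this is observed, the proof is a one-line summation and the main effort is really notational.
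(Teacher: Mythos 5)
Your proposal is correct and follows essentially the same argument as the paper's proof: count the compressed uplink messages ($\cC_i^{D,y}$, $\cC_i^{D,z}$) per iteration for w2s, the compressed message $p^{t+1}$ plus the probability-$p$ uncompressed broadcasts of $x^{t+1}$ and $k^{t+1}$ for s2w, and add the $\Theta(d)$ initialization cost, then multiply by $T$. The extra detail you give (checking that the workers' local recomputation lines incur no communication, and invoking linearity of expectation with deterministic $T$) only makes explicit what the paper states in one line.
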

\begin{proof}
  The first complexity in \eqref{eq:abstract_comm_complexity} follows because w2s communication involves $\cC_i^{D, y}(\cdot)$ and $\cC_i^{D, z}(\cdot)$ only. The second complexity in \eqref{eq:abstract_comm_complexity} follows because s2w communication involves $\cC_i^{P}(\cdot)$, plus two non-compressed vectors $x^{t+1}$ and $k^{t+1}$ with the probability $p.$ The term $d$ comes from the fact that non-compressed vectors are communicated in the initialization phase.
\end{proof}

\subsection{The ratio $\nicefrac{L_{\max}}{L}$ is not known}

In the following theorem, we consider the regime when the exact value of $\nicefrac{L_{\max}}{L}$ is not known. Hence, we seek to find $p$ and $\tau$ that minimize the worst case $\mathfrak{m}^{r}_{{\scriptscriptstyle \textnormal{new}}}$  (see \eqref{eq:main_complexity}) w.r.t.\ $L_{\max} \in [L, n L].$
\begin{restatable}{theorem}{COROLLARYMAINREALISTIC}
  \label{cor:realistic}
Choose  $r \in [0, 1]$ and let $\mu^r_{\omega, \alpha} \eqdef \frac{rd}{(1 - r) K_{\omega} + r K_{\alpha}}.$ In view of Theorem~\ref{theorem:comm_complexity},
the values $
      p = \min\left\{\frac{1}{\omega + 1}, \frac{1}{\mu^r_{\omega, \alpha}}\right\}$
      and
      $\tau = \frac{p^{1/3}}{(\omega + 1)^{2/3}}$
    minimize $\max\limits_{L_{\max} \in [L, n L]} \mathfrak{m}^{r}_{{\scriptscriptstyle \textnormal{new}}}.$ This choice leads to the following number of communication rounds:
    {\small \begin{align}
      \label{eq:realistic_compl}
 \squeeze     T^{\textnormal{realistic}} &\squeeze \eqdef \widetilde{\Theta}\Bigg(\max\Bigg\{\sqrt{\frac{L \max\{\omega + 1, \mu^r_{\omega, \alpha}\}}{\alpha \mu}},\sqrt{\frac{L_{\max} \omega \max\{\omega + 1, \mu^r_{\omega, \alpha}\}}{n \mu}}, \frac{1}{\alpha}, (\omega + 1), \mu^r_{\omega, \alpha}\Bigg\}\Bigg).
    \end{align}}The total communication complexity thus equals $\mathfrak{m}^{r}_{{\scriptscriptstyle \textnormal{realistic}}} = \widetilde{\Theta}\left(\left((1 - r)K_{\omega} + r K_{\alpha}\right) T_{{\scriptscriptstyle \textnormal{realistic}}} + d\right).$ 
\end{restatable}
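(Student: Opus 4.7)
My plan is to reduce the minimization to a tractable form and then verify that the proposed $(p,\tau)$ is optimal.

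Plugging Theorem~\ref{theorem:comm_complexity} into \eqref{eq:main_complexity} and regrouping the $rpd$ contribution, I obtain
\[
\mathfrak{m}^{r}_{{\scriptscriptstyle \textnormal{new}}} \;=\; \widetilde{\Theta}\Bigl(\bigl((1-r)K_\omega + rK_\alpha\bigr)\bigl(1+p\,\mu^r_{\omega,\alpha}\bigr)\,T \;+\; d\Bigr).
\]
Since every term in $T$ from \eqref{eq:strong_complexity} is non-decreasing in $L_{\max}$, the worst case over $L_{\max}\in[L,nL]$ is attained at $L_{\max}=nL$, and the minimization of the prefactor-times-$T$ portion reduces to a problem in $p$ and $\tau$ alone. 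Two natural upper bounds on $p$ arise: first, to keep $1+p\mu^r_{\omega,\alpha}=\Theta(1)$ one needs $p\le 1/\mu^r_{\omega,\alpha}$; second, the two $p$-sensitive terms $\sqrt{L_{\max}\omega(\omega+1)^2 p/(n\mu)}$ and $\sqrt{L_{\max}\omega/(np\mu)}$ balance at $p=1/(\omega+1)$, above which the first grows and below which the second cannot be improved past $\sqrt{L_{\max}\omega(\omega+1)/(n\mu)}$. Taking the minimum of these two upper bounds gives $p=1/\max\{\omega+1,\mu^r_{\omega,\alpha}\}$. Conditionally on this $p$, the optimal $\tau$ is found by balancing the $\tau$-sensitive terms in $T$ (namely $\sqrt{L/(\alpha\tau\mu)}$, the two $\sqrt{\sqrt{\omega\tau}}$ cross-terms, and $1/\tau$), which forces the exponent $\tau\propto p^{1/3}(\omega+1)^{-2/3}$.

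With these two values in hand, I would substitute them back into \eqref{eq:strong_complexity} and verify that each of the ten terms is $\widetilde{O}(T^{\textnormal{realistic}})$. The two ``extreme'' terms $\sqrt{L/(\alpha p\mu)}$ and $\sqrt{L_{\max}\omega/(np\mu)}$ collapse directly to the first two terms of \eqref{eq:realistic_compl} using $1/p=\max\{\omega+1,\mu^r_{\omega,\alpha}\}$, while the standalone $1/p$, $1/\tau$, $\omega+1$, $1/\alpha$ collapse to the last three (using $1/\tau=(\omega+1)^{2/3}\max\{\omega+1,\mu^r_{\omega,\alpha}\}^{1/3}\le\max\{\omega+1,\mu^r_{\omega,\alpha}\}$). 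The four intermediate terms (those mixing $\sqrt{LL_{\max}}$ or $\sqrt{\omega\tau}$) each reduce, via an AM-GM applied to the two extreme terms together with $\omega+1$ or $1/\alpha$, to quantities bounded by $T^{\textnormal{realistic}}$; the exponent $1/3$ in $\tau$ was chosen precisely so that these AM-GM bounds are tight at the optimum, which simultaneously confirms optimality in the min-max sense --- any perturbation of $p$ or $\tau$ either inflates $1+p\mu^r_{\omega,\alpha}$ or inflates one of the $p,\tau$-sensitive terms of $T$. The final expression $\mathfrak{m}^r_{\scriptscriptstyle \textnormal{realistic}}=\widetilde{\Theta}\bigl(((1-r)K_\omega+rK_\alpha)\,T^{\textnormal{realistic}}+d\bigr)$ is then immediate from the factored formula above.

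The main technical obstacle I anticipate is the bookkeeping in the intermediate-terms verification: the case analysis branches on whether $\omega+1$ or $\mu^r_{\omega,\alpha}$ dominates in $\max\{\omega+1,\mu^r_{\omega,\alpha}\}$, and on whether $\alpha(\omega+1)\ge 1$. Each case reduces to a short polynomial inequality in $p,\tau,\alpha,\omega,n$, which is routine given the clean algebraic form of the chosen parameters; the only conceptually delicate step is motivating the exponent $1/3$ in $\tau$, which I would justify a posteriori by exactly the balancing argument above.
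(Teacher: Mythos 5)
Your proposal is correct and takes essentially the same route as the paper: the same factorization $\mathfrak{m}^{r}_{{\scriptscriptstyle \textnormal{new}}}=\widetilde{\Theta}\left(\left((1-r)K_{\omega}+rK_{\alpha}\right)\left(1+p\,\mu^r_{\omega,\alpha}\right)T+d\right)$, the worst case $L_{\max}=nL$, the same optimizers $p=\min\{\nicefrac{1}{(\omega+1)},\nicefrac{1}{\mu^r_{\omega,\alpha}}\}$ and $\tau=\nicefrac{p^{1/3}}{(\omega+1)^{2/3}}$ (the paper merely optimizes $\tau$ first and then $p$, replacing your balancing heuristic by monotonicity arguments on the regions $p\le\nicefrac{1}{\mu^r_{\omega,\alpha}}$ and $p\le\nicefrac{1}{(\omega+1)}$), and the same substitute-and-dominate verification of \eqref{eq:realistic_compl}. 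One caveat on your treatment of the cross terms: a bare AM--GM against the two extreme terms leaves a residual factor $(p/\alpha)^{1/4}$ when $p>\alpha$, whereas the paper instead invokes Lemma~\ref{lemma:lipt_constants} ($L_{\max}\le nL$), under which, e.g., the third term of \eqref{eq:strong_complexity} is directly at most $\sqrt{L(\omega+1)^{7/6}p^{1/6}/(\alpha\mu)}\le\sqrt{L/(\alpha p\mu)}$, so you should fold that inequality into your case analysis.
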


\begin{remark}
  \label{rem:realistic}
  To simplify the rate \eqref{eq:realistic_compl} and understand the quantity $\mu^r_{\omega, \alpha}$, let  $\cC_i^{D, \cdot}$  be the Rand$K$ sparsifier\footnote{It is sufficient to assume that $\omega + 1 = \Theta\left(\nicefrac{d}{K_{\omega}}\right)$.} and consider the case when the s2w communication is not slower than the w2s communication, i.e., $r \leq \nicefrac{1}{2}.$ Then
    $
      T^{\textnormal{realistic}} = \widetilde{\Theta}\left(\max\left\{\sqrt{\frac{L (\omega + 1)}{\alpha \mu}},\sqrt{\frac{L_{\max} \omega (\omega + 1)}{n p \mu}}, \frac{1}{\alpha}, (\omega + 1)\right\}\right)$ and $\mu^r_{\omega, \alpha} \leq \omega + 1.
    $  Indeed, this follows from $r \leq \nicefrac{1}{2}$ and the fact that $\omega + 1 = \nicefrac{d}{K_{\omega}}$ for the Rand$K$ compressor:
  $\mu^r_{\omega, \alpha} \eqdef \frac{rd}{(1 - r) K_{\omega} + r K_{\alpha}} \leq \frac{r}{1 - r} \times \frac{d}{K_{\omega}} \leq \omega + 1.$
\end{remark}

\subsection{The ratio $\nicefrac{L_{\max}}{L}$ is known}
We now consider the case when we have information about the ratio $\nicefrac{L_{\max}}{L}.$

\begin{restatable}{theorem}{COROLLARYMAINSTRONG}
  \label{cor:optimistic}
Choose $r \in [0, 1],$ and let $\mu^r_{\omega, \alpha} \eqdef \frac{rd}{(1 - r) K_{\omega} + r K_{\alpha}}.$ In view of Theorem~\ref{theorem:comm_complexity}, the values $p$ and $\tau$ given by \eqref{eq:opt_p_know} and \eqref{eq:opt_tau_know}, respectively,
    minimize $\mathfrak{m}^{r}_{{\scriptscriptstyle \textnormal{new}}}$ from \eqref{eq:strong_complexity}. This choice leads to the following number of communication rounds:
    {\small 
    \begin{eqnarray}
    \begin{aligned}
      \label{eq:optimistic_compl}
      T^{\textnormal{optimistic}} = \widetilde{\Theta}\Bigg(\max\Bigg\{&\sqrt{\frac{L \max\{1, \mu^r_{\omega, \alpha}\}}{\alpha \mu}}, \sqrt{\frac{L^{2/3} L_{\max}^{1/3} (\omega + 1)}{\alpha n^{1/3} \mu}}, \sqrt{\frac{L^{1/2} L_{\max}^{1/2} (\omega + 1)^{3/2}}{\sqrt{\alpha n} \mu}}, \\
      &\sqrt{\frac{L_{\max} \omega \max\{\omega + 1, \mu^r_{\omega, \alpha}\}}{n \mu}}, \frac{1}{\alpha}, (\omega + 1), \mu^r_{\omega, \alpha}\Bigg\}\Bigg).
    \end{aligned}
    \end{eqnarray}}
    The total communication complexity thus equals $\squeeze\mathfrak{m}^{r}_{{\scriptscriptstyle \textnormal{optimistic}}} = \widetilde{\Theta}\left(\left((1 - r)K_{\omega} + r K_{\alpha}\right) T_{{\scriptscriptstyle \textnormal{optimistic}}} + d\right).$ 
\end{restatable}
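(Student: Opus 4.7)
The plan is to plug the iteration complexity \eqref{eq:strong_complexity} into the communication-cost formulas of Theorem~\ref{theorem:comm_complexity} and then perform a two-dimensional optimization over $(p,\tau) \in (0,1]^2$. Combining \eqref{eq:main_complexity} with \eqref{eq:abstract_comm_complexity} yields
$$\mathfrak{m}^{r}_{{\scriptscriptstyle \textnormal{new}}} \;=\; \widetilde{\Theta}\Big(\big((1-r)K_\omega + rK_\alpha + rpd\big)\,T(p,\tau) + d\Big),$$
which, using the identity $(1-r)K_\omega + rK_\alpha + rpd = \big((1-r)K_\omega + rK_\alpha\big)(1 + p\mu^r_{\omega,\alpha})$, factors as
$$\mathfrak{m}^{r}_{{\scriptscriptstyle \textnormal{new}}} \;=\; \widetilde{\Theta}\Big(\big((1-r)K_\omega + rK_\alpha\big)(1 + p\mu^r_{\omega,\alpha})\,T(p,\tau) + d\Big),$$
so the task reduces to minimizing the product $(1 + p\mu^r_{\omega,\alpha})\,T(p,\tau)$ up to the unavoidable additive $d$.

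Next, I would exploit the fact that $T(p,\tau)$ is a maximum of simple monomials in $p$ and $\tau$. In $p$, the only increasing term is $\sqrt{L_{\max}\omega(\omega+1)^2 p/(n\mu)}$, whereas $\sqrt{L/(\alpha p \mu)}$, $\sqrt{L_{\max}\omega/(np\mu)}$, $\sqrt{\sqrt{LL_{\max}}\sqrt{\omega+1}\sqrt{\omega\tau}/(\alpha\sqrt{pn}\,\mu)}$ and $1/p$ are decreasing. In $\tau$, the decreasing terms $\sqrt{L/(\alpha\tau\mu)}$ and $1/\tau$ must be balanced against the increasing terms $\sqrt{\sqrt{LL_{\max}}(\omega+1)\sqrt{\omega\tau}/(\alpha\sqrt{n}\,\mu)}$ and $\sqrt{\sqrt{LL_{\max}}\sqrt{\omega+1}\sqrt{\omega\tau}/(\alpha\sqrt{pn}\,\mu)}$. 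Standard balancing --- equating each opposing pair and taking the largest resulting candidate --- produces the optimal $p^\star, \tau^\star$, while the prefactor $(1 + p\mu^r_{\omega,\alpha})$ naturally splits the analysis into the ``cheap-broadcast'' regime $p \leq 1/\mu^r_{\omega,\alpha}$ (where the penalty is $\Theta(1)$) and the ``expensive-broadcast'' regime $p > 1/\mu^r_{\omega,\alpha}$ (where it is $\Theta(p\mu^r_{\omega,\alpha})$). In each regime the minimizer becomes an explicit function of $L, L_{\max}, \omega, \alpha, n, \mu, r, d, K_\omega, K_\alpha$ --- these will be the closed-form expressions \eqref{eq:opt_p_know} and \eqref{eq:opt_tau_know}. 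Substituting $p^\star, \tau^\star$ back into $T$ produces \eqref{eq:optimistic_compl}, and multiplying by $(1-r)K_\omega + rK_\alpha$ gives the claimed total communication complexity.

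The main obstacle is the coupled cross-term $\sqrt{\sqrt{LL_{\max}}\sqrt{\omega+1}\sqrt{\omega\tau}/(\alpha\sqrt{pn}\,\mu)}$, which involves both variables simultaneously and thereby prevents a clean separable optimization: it participates in both the $p$- and the $\tau$-balance equations and creates a combinatorial number of sub-cases depending on which monomial of the max is active. This is precisely where knowledge of $\nicefrac{L_{\max}}{L}$ enters the construction --- the relative sizes of the $L$- and the $L_{\max}$-terms dictate the active pair, and the optimal $p^\star, \tau^\star$ therefore depend non-trivially on this ratio. Without it, one is forced to take a worst case over $L_{\max} \in [L, nL]$, giving the weaker Theorem~\ref{cor:realistic}; with it, the sharper rate \eqref{eq:optimistic_compl} is attainable. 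Apart from this case analysis, the remaining work is routine substitution and algebraic simplification, followed by the Theorem~\ref{theorem:comm_complexity} conversion to total communication complexity.
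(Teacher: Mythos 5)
Your high-level strategy coincides with the paper's: reduce the objective via \eqref{eq:main_complexity} and \eqref{eq:abstract_comm_complexity} to $\big((1-r)K_\omega + rK_\alpha + rpd\big)T(p,\tau)+d$, observe that the broadcast penalty $rpd$ only matters once $p$ exceeds $1/\mu^r_{\omega,\alpha}$ (the paper phrases this as: for $p\ge A/B$ the cost is non-decreasing in $p$, hence one may restrict to $p\le A/B$ where it is $\Theta(A\,T)$), minimize over $\tau$ first using the known ratio $L_{\max}/L$, and then over $p$. So the route is the same one the paper takes.

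However, as a proof there is a genuine gap: the entire content of the theorem is the explicit formulas \eqref{eq:opt_tau_know}, \eqref{eq:opt_p_know} and the resulting rate \eqref{eq:optimistic_compl}, and your argument stops exactly where that work begins ("standard balancing \dots produces the optimal $p^\star,\tau^\star$"). The balancing heuristic you invoke --- equate each opposing pair of monomials and take the largest candidate --- is not a valid principle for minimizing a maximum of coupled monomials in two variables; in general the minimizer of $\max_i f_i$ sits at the crossing of the upper envelopes of the increasing and decreasing families, which need not be the largest pairwise crossing, and here the coupling you yourself flag (the term $\sqrt{\sqrt{LL_{\max}}\sqrt{\omega+1}\sqrt{\omega\tau}/(\alpha\sqrt{pn}\,\mu)}$, plus the fact that substituting $\tau^\star(p)$ creates new $p$-dependence such as $p^{-1/3}$ terms) is precisely what defeats a naive pairwise balance. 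The paper resolves this not by balancing but by successive restriction of the feasible set with monotonicity arguments: first to $p\le A/B=1/\mu^r_{\omega,\alpha}$, then (using Lemma~\ref{lemma:lipt_constants}) to $p\le (Ln/L_{\max})^{1/3}/(\omega+1)$, on which region several terms of $T'$ are shown to be dominated and can be dropped, after which the remaining one-dimensional problem in $p$ is solved explicitly and capped by the constraints, yielding \eqref{eq:opt_p_know}; only then is \eqref{eq:optimistic_compl} obtained by substitution, and the final bound $\mathfrak{m}^{r}_{\scriptscriptstyle\textnormal{optimistic}}=\widetilde{\Theta}\big(((1-r)K_\omega+rK_\alpha)T_{\scriptscriptstyle\textnormal{optimistic}}+d\big)$ follows from $p\le A/B$ exactly as in Theorem~\ref{cor:realistic}. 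To complete your proposal you would need to carry out this case analysis (or an equivalent one) and verify term by term that the substitution reproduces \eqref{eq:optimistic_compl}; as written, the derivation of the minimizers and of the final rate is asserted rather than proved.
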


Note that information about $\nicefrac{L_{\max}}{L}$ leads to a  better rate that in Theorem~\ref{cor:realistic}.

\begin{remark}
  \label{rem:optimistic}
  To simplify the rate \eqref{eq:optimistic_compl}, let $\cC_i^{D, \cdot}$ be the Rand$K$ sparsifier\footnote{It is sufficient to assume that $\omega + 1 = \Theta\left(\nicefrac{d}{K_{\omega}}\right)$.} and consider the case when the s2w communication is not slower than the w2s communication, i.e., $r \leq \nicefrac{1}{2}.$ Then
  {\small$T^{\textnormal{optimistic}} = \widetilde{\Theta}\left(\max\left\{\sqrt{\frac{L \max\{1, r \left(\omega + 1\right)\}}{\alpha \mu}}, \sqrt{\frac{L^{2/3} L_{\max}^{1/3} (\omega + 1)}{\alpha n^{1/3} \mu}}, \sqrt{\frac{L^{1/2} L_{\max}^{1/2} (\omega + 1)^{3/2}}{\sqrt{\alpha n} \mu}}, \sqrt{\frac{L_{\max} \omega (\omega + 1)}{n \mu}}, \frac{1}{\alpha}, (\omega + 1)\right\}\right).$}
Indeed, this follows from $r \leq \nicefrac{1}{2}$ and the fact that $\omega + 1 = \nicefrac{d}{K_{\omega}}$ for the Rand$K$ compressor:
    $\mu^r_{\omega, \alpha} \eqdef \frac{rd}{(1 - r) K_{\omega} + r K_{\alpha}} \leq \frac{r}{1 - r} \times \frac{d}{K_{\omega}} \leq 2 r \left(\omega + 1\right)$.
    
\end{remark}

\section{Theoretical Comparison with Previous  State of the Art}
\label{sec:sanity_check}
We now show that the communication complexity of \algname{2Direction} is always {\em no worse}  than that of \algname{EF21 + DIANA} and \algname{AGD}. Crucially, in some regimes, it can be substantially better. Furthermore, we show that if the s2w communication cost is zero (i.e., if $r=0$), the \algname{2Direction} obtains the same communication complexity as  \algname{ADIANA} \citep{ADIANA} (see Section~\ref{sec:comp_adiana}).

{\bf Comparison with \algname{EF21 + DIANA}.}
The \algname{EF21-P + DIANA} method has the communication complexities that equal $$\mathfrak{m}^{{\scriptscriptstyle \textnormal{w2s}}}_{{\scriptscriptstyle \textnormal{EF21-P + DIANA}}} = \widetilde{\Theta}\left(K_{\omega} \times T^{\scriptscriptstyle \textnormal{EF21-P + DIANA}} + d\right) \;\; \textnormal{ and } \;\; \mathfrak{m}^{{\scriptscriptstyle \textnormal{s2w}}}_{{\scriptscriptstyle \textnormal{EF21-P + DIANA}}} = \widetilde{\Theta}\left(K_{\alpha} \times T^{\scriptscriptstyle \textnormal{EF21-P + DIANA}} + d\right),$$
where $K_{\omega}$ and $K_{\alpha}$ are the expected densities of $\cC_i^{D}$ and $\cC^{P}$ in Algorithm~\ref{algorithm:diana_ef21_p}. The last term $d$ comes from the fact that \algname{EF21-P + DIANA} sends non-compressed vectors in the initialization phase. Let us define $K_{\omega, \alpha}^r~\eqdef~(1 - r)K_{\omega} + r K_{\alpha}.$ Therefore, the total communication complexity equals
 \begin{align}
  \label{eq:ef21_p_diana}
  \squeeze
  \mathfrak{m}^{r}_{{\scriptscriptstyle \textnormal{EF21-P + DIANA}}} = \widetilde{\Theta}\left(K_{\omega, \alpha}^r \left(\frac{L}{\alpha \mu} + \frac{\omega L_{\max}}{n \mu} + \omega\right) + d\right).
\end{align}
Theorem~\ref{cor:realistic} ensures that the total communication complexity of \algname{2Direction} is
{\small \begin{align}
  \label{eq:acc_diana_total}
  \mathfrak{m}^{r}_{{\scriptscriptstyle \textnormal{realistic}}} = \widetilde{\Theta}\Bigg(K_{\omega, \alpha}^r \Bigg(\sqrt{\frac{L \max\{\omega + 1, \mu^r_{\omega, \alpha}\}}{\alpha \mu}} + \sqrt{\frac{L_{\max} \omega \max\{\omega + 1, \mu^r_{\omega, \alpha}\}}{n \mu}} + \frac{1}{\alpha} + \omega + \mu^r_{\omega, \alpha}\Bigg) + d\Bigg).
\end{align}}

 One can see that \eqref{eq:acc_diana_total} is an accelerated rate; it has much better dependence on the condition numbers $\nicefrac{L}{\mu}$ and $\nicefrac{L_{\max}}{\mu}.$ In Section~\ref{sec:compare_ef21}, we prove the following simple theorem, which means  that \algname{2Direction} is not worse than \algname{EF21-P + DIANA}.
\begin{restatable}{theorem}{THEOREMBETTEREF}
  \label{theorem:better_ef21}
  For all $r \in [0, 1],$ $\mathfrak{m}^{r}_{{\scriptscriptstyle \textnormal{realistic}}} = \widetilde{\cO}\left(\mathfrak{m}^{r}_{{\scriptscriptstyle \textnormal{EF21-P + DIANA}}}\right).$
\end{restatable}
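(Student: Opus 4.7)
The plan is to show $\mathfrak{m}^{r}_{{\scriptscriptstyle \textnormal{realistic}}}=\widetilde{\cO}(\mathfrak{m}^{r}_{{\scriptscriptstyle \textnormal{EF21-P + DIANA}}})$ by bounding, up to constants, every term of~\eqref{eq:acc_diana_total} by a term already appearing in~\eqref{eq:ef21_p_diana} or by the additive $+d$ tail. After pulling out the common multiplier $K^r_{\omega,\alpha}\eqdef (1-r)K_\omega + r K_\alpha$, the only quantity inside~\eqref{eq:acc_diana_total} that is not already a term of~\eqref{eq:ef21_p_diana} is $\mu^r_{\omega,\alpha}$; the whole argument hinges on the elementary identity
\begin{align*}
  K^r_{\omega,\alpha}\cdot\mu^r_{\omega,\alpha} \;=\; r d \;\leq\; d,
\end{align*}
which allows $\mu^r_{\omega,\alpha}$ to be absorbed into the $+d$ tail whenever it appears.

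First, I would split $\max\{\omega+1,\mu^r_{\omega,\alpha}\}\leq(\omega+1)+\mu^r_{\omega,\alpha}$ inside the two square roots of~\eqref{eq:acc_diana_total}. For the $(\omega+1)$ parts, the AM-GM inequality $\sqrt{ab}\leq (a+b)/2$ gives
\begin{align*}
  K^r_{\omega,\alpha}\sqrt{\tfrac{L(\omega+1)}{\alpha\mu}} \leq \tfrac{K^r_{\omega,\alpha}}{2}\left(\tfrac{L}{\alpha\mu}+\omega+1\right),\quad K^r_{\omega,\alpha}\sqrt{\tfrac{L_{\max}\omega(\omega+1)}{n\mu}} \leq \tfrac{K^r_{\omega,\alpha}}{2}\left(\tfrac{L_{\max}\omega}{n\mu}+\omega+1\right),
\end{align*}
and every resulting summand is already a term of~\eqref{eq:ef21_p_diana}. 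For the $\mu^r_{\omega,\alpha}$ parts, I would use the identity above to trade one factor of $K^r_{\omega,\alpha}\mu^r_{\omega,\alpha}$ for $d$, then apply AM-GM once more:
\begin{align*}
  K^r_{\omega,\alpha}\sqrt{\tfrac{L\mu^r_{\omega,\alpha}}{\alpha\mu}} \;=\; \sqrt{K^r_{\omega,\alpha}\cdot (K^r_{\omega,\alpha}\mu^r_{\omega,\alpha})\cdot \tfrac{L}{\alpha\mu}} \;\leq\; \sqrt{K^r_{\omega,\alpha}\cdot d\cdot \tfrac{L}{\alpha\mu}} \;\leq\; \tfrac{1}{2}\left(K^r_{\omega,\alpha}\tfrac{L}{\alpha\mu}+d\right),
\end{align*}
and analogously with $L_{\max}\omega/(n\mu)$ in place of $L/(\alpha\mu)$ for the second square root.

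The remaining additive terms are immediate: $K^r_{\omega,\alpha}/\alpha\leq K^r_{\omega,\alpha}\cdot L/(\alpha\mu)$ since $\mu\leq L$; $K^r_{\omega,\alpha}\cdot\omega$ is literally a term of~\eqref{eq:ef21_p_diana}; and $K^r_{\omega,\alpha}\cdot\mu^r_{\omega,\alpha}=rd\leq d$. Collecting the pieces and absorbing constants into the $\widetilde{\cO}$ notation reassembles~\eqref{eq:ef21_p_diana}, which finishes the proof. The main subtlety---really the only place where a non-trivial manipulation is needed---is handling $\mu^r_{\omega,\alpha}$ uniformly over $r\in[0,1]$: when $r$ is close to $1$ and $K_\alpha$ is small, $\mu^r_{\omega,\alpha}$ may far exceed $\omega+1$, so the naive bound $\mu^r_{\omega,\alpha}\leq\omega+1$ (valid only for $r\leq 1/2$, as in Remark~\ref{rem:realistic}) is unavailable, and it is precisely the AM-GM trade-off against $d$ afforded by $K^r_{\omega,\alpha}\mu^r_{\omega,\alpha}=rd$ that rescues the argument.
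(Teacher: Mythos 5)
Your proposal is correct and takes essentially the same route as the paper's own proof: the same split of $\max\{\omega+1,\mu^r_{\omega,\alpha}\}$ into its two branches, the same AM--GM bounds on the $(\omega+1)$ square roots together with $\mu\leq L$ for the $\nicefrac{1}{\alpha}$ term, and the same key identity $K^r_{\omega,\alpha}\,\mu^r_{\omega,\alpha}=rd\leq d$ followed by AM--GM to absorb the $\mu^r_{\omega,\alpha}$ terms into $K^r_{\omega,\alpha}\bigl(\nicefrac{L}{\alpha\mu}+\nicefrac{L_{\max}\omega}{n\mu}\bigr)+d$. No gaps.
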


{\bf Comparison with \algname{AGD}.}
To compare the abstract complexity \eqref{eq:acc_diana_total} with the non-abstract complexity $\widetilde{\Theta}(d \sqrt{\nicefrac{L}{\mu}}),$ we take the Rand$K$ and Top$K$ compressors in Algorithm~\ref{alg:bi_diana}. 

\begin{restatable}{theorem}{THEOREMBETTERAGD}
  \label{theorem:better_agd}
  For all $r \in [0, 1]$ and for all $K \in [d],$ let us take the Rand$K$ and Top$K$ compressors with the parameters (expected densities) i) $K_{\omega} = K$ and $K_{\alpha} = \min\{\lceil\nicefrac{1 - r}{r} K\rceil, d\}$ for $r \in [0, \nicefrac{1}{2}],$ ii) $K_{\omega} = \min\{\lceil\nicefrac{r}{1 - r} K \rceil, d\}$ and $K_{\alpha} = K$ for $r \in (\nicefrac{1}{2}, 1].$  Then we have $\mathfrak{m}^{r}_{{\scriptscriptstyle \textnormal{realistic}}} = \widetilde{\cO}\left(\mathfrak{m}_{{\scriptscriptstyle \textnormal{AGD}}}\right).$
\end{restatable}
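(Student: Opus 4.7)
The plan is to substitute the Rand$K_\omega$/Top$K_\alpha$ identities $\omega+1=d/K_\omega$ and $\alpha=K_\alpha/d$ into \eqref{eq:acc_diana_total}, and verify that $K_{\omega,\alpha}^r\cdot T^{\textnormal{realistic}}$ is at most $\widetilde{\cO}(d\sqrt{L/\mu})$ once the additive $d$ is absorbed. Because the prescribed parameter choice is symmetric under the exchange $r\leftrightarrow 1-r$ together with the swap $K_\omega\leftrightarrow K_\alpha$, I will restrict attention to $r\in[0,\tfrac12]$; the case $r\in(\tfrac12,1]$ is handled by the mirror argument.

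Fix $r\in[0,\tfrac12]$ with $K_\omega=K$ and $K_\alpha=\min\{\lceil(1-r)K/r\rceil,d\}$. I will first record three elementary inequalities that drive the entire bound. First, $K_\alpha\ge K$: either $K_\alpha=\lceil(1-r)K/r\rceil\ge(1-r)K/r\ge K$ (using $r\le\tfrac12$), or $K_\alpha=d\ge K$. Second, $K_{\omega,\alpha}^r\le 2(1-r)K+1$: in the non-cap sub-case one has $rK_\alpha\le(1-r)K+r$, and in the cap sub-case the defining inequality $\lceil(1-r)K/r\rceil>d$ forces $rd<(1-r)K+r$. Third, $\mu^r_{\omega,\alpha}\le\omega+1=d/K$, since $K_{\omega,\alpha}^r\ge(1-r)K\ge rK$. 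Thanks to the third inequality, the $\max\{\omega+1,\mu^r_{\omega,\alpha}\}$ appearing inside \eqref{eq:acc_diana_total} collapses to $\omega+1$.

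Next I estimate each of the five summands of $K_{\omega,\alpha}^r\,T^{\textnormal{realistic}}$. Term~1 equals $dK_{\omega,\alpha}^r/\sqrt{K_\omega K_\alpha}\cdot\sqrt{L/\mu}$, and the theorem's choice enforces $(1-r)K_\omega\approx rK_\alpha$, so an AM--GM-type computation gives $K_{\omega,\alpha}^r\le 2\sqrt{(1-r)K_\omega\cdot rK_\alpha}+1\le 2\sqrt{K_\omega K_\alpha}+1$, and Term~1 is $\widetilde{\cO}(d\sqrt{L/\mu})$. Term~2 satisfies $K_{\omega,\alpha}^r\sqrt{L_{\max}\omega(\omega+1)/(n\mu)}\le K_{\omega,\alpha}^r(d/K)\sqrt{L_{\max}/(n\mu)}$, which is $\widetilde{\cO}(d\sqrt{L/\mu})$ upon using $\omega(\omega+1)\le(d/K)^2$, Lemma~\ref{lemma:lipt_constants}'s bound $L_{\max}\le nL$, and the second inequality above. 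Term~3 is $K_{\omega,\alpha}^r/\alpha=K_{\omega,\alpha}^r d/K_\alpha\le 2d$ by the first two inequalities. Term~4 is $K_{\omega,\alpha}^r\omega\le K_{\omega,\alpha}^r\, d/K\le 2d$. Term~5 is exactly $K_{\omega,\alpha}^r\mu^r_{\omega,\alpha}=rd\le d$. Finally the additive $d$ in \eqref{eq:acc_diana_total} is absorbed into $d\sqrt{L/\mu}$ since $L\ge\mu$. Summing gives $\mathfrak{m}^{r}_{{\scriptscriptstyle\textnormal{realistic}}}=\widetilde{\cO}(d\sqrt{L/\mu})=\widetilde{\cO}(\mathfrak{m}_{{\scriptscriptstyle\textnormal{AGD}}})$.

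The only step that requires more than pure bookkeeping is Term~1: one needs the ratio $K_{\omega,\alpha}^r/\sqrt{K_\omega K_\alpha}$ to stay a constant uniformly in $r$ and in both sub-cases. This is exactly where the theorem's balance $rK_\alpha\approx(1-r)K_\omega$ is essential, for without it an extra $\sqrt{\omega+1}$ factor would survive, pushing the bound strictly beyond the \algname{AGD} rate. The cap sub-case ($K_\alpha=d$, hence $\alpha=1$) is routine to handle separately, since then both Term~1 and Term~3 lose their worst-case factors, and the inequalities above still apply verbatim.
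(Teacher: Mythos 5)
Your treatment of $r\in[0,\nicefrac{1}{2}]$ is correct and is essentially the paper's own argument: substitute $\omega+1=\nicefrac{d}{K_\omega}$, $\alpha\ge\nicefrac{K_\alpha}{d}$ into \eqref{eq:acc_diana_total}, bound $K_{\omega,\alpha}^r$ and $\mu^r_{\omega,\alpha}$, estimate the five terms, and finish with $L_{\max}\le nL$ from Lemma~\ref{lemma:lipt_constants}. The gap is the reduction of $r\in(\nicefrac{1}{2},1]$ to this case ``by the mirror argument.'' Under $(r,K_\omega,K_\alpha)\mapsto(1-r,K_\alpha,K_\omega)$ the quantity $K^r_{\omega,\alpha}$ is invariant, but the complexity \eqref{eq:acc_diana_total} is not: the two compressors enter asymmetrically ($\omega+1=\nicefrac{d}{K_\omega}$ appears inside the $L_{\max}$ term and as the additive $\omega$, while $\alpha$ appears as $\nicefrac{1}{\alpha}$), and $\mu^r_{\omega,\alpha}=\nicefrac{rd}{K^r_{\omega,\alpha}}$ maps to $\nicefrac{(1-r)d}{K^r_{\omega,\alpha}}$. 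In particular your third inequality $\mu^r_{\omega,\alpha}\le\omega+1$, on which the collapse of $\max\{\omega+1,\mu^r_{\omega,\alpha}\}$ (and hence your Term~1 and Term~2 bounds) rests, is simply false in the mirrored case: take $r=0.9$, $K=10$, $d=1000$, so $K_\omega=90$, $K_\alpha=10$, $\omega+1=\nicefrac{1000}{90}\approx 11$, $K^r_{\omega,\alpha}=18$, and $\mu^r_{\omega,\alpha}=\nicefrac{900}{18}=50>\omega+1$. So the case $r\in(\nicefrac{1}{2},1]$ must be argued directly, as the paper does: there one uses $K^r_{\omega,\alpha}\le 3rK$, $\alpha\ge\nicefrac{K}{d}$, $\omega+1\le\max\{\nicefrac{(1-r)d}{rK},1\}$, and $\max\{\omega+1,\mu^r_{\omega,\alpha}\}\le\nicefrac{d}{K}$ (since $K^r_{\omega,\alpha}\ge rK$), after which the same bookkeeping plus $L_{\max}\le nL$ again gives $\widetilde{\cO}(d\sqrt{\nicefrac{L}{\mu}})$. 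The theorem is true, but not by mirroring your $r\le\nicefrac{1}{2}$ estimates.

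Two cosmetic points in the half you did prove: Term~1 is an upper bound rather than an equality, since Top$K_\alpha\in\mathbb{B}(\nicefrac{K_\alpha}{d})$ only guarantees $\alpha\ge\nicefrac{K_\alpha}{d}$; and the inequality $K^r_{\omega,\alpha}\le 2\sqrt{(1-r)K_\omega\, rK_\alpha}+1$ is not AM--GM (which goes the other way) but follows from the near-balance $rK_\alpha\le(1-r)K_\omega+r$, which your parameter choice does provide, so the conclusion stands.
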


Theorem~\ref{theorem:better_agd} states that the total communication complexity of \algname{2Direction} is not worse than that of \algname{AGD}. It can be strictly better in the regimes when $\alpha > \nicefrac{K}{d}$ (for Top$K$), $L_{\max} < n L,$ and $n > 1.$

\section{Proof Sketch}
After we settle on the final version of Algorithm~\ref{alg:bi_diana}, the proof technique is pretty standard at the beginning \citep{ADIANA,tyurin2022dasha,gruntkowska2022ef21}. We proved a descent lemma (Lemma~\ref{lemma:aux_first}) and lemmas that control the convergences of auxiliary sequences (Lemmas~\ref{lemma:aux_control}, \ref{lemma:bi_fast_diana_iterates}, \ref{lemma:bi_fast_diana_k}, \ref{lemma:bi_fast_diana_v}). Using these lemmas, we construct the Lyapunov function \eqref{eq:main_theorem} with the coefficients $\kappa,$ $\rho,$ $\lambda$ and $\nu_{t} \geq 0$ for all $t \geq 0.$

One of the main problems was to find appropriate $\bar{L},$ $\kappa,$ $\rho,$ $\lambda$ and $\nu_{t} \geq 0$ such that we get a convergence. In more details, to get a converge it is sufficient to find $\bar{L},$ $\kappa,$ $\rho,$ $\lambda$ and $\nu_{t} \geq 0$ such that \eqref{eq:parameters_task}, \eqref{eq:lemma:negative_parameters:bregman} and \eqref{eq:lemma:negative_parameters:norm} hold. Using the symbolic computation SymPy library \citep{sympy}, we found appropriate $\kappa,$ $\rho,$ $\lambda$ and $\nu_{t} \geq 0$ (\eqref{eq:rho_sol}, \eqref{eq:kappa_sol}, \eqref{eq:lambda_sol}, \eqref{eq:nu_sol}) such that the inequalities hold. But that is not all. To get a convergence, we also found the bounds on the parameter $\bar{L}$, which essentially describe the speed of the convergence. In raw form, using symbolic computations, we obtained a huge number of bounds on $\bar{L}$ (see Sections~\ref{sec:sym_comp_kapap_const}, \ref{sec:sym_comp_rho_const}, \ref{sec:sym_comp_bregman_const}, \ref{sec:sym_comp_dist_const}). It was clear that most of them are redundant and it was required to find the essential ones. After a close look at the bounds on $\bar{L}$, we found out that it is sufficient to require \eqref{eq:constr_lipts} to ensure that all inequalities from Sections~\ref{sec:sym_comp_kapap_const}, \ref{sec:sym_comp_rho_const}, \ref{sec:sym_comp_bregman_const} and \ref{sec:sym_comp_dist_const} hold (see Section~\ref{sec:sym_comp_check} for details).

\begin{figure}[H]
\centering
\begin{tikzpicture}[scale=0.8, transform shape] 
  \node [draw,
    minimum width=2cm,
    minimum height=1.2cm,
  ]  (maintheorem) {\makecell{Theorem~\ref{theorem:main_theorem} \\ {\footnotesize (Lyapunov Function} \\ {\footnotesize Convergence)}}};

  \node [draw,
    minimum width=3cm, 
    minimum height=1.2cm,
    above right= -0.6cm and 0.5cm of maintheorem
  ] (maintheoremstrongly) {\makecell{Theorem~\ref{theorem:main_theorem_strongly} \\ {\footnotesize ($\mu$--strongly convex)}}};

  \node [draw,
    minimum width=3cm, 
    minimum height=0.6cm,
    below right= -0.6cm and 0.5cm of maintheorem
  ] (maintheoremconvex) {\makecell{Theorem~\ref{theorem:main_theorem_general_convex} \\ {\footnotesize (general convex)}}};

  \node [draw,
    minimum width=2cm, 
    minimum height=0.6cm,
    below right= -0.6cm and 0.5cm of maintheoremstrongly
  ] (commcomplexity) {\makecell{Theorems~\ref{theorem:comm_complexity}, \ref{cor:realistic} \\ {\footnotesize(Find comm. compl.} \\ {\footnotesize and optimal $p$ and $\tau$)}}};

  \node [draw,
    minimum width=3cm, 
    minimum height=1.2cm,
    above right= -0.6cm and 0.5cm of commcomplexity
  ] (betterthanef) {\makecell{Theorem~\ref{theorem:better_ef21} \\ {\footnotesize (No worse than} \\ {\footnotesize \algnamesmall{EF21-P + DIANA})}}};

  \node [draw,
    minimum width=3cm, 
    minimum height=1.2cm,
    below right= -0.6cm and 0.5cm of commcomplexity
  ] (betterthanagd) {\makecell{Theorem~\ref{theorem:better_agd} \\ {\footnotesize (No worse than} {\footnotesize \algnamesmall{AGD})}}};
  
  \draw[-stealth] (maintheorem.east) -- (maintheoremstrongly.west) 
    node[midway,above]{};

  \draw[-stealth] (maintheorem.east) -- (maintheoremconvex.west) 
    node[midway,above]{};
  
  \draw[-stealth] (maintheoremconvex.east) -- (commcomplexity.west)
    node[midway,above]{};

  \draw[-stealth] (maintheoremstrongly.east) -- (commcomplexity.west)
  node[midway,above]{};

  \draw[-stealth] (commcomplexity.east) -- (betterthanef.west)
  node[midway,above]{};

  \draw[-stealth] (commcomplexity.east) -- (betterthanagd.west)
  node[midway,above]{};
\end{tikzpicture}
\caption{Roadmap to our resolution of \ref{word:problem}.}
\label{fig:roadmap}
\end{figure}
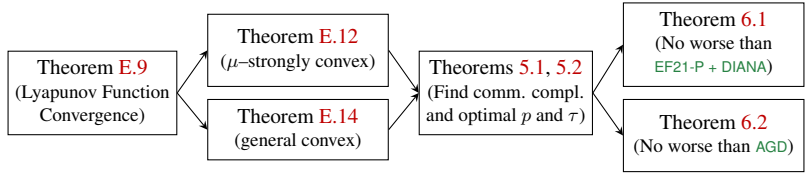

\section{Limitations and Future Work}
In contrast to Algorithm~\ref{algorithm:diana_ef21_p} (\algname{EF21+DIANA}), in which the server always broadcasts compressed vectors, in our Algorithm~\ref{alg:bi_diana} (\algname{2Direction})   the server needs to broadcast non-compressed vectors  with probability $p$ (see Line~\ref{line:bi_diana:broadcast}). While in Section~\ref{sec:sanity_check} we explain that this does not have an adverse effect on the theoretical communication complexity since $p$ is small, one may wonder whether it might be  possible to  achieve the same (or better) bounds as ours without having to resort to intermittent non-compressed broadcasts. This remains an open problem; possibly a challenging one. Another limitation comes from the fact that \algname{2Direction} requires more iterations than \algname{AGD} in general (this is the case of all methods that reduce communication complexity). While, indeed, \eqref{eq:strong_complexity} can be higher than $\widetilde{\Theta}(\sqrt{\nicefrac{L}{\mu}})$, the total communication complexity of \algname{2Direction} is not worse than that of \algname{AGD}.

\subsection*{Acknowledgements}
This work of P. Richt\'{a}rik and A. Tyurin was supported by the KAUST Baseline Research Scheme (KAUST BRF) and the KAUST Extreme Computing Research Center (KAUST ECRC), and the work of P. Richt\'{a}rik was supported by the SDAIA-KAUST Center of Excellence in Data Science and Artificial Intelligence (SDAIA-KAUST AI).

\bibliographystyle{apalike}
\bibliography{paper}

\clearpage
\appendix

\part*{Appendix}

\tableofcontents
\newpage

\section{Table of Notations}

\newcommand{\ditto}[1][.4pt]{\xrfill{#1}~''~\xrfill{#1}}
\begin{table}[h]
\centering
\begin{tabular}{cc}
\hline
Notation & Meaning\\
\hline
$g = \cO(f)$ & Exist $C > 0$ such that $g(z) \leq C \times f(z)$ for all $z \in \mathcal{Z}$\\
$g = \Omega(f)$ & Exist $C > 0$ such that $g(z) \geq C \times f(z)$ for all $z \in \mathcal{Z}$\\
$g = \Theta(f)$ & $g = \cO(f)$ and $g = \Omega(f)$ \\
$g = \widetilde{\cO}(f)$ & Exist $C > 0$ such that $g(z) \leq C \times f(z) \times \log (\textnormal{poly}(z))$ for all $z \in \mathcal{Z}$ \\
$g = \widetilde{\Omega}(f)$ & Exist $C > 0$ such that $g(z) \geq C \times f(z) \times \log (\textnormal{poly}(z))$ for all $z \in \mathcal{Z}$ \\
$g = \widetilde{\Theta}(f)$ & $g = \widetilde{\cO}(f)$ and $g = \widetilde{\Omega}(f)$ \\
$\{a, \dots, b\}$ & Set $\{i \in \mathbb{Z}\,|\, a \leq i \leq b\}$ \\
$[n]$ & $\{1, \dots, n\}$ \\
\hline
\end{tabular}
\end{table}

\section{The Original \algname{ADIANA} Algorithm}

\label{sec:original_adiana}

In this section, we present the \algname{ADIANA} algorithm from \citep{ADIANA}. In the following method, the notations, parameterization, and order of steps can be slightly different, but the general idea is the same.

\begin{algorithm}[ht]
  \caption{Accelerated \algname{DIANA} (\algname{ADIANA}) by \citet{ADIANA}}
  \label{alg:adiana}
  \footnotesize
  \begin{algorithmic}[1]
  \STATE \textbf{Parameters:} Lipschitz-like parameter $\bar{L} > 0$, strong-convexity parameter $\mu \geq 0,$ probability $p,$ parameter $\Gamma_0$, initial point $x^0 \in \R^d,$ initial gradient shifts $h^0_1, \dots , h^0_n\in \R^d$
  \STATE Initialize $\beta = \nicefrac{1}{\omega + 1}$ and $z^0 = u^0 = x^0$
  \STATE Initialize $h^0 = \frac{1}{n}\sum_{i = 1}^n h_i^0$
  \FOR{$t = 0, 1, \dots, T - 1$} 
  \STATE $\Gamma_{t+1}, \gamma_{t+1}, \theta_{t+1}= \textnormal{CalculateOriginalLearningRates}(\dots)$
  \FOR{$i = 1, \dots, n$ in parallel}
  \STATE $y^{t+1} = \theta_{t+1} {u^t} + (1 - \theta_{t+1}) z^t$ \alglinelabeladiana{line:adiana:y}
  \STATE $m_i^{t,y} = \cC_i^{D, y}(\nabla f_i(y^{t+1}) - h_i^t)$  \hfill{\scriptsize \color{gray} Worker $i$ compresses the shifted gradient via the compressor $\cC_i^{D, y} \in \mathbb{U}(\omega)$}
  \STATE {Send compressed message $m_i^{t,y}$ to the server}
  \ENDFOR
  \STATE $m^{t,y} = \frac{1}{n}\sum_{i = 1}^n m_i^{t,y}$ \hfill{\scriptsize \color{gray}Server averages the messages}
  \STATE $g^{t+1} = h^t + m^{t,y} \equiv \frac{1}{n}\sum_{i=1}^n \left(h_i^t + m_i^{t,y}\right)$ 
  \STATE $u^{t+1} = \argmin_{x \in \R^d} \inp{g^{t+1}}{x} + \frac{\bar{L} + \Gamma_{t} \mu}{2 \gamma_{t+1}} \norm{x - u^t}^2 + \frac{\mu}{2} \norm{x - y^{t+1}}^2$ \hfill{\scriptsize \color{gray} Server does a gradient-like descent step} \alglinelabeladiana{line:adiana:u}
  \STATE Flip a coin $c^t \sim \textnormal{Bernoulli}(p)$
  \STATE {\red Broadcast non-compressed messages $u^{t+1}$ to all $n$ workers} \alglinelabeladiana{line:adiana:broadcast}
  \FOR{$i = 1, \dots, n$ in parallel}
  \STATE $x^{t+1} = \theta_{t+1} u^{t+1} + (1 - \theta_{t+1}) z^t$ \alglinelabeladiana{line:adiana:x}
  \STATE $z^{t+1} = \begin{cases}
    x^{t+1}, &c^t = 1 \\
    z^{t}, &c^t = 0
  \end{cases}$ \alglinelabeladiana{line:adiana:z}
  \STATE $m_i^{t,z} = \cC_i^{D, z}(\nabla f_i(z^{t+1}) - h_i^t)$  \hfill{\scriptsize \color{gray} Worker $i$ compresses the shifted gradient via the compressor $\cC_i^{D, z} \in \mathbb{U}(\omega)$}
  \STATE $h_i^{t+1} = h_i^t + \beta m_i^{t,z}$ \hfill{\scriptsize \color{gray} Update the control variables}
  \STATE {Send compressed message $m_i^{t,z}$ to the server}
  \ENDFOR
  \STATE $h^{t+1} = h^t + \beta \frac{1}{n} \sum_{i=1}^n m_i^{t,z}$ \hfill{\scriptsize \color{gray}Server averages the messages}
  \ENDFOR
  \end{algorithmic}
\end{algorithm}

\newpage
\section{The Original \algname{EF21-P + DIANA} Algorithm}

\label{sec:original_ef21_p}

In this section, we present the \algname{EF21-P + DIANA} algorithm from \citep{gruntkowska2022ef21}. In the following method, the notations, parameterization, and order of steps can be slightly different, but the general idea is the same.

\begin{algorithm*}[h]
  \caption{\algname{EF21-P + DIANA} by \citet{gruntkowska2022ef21}}
  \footnotesize
  \begin{algorithmic}[1]
  \label{algorithm:diana_ef21_p}
  \STATE \textbf{Parameters:} learning rates $\gamma > 0$ and $\beta > 0,$ initial model $u^0 \in \R^d,$ initial gradient shifts $h^0_1, \dots , h^0_n\in \R^d,$ average of the initial gradient shifts $h^0 = \frac{1}{n}\sum_{i = 1}^n h_i^0,$ initial model shift $w^0 = u^0\in \R^d$
  \FOR{$t = 0, 1, \dots, T - 1$} 
  \FOR{$i = 1, \dots, n$ in parallel} 
  \STATE $m_i^t = \cC_i^{D}(\nabla f_i(w^t) - h_i^t)$  \hfill{\scriptsize \color{gray} Worker $i$ compresses the shifted gradient via the dual compressor $\cC_i^{D} \in \mathbb{U}(\omega)$}
  \STATE {Send compressed message $m_i^t$ to the server}
  \STATE $h^{t+1}_i = h^t_i + \beta m^t_i$ \hfill{\scriptsize \color{gray} Worker $i$ updates its local gradient shift with stepsize $\beta$}
  \ENDFOR
  \STATE $m^t = \frac{1}{n}\sum_{i = 1}^n m_i^t$ \hfill{\scriptsize \color{gray}Server averages the $n$ messages received from the workers}
  \STATE $h^{t+1} = h^t + \beta m^t$ \hfill{\scriptsize \color{gray} Server updates the average gradient shift so that $h^t = \frac{1}{n}\sum_{i = 1}^n h_i^t$}
  \STATE $g^{t} = h^t + m^t$ \hfill{\scriptsize \color{gray} Server computes the gradient estimator}
  \STATE $u^{t+1} = u^t - \gamma g^t$ \hfill{\scriptsize \color{gray} Server takes a gradient-type step with stepsize $\gamma$}
  \STATE {\green $p^{t+1} = \cC^{P}\left(u^{t+1} - w^t\right)$  \hfill {\scriptsize \color{gray} Server compresses the shifted model   via the primal compressor $\cC^{P} \in \mathbb{B}\left(\alpha\right)$}} \alglinelabelef{line:adiana:ef21_p_1}
  \STATE {\green $w^{t+1} = w^t + p^{t+1}$\hfill{\scriptsize \color{gray}Server updates the model shift}} \alglinelabelef{line:adiana:ef21_p_2}
  \STATE {\green Broadcast compressed message $p^{t+1}$ to all $n$ workers} \alglinelabelef{line:adiana:ef21_p_3}
  \FOR{$i = 1, \dots, n$ in parallel}
  \STATE {\green $w^{t+1} = w^{t} + p^{t+1}$ \hfill{\scriptsize \color{gray} Worker $i$ updates its local copy of the model shift}} \alglinelabelef{line:adiana:ef21_p_4}
  \ENDFOR
  \ENDFOR
  \end{algorithmic}
\end{algorithm*}

\section{Useful Identities and Inequalities}

For all $x,y,x_1,\ldots,x_n \in \R^d$, $s>0$ and $\alpha\in(0,1]$, we have:
\begin{align}
    \label{eq:young}
    \norm{x+y}^2 &\leq (1+s) \norm{x}^2 + (1+s^{-1}) \norm{y}^2, \\
    \label{eq:young_2}
    \norm{x+y}^2 &\leq 2 \norm{x}^2 + 2 \norm{y}^2, \\
    \label{eq:fenchel}
    \inp{x}{y} &\leq \frac{\norm{x}^2}{2s} + \frac{s\norm{y}^2}{2}, \\
    \label{eq:ineq1}
    \left( 1 - \alpha \right)\left( 1 + \frac{\alpha}{2} \right) &\leq 1 - \frac{\alpha}{2}, \\
    \label{eq:ineq2}
    \left( 1 - \alpha \right)\left( 1 + \frac{2}{\alpha} \right) &\leq \frac{2}{\alpha}, \\
    \label{eq:inp}
    \inp{a}{b} &= \frac{1}{2} \left( \norm{a}^2 + \norm{b}^2 - \norm{a-b}^2 \right).
\end{align}
\textbf{Variance decomposition:} For any random vector $X\in\R^d$ and any non-random $c\in\R^d$, we have
\begin{align}
\label{eq:vardecomp}
   \Exp{\norm{X-c}^2} = \Exp{\norm{X - \Exp{X}}^2} + \norm{\Exp{X}-c}^2.
\end{align}

\begin{lemma}[\cite{nesterov2018lectures}]
   \label{lemma:lipt_func}
   Let $f:\R^d\to \R$ be a function for which Assumptions \ref{ass:lipschitz_constant} and \ref{ass:convex} are satisfied. Then for all $x, y\in\R^d$ we have:
   \begin{align}
       \norm{\nabla f(x) - \nabla f(y)}^2 \leq 2L (f(x) - f(y) - \inp{\nabla f(y)}{x - y}).
   \end{align}
\end{lemma}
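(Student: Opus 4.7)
The plan is to reduce the claim to the elementary fact that a nonnegative, $L$-smooth function admits the ``$\|\nabla\|^2 \leq 2L\cdot (\text{value})$'' bound, obtained by one step of gradient descent applied to the Bregman-type shift of $f$ at the point $y$.

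First I would fix $y \in \R^d$ and introduce the auxiliary function
\begin{equation*}
\phi(x) \eqdef f(x) - f(y) - \inp{\nabla f(y)}{x - y}.
\end{equation*}
By Assumption~\ref{ass:convex}, $\phi$ is convex as the sum of $f$ and an affine function. By Assumption~\ref{ass:lipschitz_constant}, $\phi$ is $L$-smooth, since $\nabla \phi(x) = \nabla f(x) - \nabla f(y)$ and the affine shift does not affect the Lipschitz constant of the gradient. Moreover, $\phi(y)=0$ and $\nabla \phi(y) = 0$, so the first-order optimality condition together with convexity shows that $y$ is a global minimizer of $\phi$, hence $\phi(z)\geq 0$ for all $z \in \R^d$.

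Next I would invoke the standard descent lemma (a direct consequence of $L$-smoothness applied via the fundamental theorem of calculus): for any $x, z \in \R^d$,
\begin{equation*}
\phi(z) \leq \phi(x) + \inp{\nabla \phi(x)}{z - x} + \frac{L}{2}\norm{z - x}^2.
\end{equation*}
Plugging in the minimizer of the right-hand side, namely $z = x - \tfrac{1}{L}\nabla \phi(x)$, yields
\begin{equation*}
\phi(z) \leq \phi(x) - \frac{1}{2L}\norm{\nabla \phi(x)}^2.
\end{equation*}
Combining with $\phi(z) \geq 0$ gives $\norm{\nabla \phi(x)}^2 \leq 2L\,\phi(x)$, which is exactly the target inequality after substituting back $\nabla \phi(x) = \nabla f(x) - \nabla f(y)$ and the definition of $\phi(x)$.

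There is no real obstacle here: the only subtlety is ensuring that $\phi$ is simultaneously convex (to obtain the nonnegativity $\phi \geq 0$ from first-order optimality at $y$) and $L$-smooth (to justify the descent lemma with the same constant $L$), and both properties transfer from $f$ to $\phi$ without loss. This is the textbook proof of what is often called the ``smoothness via Bregman divergence'' or co-coercivity-lite inequality, and it does not rely on the $\mu$-strong-convexity part of Assumption~\ref{ass:convex}, which is why the lemma holds equally in the merely convex case $\mu = 0$.
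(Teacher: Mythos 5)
Your proof is correct and is exactly the standard textbook argument from the cited reference \citep{nesterov2018lectures}: the paper itself gives no proof, relying on that citation, and your reduction to the nonnegative $L$-smooth Bregman shift $\phi(x) = f(x) - f(y) - \inp{\nabla f(y)}{x-y}$ followed by one descent-lemma step is precisely that canonical derivation. Nothing is missing, and you are right that only convexity (not $\mu$-strong convexity) is used.
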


\section{Proofs of Theorems}

\subsection{Analysis of learning rates}
In this section, we establish inequalities for the sequences from Algorithm~\ref{alg:learning_rates}.
\begin{lemma}
  \label{lemma:learning_rates}
  Suppose that parameter $\bar{L} > 0,$ strong-convexity parameter $\mu \geq 0,$ probability $p \in (0, 1],$ $\bar{L} \geq \mu,$ and $\Gamma_0 \geq 1.$ Then the sequences generated by Algorithm~\ref{alg:learning_rates} have the following properties:
  \begin{enumerate}
    \item The quantities $\theta_{t+1}, \gamma_{t+1},$ and $\Gamma_{t+1}$ are well-defined and $\theta_{t+1}, \gamma_{t+1} \geq 0$ for all $t \geq 0.$
    \item $\gamma_{t+1} = p \theta_{t+1} \Gamma_{t+1}$ for all $t \geq 0.$
    \item $\bar{L} \theta_{t+1} \gamma_{t+1} \leq (\bar{L} + \Gamma_t \mu)$ for all $t \geq 0.$
    \item \begin{align*}
      \Gamma_{t} \geq \frac{\Gamma_0}{2}\exp\left(t \min\left\{\sqrt{\frac{p \mu}{4 \bar{L}}}, p\theta_{\min}\right\}\right)
    \end{align*}
    for all $t \geq 0.$
    \item 
    \begin{align*}
      \Gamma_{t} &\geq \begin{cases}
        \frac{\Gamma_0}{2} \exp \left(t p \theta_{\min}\right), &t < \bar{t} \\
        \frac{1}{4 p \theta_{\min}^2} + \frac{p (t - \bar{t})^2}{16}, &t \geq \bar{t}.
      \end{cases}
    \end{align*}
    where $\bar{t} \eqdef \max\left\{\left\lceil\frac{1}{p \theta_{\min}} \log \frac{1}{2 \Gamma_0 p \theta_{\min}^2}\right\rceil, 0\right\}.$
    \item $\{\theta_{t+1}\}_{t=0}^{\infty}$ is a non-increasing sequence.
  \end{enumerate}
\end{lemma}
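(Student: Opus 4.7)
The plan is to handle the six items in order, leveraging the explicit formula $\Gamma_{t+1} = \Gamma_t/(1 - p\theta_{t+1})$ that falls out of the update $\gamma_{t+1} = p\theta_{t+1}\Gamma_t/(1-p\theta_{t+1})$ and $\Gamma_{t+1} = \Gamma_t + \gamma_{t+1}$. Items (1) and (2) are essentially computational. For (1), I would observe that the quadratic in Line~\ref{line:learning_rates:root} has coefficients $a = p\bar L \Gamma_t > 0$, $b = p(\bar L + \Gamma_t \mu) > 0$, $c = -(\bar L + \Gamma_t\mu) < 0$, so by Vieta's the two real roots have opposite signs and the largest root $\bar\theta_{t+1}$ is positive. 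Combined with $\theta_{\min} > 0$, this yields $\theta_{t+1} > 0$; the bound $\theta_{t+1} \le \theta_{\min} \le 1/4$ and $p \le 1$ give $p\theta_{t+1} \le 1/4$, hence $\gamma_{t+1}$ is well-defined and non-negative. Item (2) is then a one-line algebraic check: $\Gamma_{t+1} = \Gamma_t/(1-p\theta_{t+1})$ implies $p\theta_{t+1}\Gamma_{t+1} = p\theta_{t+1}\Gamma_t/(1-p\theta_{t+1}) = \gamma_{t+1}$.

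For item (3), I would rewrite the identity $\bar L\theta\gamma_{t+1} = p\bar L \Gamma_t \theta^2/(1-p\theta)$ (using item (2)) and note that the map $\theta \mapsto p\bar L \Gamma_t \theta^2/(1-p\theta)$ is strictly increasing on $[0, 1/p)$. When $\theta_{t+1} = \bar\theta_{t+1}$, the quadratic rearranges exactly to $p\bar L \Gamma_t \bar\theta^2/(1-p\bar\theta) = \bar L + \Gamma_t\mu$, giving equality; when $\theta_{t+1} = \theta_{\min} < \bar\theta_{t+1}$, monotonicity yields the inequality.

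Items (4) and (5) are the crux. The plan is to lower-bound the growth ratio $\Gamma_{t+1}/\Gamma_t = 1/(1-p\theta_{t+1}) \ge 1 + p\theta_{t+1}$ in two regimes. In the \emph{saturated} regime $\theta_{t+1} = \theta_{\min}$, the ratio is at least $1 + p\theta_{\min}$. In the \emph{quadratic} regime $\theta_{t+1} = \bar\theta_{t+1} < \theta_{\min}$, the quadratic identity gives $p\bar L\Gamma_t\bar\theta^2 = (\bar L + \Gamma_t\mu)(1-p\bar\theta) \ge \tfrac{3}{4}\Gamma_t\mu$ (using $p\theta_{\min} \le 1/4$), so $\bar\theta_{t+1} \ge \sqrt{3\mu/(4p\bar L)}$, hence $p\bar\theta_{t+1} \ge \sqrt{3p\mu/(4\bar L)}$. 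Combining both cases and using $1+x \ge e^{x/2}$ for $x \in [0,1]$ (absorbing constants into the stated form) yields the uniform exponential bound in (4). For (5), I would split the trajectory at the first time $\bar t$ when the algorithm leaves the saturated regime; before $\bar t$ the exponential bound from (4) applies, and after $\bar t$ I use the $\mu = 0$ analysis of the quadratic: dropping the $\mu$ term gives $\bar\theta_{t+1} \ge c/\sqrt{p\Gamma_t}$, so the increment $\gamma_{t+1} \ge c'\sqrt{\Gamma_t/p} \cdot p \sim \sqrt{p\Gamma_t}$, which after a standard telescoping comparison with the ODE $\dot\Gamma \sim \sqrt{p\Gamma}$ produces the quadratic floor $p(t-\bar t)^2/16$ (with the additive $1/(4p\theta_{\min}^2)$ coming from continuity at $\bar t$). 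The definition of $\bar t$ is chosen precisely so that the exponential bound from (4) reaches this threshold.

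For item (6), I would argue by the implicit function theorem (or by direct inspection of the quadratic formula): differentiating the defining relation with respect to $\Gamma_t$, the coefficient of $d\bar\theta/d\Gamma_t$ is $2p\bar L\Gamma_t\bar\theta + p(\bar L + \Gamma_t\mu) > 0$, while the derivative of the constant part is $p\mu\bar\theta - \mu = \mu(p\bar\theta - 1) < 0$, so $\bar\theta_{t+1}$ is a strictly decreasing function of $\Gamma_t$. Since $\Gamma_{t+1} \ge \Gamma_t$ by (1)--(2), we get $\bar\theta_{t+2} \le \bar\theta_{t+1}$, and the truncation by the constant $\theta_{\min}$ preserves monotonicity: $\theta_{t+2} = \min\{\bar\theta_{t+2},\theta_{\min}\} \le \min\{\bar\theta_{t+1},\theta_{\min}\} = \theta_{t+1}$. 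I expect the main obstacle to be the bookkeeping in (5), where tracking the constants through the transition between the two regimes and pinning down $\bar t$ will require care; the other items are essentially direct consequences of the quadratic and the closed-form recursion for $\Gamma_t$.
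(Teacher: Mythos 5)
Your items (1)--(3) are correct and essentially the paper's own argument (for (3) you should at least note that the largest root satisfies $\bar\theta_{t+1}<\nicefrac{1}{p}$, e.g.\ because the quadratic is positive at $\nicefrac1p$, so that the rearrangement $p\bar L\Gamma_t\bar\theta_{t+1}^2/(1-p\bar\theta_{t+1})=\bar L+\Gamma_t\mu$ and the monotonicity argument are legitimate). The problems are in (4)--(6). In (4), the exponent cannot be ``absorbed'': with $1+x\ge e^{x/2}$ you only obtain $\Gamma_t\ge \Gamma_0\exp\bigl(\tfrac t2\min\{\cdot\}\bigr)$, and a factor $2$ inside the exponent is not a constant factor, so the stated bound is not proved. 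The fix is cheap and already in your hands: $\Gamma_{t+1}/\Gamma_t=1/(1-p\theta_{t+1})\ge e^{p\theta_{t+1}}$ (or $1+x+x^2\ge e^x$ on $[0,1]$), combined with the per-step bound $\theta_{t+1}\ge\min\{\sqrt{\mu/(4p\bar L)},\theta_{\min}\}$, gives the claim with the exact constants. The more serious gap is (5). The $\bar t$ in the statement is the explicit deterministic quantity, not your trajectory-dependent ``first exit from the saturated regime,'' and the step ``before $\bar t$ the exponential bound from (4) applies'' fails twice. First, (4) has rate $\min\{\sqrt{p\mu/(4\bar L)},p\theta_{\min}\}$, which is weaker than the rate $p\theta_{\min}$ required by the first case of (5) and is vacuous when $\mu=0$ --- precisely the general convex case for which (5) exists; the correct sub-threshold argument is the $\mu$-free bound $\bar\theta_{t+1}\ge\sqrt{1/(4p\Gamma_t)}$, which forces $\theta_{t+1}=\theta_{\min}$ while $\Gamma_t<1/(4p\theta_{\min}^2)$. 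Second, that saturated growth is guaranteed only up to the first crossing time $\hat t$ of the threshold $1/(4p\theta_{\min}^2)$, which can be strictly smaller than $\bar t$; on $\hat t\le t<\bar t$ the quadratic branch may be active with $\bar\theta_{t+1}<\theta_{\min}$, so the per-step rate $p\theta_{\min}$ is no longer available, and the exponential lower bound for those $t$ needs a separate comparison (the paper's route: the quadratic floor anchored at $\hat t$, or simply $\Gamma_t\ge 1/(4p\theta_{\min}^2)$, dominates $\tfrac{\Gamma_0}{2}e^{tp\theta_{\min}}$ for $t<\bar t$ by the very definition of $\bar t$). Likewise, after the threshold the $\sqrt{p\Gamma_t}$-sized increment must be established on \emph{both} branches: on the saturated branch it follows from $p\theta_{\min}\Gamma_t\ge\tfrac12\sqrt{p\Gamma_t}$ once $\Gamma_t\ge 1/(4p\theta_{\min}^2)$, not from ``dropping $\mu$'' in the quadratic.

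In (6) your implicit differentiation is wrong as written: differentiating $p\bar L\Gamma_t\bar\theta^2+p(\bar L+\Gamma_t\mu)\bar\theta-(\bar L+\Gamma_t\mu)=0$ in $\Gamma_t$ also produces the term $p\bar L\bar\theta^2$ from the leading coefficient. With only $\mu(p\bar\theta-1)<0$, the implicit function theorem would give $d\bar\theta/d\Gamma_t=-\partial_{\Gamma}F/\partial_{\theta}F>0$, i.e.\ the \emph{opposite} monotonicity (and your expression vanishes at $\mu=0$, where monotonicity certainly still holds). The conclusion is nevertheless true: at the largest root $p\bar L\bar\theta^2=(\bar L+\Gamma_t\mu)(1-p\bar\theta)/\Gamma_t$, hence $\partial_{\Gamma}F=(1-p\bar\theta)\bar L/\Gamma_t>0$ and $d\bar\theta/d\Gamma_t<0$; alternatively, do as the paper and write the root as an increasing function of $a_t=1/\Gamma_t+\mu/\bar L$, which is non-increasing in $t$. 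So the overall architecture matches the paper, but as it stands the proposal does not prove (4) with the stated constants, omits the genuinely needed argument bridging $\hat t$ and $\bar t$ in (5), and justifies (6) with an incorrect computation.
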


\begin{proof}
  \hfill
  \begin{enumerate}
  \item 
  Note that $\bar{\theta}_{t+1}$ is the largest root of
  \begin{align}
    \label{eq:quad}
    p \bar{L} \Gamma_t \bar{\theta}^2_{t+1} + p (\bar{L} + \Gamma_t \mu) \bar{\theta}_{t+1} - (\bar{L} + \Gamma_t \mu) = 0.
  \end{align} 
  We fix $t \geq 0.$ Assume that $\Gamma_t > 0.$
  
  Then
  \begin{align*}
    p \bar{L} \Gamma_t \times 0 + p (\bar{L} + \Gamma_t \mu) \times 0 - (\bar{L} + \Gamma_t \mu) < 0.
  \end{align*}
  Therefore, the largest root $\bar{\theta}_{t+1}$ is well-defined and $\bar{\theta}_{t+1} \geq 0,$ and $\theta_{t+1} = \min\{\bar{\theta}_{t+1}, \theta_{\min}\} \geq 0.$ Next, $\gamma_{t+1}$ is well-defined and
  $$\gamma_{t+1} = p \theta_{t+1} \Gamma_t / (1 - p \theta_{t+1}) \geq 0$$ since $p \theta_{t+1} \in [0, \nicefrac{1}{4}].$ Finally, $\Gamma_{t+1} = \Gamma_{t} + \gamma_{t+1} > 0.$ 
  We showed that, for all $t \geq 0,$ if $\Gamma_t > 0,$ then $\theta_{t+1}, \gamma_{t+1} \geq 0$ and $\Gamma_{t+1} > 0.$ Note that $\Gamma_0 > 0,$ thus $\theta_{t+1}, \gamma_{t+1} \geq 0$ and $\Gamma_{t+1} > 0$ for all $t \geq 0.$
  \item
  From the definition of $\gamma_{t+1}$ and $\Gamma_{t+1},$ we have
  $$(1 - p \theta_{t+1}) \gamma_{t+1} = p \theta_{t+1} \Gamma_t,$$
  which is equivalent to
  $$\gamma_{t+1} = p \theta_{t+1} \left(\Gamma_t + \gamma_{t+1}\right) = p \theta_{t+1} \Gamma_{t+1}.$$
  \item
  Recall again that 
  $\bar{\theta}_{t+1} \geq 0$ is the largest root of
  $$p \bar{L} \Gamma_t \bar{\theta}^2_{t+1} + p (\bar{L} + \Gamma_t \mu) \bar{\theta}_{t+1} - (\bar{L} + \Gamma_t \mu) = 0.$$ 
  If $\bar{\theta}_{t+1} \leq \theta_{\min},$ then
  $$p \bar{L} \Gamma_t \theta^2_{t+1} + p (\bar{L} + \Gamma_t \mu) \theta_{t+1} - (\bar{L} + \Gamma_t \mu) = 0.$$ 
  Otherwise, if $\bar{\theta}_{t+1} > \theta_{\min},$ since
  $$p \bar{L} \Gamma_t \times 0 + p (\bar{L} + \Gamma_t \mu) \times 0 - (\bar{L} + \Gamma_t \mu) < 0,$$
  and $\theta_{t+1} = \theta_{\min} < \bar{\theta}_{t+1},$ then 
  \begin{align}p \bar{L} \Gamma_t \theta^2_{t+1} + p (\bar{L} + \Gamma_t \mu) \theta_{t+1} - (\bar{L} + \Gamma_t \mu) \leq 0.\label{eq:cond_theta}\end{align} In all cases, the inequality \eqref{eq:cond_theta} holds. From this inequality, we can get
  \begin{align*}
    p \bar{L} \Gamma_t \theta^2_{t+1} \leq (\bar{L} + \Gamma_t \mu) \left(1 - p \theta_{t+1}\right)
  \end{align*}
  and
  \begin{align*}
    \bar{L} \theta_{t+1} \frac{p \theta_{t+1} \Gamma_t}{\left(1 - p \theta_{t+1}\right)} \leq (\bar{L} + \Gamma_t \mu).
  \end{align*}
  Using the definition of $\gamma_{t+1},$ we obtain
  \begin{align*}
    \bar{L} \theta_{t+1} \gamma_{t+1} \leq (\bar{L} + \Gamma_t \mu)
  \end{align*}
  for all $t \geq 0.$
  \item Let us find the largest root of the quadratic equation \eqref{eq:quad}:
  \begin{align}
    \label{eq:theta_sol}
    \bar{\theta}_{t+1} \eqdef \frac{-p (\bar{L} + \Gamma_t \mu) + \sqrt{p^2 (\bar{L} + \Gamma_t \mu)^2 + 4 p \bar{L} \Gamma_t (\bar{L} + \Gamma_t \mu)}}{2 p \bar{L} \Gamma_t}.
  \end{align}
  Let us define $a \eqdef p^2\left(\bar{L} + \Gamma_{t} \mu\right)^2$ and $b \eqdef 4 \bar{L} p \left(\bar{L} + \Gamma_{t} \mu\right) \Gamma_{t},$
then
\begin{align*}
  \bar{\theta}_{t+1} = \frac{-\sqrt{a} + \sqrt{a + b}}{2 \bar{L} p \Gamma_t}
\end{align*}
Since $\Gamma_{t} \geq 1$ for all $t \geq 0,$ and $\bar{L} \geq \mu,$ we have $$a = p^2\left(\bar{L} + \Gamma_{t} \mu\right)^2 \leq p\left(\bar{L} + \Gamma_{t} \mu\right)^2 \leq p\left(\bar{L} + \Gamma_{t} \mu\right)\left(\Gamma_{t} \bar{L} + \Gamma_{t} \mu\right) \leq 2 \bar{L} p \left(\bar{L} + \Gamma_{t} \mu\right)\Gamma_{t} = \frac{b}{2}.$$
Using $\sqrt{x + y} \geq \left(\sqrt{x} + \sqrt{y}\right) / \sqrt{2}$ for all $x, y \geq 0, $ and $\sqrt{b} \geq \sqrt{2} \sqrt{a}$ we have
\begin{align*}
  \bar{\theta}_{t+1} &= \frac{-\sqrt{a} + \sqrt{a + b}}{2 \bar{L} p \Gamma_t} \geq \frac{-\sqrt{a} + \frac{1}{\sqrt{2}}\sqrt{a} + \frac{1}{\sqrt{2}}\sqrt{b}}{2 \bar{L} p \Gamma_t} \\
  &= \frac{\left(\frac{1}{\sqrt{2}} - 1\right)\sqrt{a} + \frac{1}{\sqrt{2}} \left(\frac{1}{\sqrt{2}} + 1 - \frac{1}{\sqrt{2}}\right)\sqrt{b}}{2 \bar{L} p \Gamma_t} \geq \frac{\frac{1}{\sqrt{2}} \left(\frac{1}{\sqrt{2}}\right)\sqrt{b}}{2 \bar{L} p \Gamma_t} = \frac{\sqrt{b}}{4 \bar{L} p \Gamma_t}.
\end{align*}
Therefore
\begin{align*}
  \bar{\theta}_{t+1} \geq \frac{\sqrt{4 \bar{L} p \left(\bar{L} + \Gamma_{t} \mu\right) \Gamma_{t}}}{4 \bar{L} p \Gamma_t} = \sqrt{\frac{\left(\bar{L} + \Gamma_{t} \mu\right)}{4 \bar{L} p \Gamma_t}} \geq \max\left\{\sqrt{\frac{1}{4 p \Gamma_t}}, \sqrt{\frac{\mu}{4 p \bar{L}}}\right\}
\end{align*}
and
\begin{align}
  \label{eq:theta_ineq}
  \theta_{t+1} \geq \min\left\{\max\left\{\sqrt{\frac{1}{4 p \Gamma_t}}, \sqrt{\frac{\mu}{4 p \bar{L}}}\right\}, \theta_{\min}\right\}.
\end{align}
Next, since $p \theta_{t+1} \in [0, \nicefrac{1}{4}],$ we have
\begin{align}
  \label{eq:gamma_ineq}
  \gamma_{t+1} \eqdef p \theta_{t+1} \Gamma_t / (1 - p \theta_{t+1}) \geq p \theta_{t+1} \Gamma_t \left(1 + p \theta_{t+1}\right)
\end{align}
and
\begin{align*}
  \Gamma_{t+1} \eqdef \Gamma_t + \gamma_{t+1} \geq \left(1 + p \theta_{t+1} + p^2 \theta_{t+1}^2\right) \Gamma_t.
\end{align*}
Using \eqref{eq:theta_ineq} and \eqref{eq:gamma_ineq}, we obtain
\begin{equation}
\begin{aligned}
  \label{eq:gamma_ineq_1}
  \Gamma_{t+1} &\geq \left(1 + p \min\left\{\sqrt{\frac{\mu}{4 p \bar{L}}}, \theta_{\min}\right\}\right) \Gamma_t = \left(1 + \min\left\{\sqrt{\frac{p \mu}{4 \bar{L}}}, p\theta_{\min}\right\}\right) \Gamma_t \\
  &\geq \Gamma_0 \left(1 + \min\left\{\sqrt{\frac{p \mu}{4 \bar{L}}}, p\theta_{\min}\right\}\right)^{t + 1} \geq \frac{\Gamma_0}{2}\exp\left((t + 1) \min\left\{\sqrt{\frac{p \mu}{4 \bar{L}}}, p\theta_{\min}\right\}\right),
\end{aligned}
\end{equation}
where we use that $1 + x \geq e^x / 2$ for all $x \in [0, 1].$ 
\item
Using \eqref{eq:theta_ineq} and \eqref{eq:gamma_ineq}, we have
\begin{align*}
  \Gamma_{t+1} &\geq \left(1 + p \theta_{t+1} + p^2 \theta_{t+1}^2\right) \Gamma_t \\
  &\geq \Gamma_t + p\min\left\{\sqrt{\frac{1}{4 p \Gamma_t}}, \theta_{\min}\right\} \Gamma_t + p^2\min\left\{\sqrt{\frac{1}{4 p \Gamma_t}}, \theta_{\min}\right\}^2 \Gamma_t.
\end{align*}
The sequence $\Gamma_{t+1}$ is strongly increasing. Thus, exists the minimal $\widehat{t} \geq 0$ such that $\Gamma_{\widehat{t}} \geq (4 p \theta_{\min}^2)^{-1}.$ For all $0 \leq t < \widehat{t},$ it holds that $\Gamma_{t} < (4 p \theta_{\min}^2)^{-1},$ $\sqrt{\frac{1}{4 p \Gamma_t}} > \theta_{\min},$ and 
\begin{align}
  \label{eq:gamma_first}
  \Gamma_{t+1} \geq \Gamma_t + p \theta_{\min} \Gamma_t + p^2 \theta_{\min}^2 \Gamma_t \geq \Gamma_t + p \theta_{\min} \Gamma_t \geq \Gamma_0 \left(1 + p \theta_{\min}\right)^{t+1} \geq \frac{\Gamma_0}{2} \exp \left((t+1) p \theta_{\min}\right).
\end{align}
Therefore, if $\widehat{t} > 0,$ then
\begin{align*}
  \frac{1}{4 p \theta_{\min}^2} > \Gamma_{\widehat{t} - 1} \geq \frac{\Gamma_0}{2} \exp \left((\widehat{t} - 1) p \theta_{\min}\right).
\end{align*}
Thus, we have the following bound for $\widehat{t}$:
\begin{align*}
  \widehat{t} \leq \bar{t} \eqdef \max\left\{\left\lceil\frac{1}{p \theta_{\min}} \log \frac{1}{2 \Gamma_0 p \theta_{\min}^2}\right\rceil, 0\right\}.
\end{align*}
For all $t \geq \widehat{t},$ we have $\sqrt{\frac{1}{4 p \Gamma_t}} \leq \theta_{\min}$ and
\begin{align*}
  \Gamma_{t+1} \geq \Gamma_t + p\sqrt{\frac{1}{4 p \Gamma_t}} \Gamma_t + p^2 \frac{1}{4 p \Gamma_t} \Gamma_t = \Gamma_t + \sqrt{\frac{p}{4}} \sqrt{\Gamma_t} + \frac{p}{4}.
\end{align*}
Using mathematical induction, let us show that 
\begin{align}
  \label{eq:gamma_second}
  \Gamma_{t} \geq \frac{1}{4 p \theta_{\min}^2} + \frac{p (t - \widehat{t})^2}{16}
\end{align} for all $t \geq \widehat{t}.$ For $t = \widehat{t}$ it is true since $\Gamma_{\widehat{t}} \geq \left(4 p \theta_{\min}^2\right)^{-1}$ by the definition of $\widehat{t}.$ Next, for some $t \geq \widehat{t},$ assume that \eqref{eq:gamma_second} holds, then 
\begin{align*}
  \Gamma_{t+1} &\geq \Gamma_t + \frac{\sqrt{p}}{2} \sqrt{\Gamma_t} + \frac{p}{4}\\
  &\geq \frac{1}{4 p \theta_{\min}^2} + \frac{p (t - \widehat{t})^2}{16} + \frac{\sqrt{p}}{2} \sqrt{\frac{1}{4 p \theta_{\min}^2} + \frac{p (t - \widehat{t})^2}{16}} + \frac{p}{4}\\
  &\geq \frac{1}{4 p \theta_{\min}^2} + \frac{p (t - \widehat{t})^2}{16} + \frac{p (t - \widehat{t})}{8} + \frac{p}{4}\\
  &\geq \frac{1}{4 p \theta_{\min}^2} + \frac{p (t - \widehat{t} + 1)^2}{16}.
\end{align*}
We proved the inequality using mathematical induction.
Combining \eqref{eq:gamma_first} and \eqref{eq:gamma_second}, we obtain the unified inequality for $\Gamma_t$:
\begin{align*}
  \Gamma_{t} &\geq \min\left\{\frac{\Gamma_0}{2} \exp \left(t p \theta_{\min}\right), \frac{1}{4 p \theta_{\min}^2} + \frac{p (t - \widehat{t})^2}{16}\right\}
\end{align*}
for all $t \geq 0.$ Also, if $t < \bar{t},$ then the first term in the minimum is less or equal than the second one. Therefore, 
\begin{align}
  \label{eq:aux_gamma_min}
  \Gamma_{t} &\geq \begin{cases}
    \frac{\Gamma_0}{2} \exp \left(t p \theta_{\min}\right), &t < \bar{t} \\
    \min\left\{\frac{\Gamma_0}{2} \exp \left(t p \theta_{\min}\right), \frac{1}{4 p \theta_{\min}^2} + \frac{p (t - \widehat{t})^2}{16}\right\}, &t \geq \bar{t}.
  \end{cases}
\end{align}
Let us bound the second term.
Recall that $\bar{t} \geq \widehat{t},$ thus, if $t \geq \bar{t},$ then $(t - \widehat{t})^2 \geq (t - \bar{t})^2.$ In \eqref{eq:aux_gamma_min}, we can change $\widehat{t}$ to $\bar{t}$ and get
\begin{align*}
  \Gamma_{t} &\geq \begin{cases}
    \frac{\Gamma_0}{2} \exp \left(t p \theta_{\min}\right), &t < \bar{t} \\
    \min\left\{\frac{\Gamma_0}{2} \exp \left(t p \theta_{\min}\right), \frac{1}{4 p \theta_{\min}^2} + \frac{p (t - \bar{t})^2}{16}\right\}, &t \geq \bar{t}.
  \end{cases}
\end{align*}
Using the Taylor expansion of the exponent at the point $\bar{t}$, we get
\begin{align*}
  \frac{\Gamma_0}{2} \exp \left(t p \theta_{\min}\right) &\geq \frac{\Gamma_0}{2} \exp \left(\bar{t} p \theta_{\min}\right) + \frac{\Gamma_0}{2} p^2 \theta_{\min}^2 \exp \left(\bar{t} p \theta_{\min}\right) \frac{(t - \bar{t})^2}{2}\\
  &\geq \frac{1}{4 p \theta_{\min}^2} + \frac{p}{8} (t - \bar{t})^2 \geq \frac{1}{4 p \theta_{\min}^2} + \frac{p}{16} (t - \bar{t})^2.
\end{align*}
for all $t \geq \bar{t}.$ Finally, we can conclude that
\begin{align*}
  \Gamma_{t} &\geq \begin{cases}
    \frac{\Gamma_0}{2} \exp \left(t p \theta_{\min}\right), &t < \bar{t} \\
    \frac{1}{4 p \theta_{\min}^2} + \frac{p (t - \bar{t})^2}{16}, &t \geq \bar{t}.
  \end{cases}
\end{align*}
\item
Let us rewrite \eqref{eq:theta_sol}:
\begin{align*}
  \bar{\theta}_{t+1} &= \frac{-p (\bar{L} + \Gamma_t \mu) + \sqrt{p^2 (\bar{L} + \Gamma_t \mu)^2 + 4 p \bar{L} \Gamma_t (\bar{L} + \Gamma_t \mu)}}{2 p \bar{L} \Gamma_t} \\
  &= -\frac{1}{2} \left(\frac{1}{\Gamma_t} + \frac{\mu}{\bar{L}}\right) + \sqrt{\frac{1}{4} \left(\frac{1}{\Gamma_t} + \frac{\mu}{\bar{L}}\right)^2 + \frac{1}{p} \left(\frac{1}{\Gamma_t} + \frac{\mu}{\bar{L}}\right)}. \\
\end{align*}
Let us temporarily denote $a_t \eqdef \left(\frac{1}{\Gamma_t} + \frac{\mu}{\bar{L}}\right)$ for all $t \geq 0.$ Note that $a_t$ is a non-increasing sequence, since $\Gamma_{t+1} \geq \Gamma_{t}$ for all $t \geq 0.$ Therefore,
\begin{align*}
  \bar{\theta}_{t+1} &= -\frac{1}{2} a_t + \sqrt{\frac{1}{4} a_t^2 + \frac{1}{p} a_t}. \\
\end{align*}
Let us take the derivative of the last term w.r.t. $a_t$ and compare it to zero:
\begin{align*}
  -\frac{1}{2} + \frac{\frac{1}{2} a_t + \frac{1}{p}}{2 \sqrt{\frac{1}{4} a_t^2 + \frac{1}{p} a_t}} \geq 0 &\Leftrightarrow \frac{1}{2} a_t + \frac{1}{p} \geq \sqrt{\frac{1}{4} a_t^2 + \frac{1}{p} a_t} \\
  &\Leftrightarrow \frac{1}{4} a_t^2 + \frac{1}{p} a_t + \frac{1}{p^2} \geq \frac{1}{4} a_t^2 + \frac{1}{p} a_t \Leftrightarrow \frac{1}{p^2} \geq 0.
\end{align*}
Thus, $\bar{\theta}_{t+1}$ is a non-decreasing sequence w.r.t. $a_t.$ But the sequence $a_t$ is non-increasing, therefore $\bar{\theta}_{t+1}$ is a non-increasing sequence w.r.t. $t.$ It is left to use that $\theta_{t+1}$ is the minimum of $\bar{\theta}_{t+1}$ and the constant quantity.
\end{enumerate}
\end{proof}

\subsection{Generic lemmas}

First, we prove a well-known lemma from the theory of accelerated methods \citep{lan2020first, stonyakin2021inexact}.
\begin{lemma}
  \label{lemma:aux_opt}
  Let us take vectors $a, b, g \in \R^d,$ numerical quantities $\alpha, \beta \geq 0,$ and
  \begin{align*}
    u = \argmin_{x \in \R^d} \inp{g}{x} + \frac{\alpha}{2} \norm{x - a}^2 + \frac{\beta}{2} \norm{x - b}^2.
  \end{align*}
  Then
  \begin{align}
    \label{eq:aux_opt}
    &\inp{g}{x} + \frac{\alpha}{2} \norm{x - a}^2 + \frac{\beta}{2} \norm{x - b}^2 \geq \inp{g}{u} + \frac{\alpha}{2} \norm{u - a}^2 + \frac{\beta}{2} \norm{u - b}^2 + \frac{\alpha + \beta}{2} \norm{x - u}^2
  \end{align}
  for all $x \in \R^d.$
\end{lemma}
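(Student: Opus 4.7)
The function $\phi(x) := \inp{g}{x} + \frac{\alpha}{2}\norm{x-a}^2 + \frac{\beta}{2}\norm{x-b}^2$ is a quadratic in $x$ whose Hessian is the constant operator $(\alpha+\beta)I$. In particular, $\phi$ is $(\alpha+\beta)$-strongly convex and its second-order Taylor expansion at any point is exact. My plan is to exploit this structure directly, together with the first-order optimality condition at the unique minimizer $u$.

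Concretely, I would first compute $\nabla \phi(x) = g + \alpha(x-a) + \beta(x-b)$, so that the optimality of $u$ gives
\begin{align*}
  g + \alpha(u-a) + \beta(u-b) = 0.
\end{align*}
Next, for any $x \in \R^d$, I would use the identity
$\norm{x-c}^2 = \norm{u-c}^2 + 2\inp{u-c}{x-u} + \norm{x-u}^2$
(which holds for every $c \in \R^d$) with $c = a$ and $c = b$. Substituting into $\phi(x)$ and writing $\inp{g}{x} = \inp{g}{u} + \inp{g}{x-u}$, the cross terms collect as
\begin{align*}
  \phi(x) - \phi(u)
  &= \inp{g + \alpha(u-a) + \beta(u-b)}{x-u} + \tfrac{\alpha+\beta}{2}\norm{x-u}^2.
\end{align*}
The first inner product vanishes by the optimality condition, which yields $\phi(x) - \phi(u) = \frac{\alpha+\beta}{2}\norm{x-u}^2$, i.e.\ \eqref{eq:aux_opt} holds with equality and a fortiori as stated.

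There is no real obstacle here: the result is a textbook consequence of strong convexity of a quadratic, and the only step worth underlining is that the linear term in the three-point expansion disappears because $u$ is the unconstrained minimizer. An alternative but essentially equivalent route is to invoke the general fact that every $\mu$-strongly convex function $\phi$ with minimizer $u$ satisfies $\phi(x) \geq \phi(u) + \frac{\mu}{2}\norm{x-u}^2$, applied with $\mu = \alpha+\beta$; I would prefer the direct expansion since it needs no external reference and produces the sharper equality version.
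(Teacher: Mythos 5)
Your proof is correct and takes essentially the same route as the paper, which applies the $(\alpha+\beta)$-strong convexity of the quadratic objective together with the vanishing of its gradient at the minimizer $u$ — exactly the "alternative route" you mention at the end. Your explicit three-point expansion is a fine equivalent presentation and additionally shows that \eqref{eq:aux_opt} in fact holds with equality.
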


\begin{proof}
  A function 
  \begin{align*}
    \widehat{f}(x) \eqdef \inp{g}{x} + \frac{\alpha}{2} \norm{x - a}^2 + \frac{\beta}{2} \norm{x - b}^2
  \end{align*}
  is strongly-convex with the parameter $\alpha + \beta.$ From the strong-convexity and the optimality of $u$, we obtain
  \begin{align*}
    \widehat{f}(x) \geq \widehat{f}(u) + \inp{\widehat{f}(u)}{x - u} + \frac{\alpha + \beta}{2} \norm{x - u}^2 = \widehat{f}(u) + \frac{\alpha + \beta}{2} \norm{x - u}^2
  \end{align*}
  for all $x \in \R^d.$
  This inequality is equivalent to \eqref{eq:aux_opt}.
\end{proof}

We assume that the conditional expectation $\ExpSub{t}{\cdot}$ is conditioned on the randomness from the first $t$ iterations. Also, let us define $D_f(x, y) \eqdef f(x) - f(y) - \inp{\nabla f(y)}{x - y}.$

\begin{lemma}
  \label{lemma:aux_stoch_grad}
  Suppose that Assumptions~\ref{ass:lipschitz_constant}, \ref{ass:workers_lipschitz_constant}, \ref{ass:convex} and \ref{ass:unbiased_compressors} hold. For Algorithm~\ref{alg:bi_diana}, the following inequality holds:
  \begin{equation}
    \begin{aligned}
      \label{eq:nabla_g_f}
    &\ExpSub{t}{\norm{g^{t+1} - \nabla f(y^{t+1})}^2} \\ 
    &\leq\frac{2 \omega}{n^2} \sum_{i=1}^n \norm{\nabla f_i(z^{t}) - h_i^t}^2 + \frac{4 \omega L_{\max}}{n} (f(z^t) - f(y^{t+1}) - \inp{\nabla f(y^{t+1})}{z^t - y^{t+1}}).
  \end{aligned}
  \end{equation}
\end{lemma}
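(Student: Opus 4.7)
The plan is to peel the estimate apart worker by worker, using that each $\cC_i^{D,y}$ is unbiased and that, by Assumption~\ref{ass:unbiased_compressors}, the compressors are independent across workers. First, conditioning on the history through iteration $t$ (so that $h_i^t$ and $y^{t+1}$ are deterministic), the unbiasedness gives $\ExpSub{t}{m_i^{t,y}}=\nabla f_i(y^{t+1})-h_i^t$, hence $\ExpSub{t}{g^{t+1}}=\nabla f(y^{t+1})$. Writing
\[
g^{t+1}-\nabla f(y^{t+1})=\frac{1}{n}\sum_{i=1}^{n}\Bigl[\cC_i^{D,y}(\nabla f_i(y^{t+1})-h_i^t)-(\nabla f_i(y^{t+1})-h_i^t)\Bigr],
\]
the summands are zero-mean and mutually independent conditional on $\mathcal{F}_t$, so the variance splits:
\[
\ExpSub{t}{\norm{g^{t+1}-\nabla f(y^{t+1})}^2}=\frac{1}{n^2}\sum_{i=1}^{n}\ExpSub{t}{\norm{\cC_i^{D,y}(\nabla f_i(y^{t+1})-h_i^t)-(\nabla f_i(y^{t+1})-h_i^t)}^2}.
\]

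Second, I would apply the $\mathbb{U}(\omega)$ property from Definition~\ref{def:unbiased_compression} pointwise to each term, obtaining the upper bound $\frac{\omega}{n^2}\sum_i\norm{\nabla f_i(y^{t+1})-h_i^t}^2$. Since the lemma statement involves $z^t$ rather than $y^{t+1}$, the natural step is to add and subtract $\nabla f_i(z^t)$ and apply \eqref{eq:young_2} to get
\[
\norm{\nabla f_i(y^{t+1})-h_i^t}^2\leq 2\norm{\nabla f_i(z^t)-h_i^t}^2+2\norm{\nabla f_i(y^{t+1})-\nabla f_i(z^t)}^2.
\]
The first term is exactly what appears in the target bound.

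Third, to convert the gradient-difference term into a Bregman-like quantity I would invoke Lemma~\ref{lemma:lipt_func} applied to each $f_i$ (which is $L_i$-smooth and convex by Assumptions~\ref{ass:workers_lipschitz_constant} and \ref{ass:convex}): $\norm{\nabla f_i(y^{t+1})-\nabla f_i(z^t)}^2\leq 2L_i\bigl(f_i(z^t)-f_i(y^{t+1})-\inp{\nabla f_i(y^{t+1})}{z^t-y^{t+1}}\bigr)$. Bounding $L_i\leq L_{\max}$ and averaging over $i\in[n]$ collapses the per-worker Bregman terms into $f(z^t)-f(y^{t+1})-\inp{\nabla f(y^{t+1})}{z^t-y^{t+1}}$, with the factor $\tfrac{1}{n^2}\cdot n=\tfrac{1}{n}$ producing the prefactor $\frac{4\omega L_{\max}}{n}$ claimed in \eqref{eq:nabla_g_f}. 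Combining the two bounds yields the lemma.

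No serious obstacle is expected here: the result is a routine variance-decomposition argument in the style used for \algname{DIANA}-type analyses, and the slight twist — expressing the variance around the anchor point $z^t$ rather than $y^{t+1}$ — is handled cleanly by the one application of \eqref{eq:young_2} combined with Lemma~\ref{lemma:lipt_func}. The only subtle point worth noting is that independence of the compressors must be combined with the conditioning on $\mathcal{F}_t$ to justify killing the cross terms, which is exactly the role of Assumption~\ref{ass:unbiased_compressors}.
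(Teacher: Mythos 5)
Your proposal is correct and follows essentially the same route as the paper's proof: conditional variance decomposition via unbiasedness and independence of the compressors, the $\mathbb{U}(\omega)$ bound, one application of \eqref{eq:young_2} to shift the anchor from $y^{t+1}$ to $z^t$, and Lemma~\ref{lemma:lipt_func} applied to each $f_i$ with $L_i \leq L_{\max}$. No gaps.
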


\begin{proof}
  Using the definition of $g^{t+1},$ we have
  \begin{align*}
    &\ExpSub{t}{\norm{g^{t+1} - \nabla f(y^{t+1})}^2} \\ 
    &=\ExpSub{t}{\norm{h^t + \frac{1}{n}\sum_{i=1}^n \cC_i^{D, y}(\nabla f_i(y^{t+1}) - h_i^t) - \nabla f(y^{t+1})}^2} \\
    &=\frac{1}{n^2} \sum_{i=1}^n \ExpSub{t}{\norm{\cC_i^{D, y}(\nabla f_i(y^{t+1}) - h_i^t) - \left(\nabla f(y^{t+1}) - h_i^t\right)}^2},
  \end{align*}
  where we use the independence and the unbiasedness of the compressors (see Assumption~\ref{ass:unbiased_compressors}). Using Definition~\ref{def:unbiased_compression} and Assumption~\ref{ass:workers_lipschitz_constant}, we have
  \begin{equation*}
  \begin{aligned}
    &\ExpSub{t}{\norm{g^{t+1} - \nabla f(y^{t+1})}^2} \\ 
    &\leq\frac{\omega}{n^2} \sum_{i=1}^n \norm{\nabla f_i(y^{t+1}) - h_i^t}^2 \\
    &\overset{\eqref{eq:young_2}}{\leq}\frac{2 \omega}{n^2} \sum_{i=1}^n \norm{\nabla f_i(z^{t}) - h_i^t}^2 + \frac{2 \omega}{n^2} \sum_{i=1}^n \norm{\nabla f_i(y^{t+1}) - \nabla f_i(z^{t})}^2 \\
    &\overset{L.\ref{lemma:lipt_func}}{\leq}\frac{2 \omega}{n^2} \sum_{i=1}^n \norm{\nabla f_i(z^{t}) - h_i^t}^2 + \frac{2 \omega}{n^2} \sum_{i=1}^n 2L_i (f_i(z^t) - f_i(y^{t+1}) - \inp{\nabla f_i(y^{t+1})}{z^t - y^{t+1}}).
  \end{aligned}
  \end{equation*}
  Using that $L_{\max} = \max_{i \in [n]} L_i,$ we obtain \eqref{eq:nabla_g_f}.
\end{proof}

\begin{lemma}
  \label{lemma:aux_first}
  Suppose that Assumptions~\ref{ass:lipschitz_constant}, \ref{ass:workers_lipschitz_constant}, \ref{ass:convex} and \ref{ass:unbiased_compressors} hold. For Algorithm~\ref{alg:bi_diana}, the following inequality holds:
  \begin{align*}
    &\ExpSub{t}{f(z^{t+1}) - f(x^*)} \\
    &\leq (1 - p\theta_{t+1}) \left(f(z^{t}) - f(x^*)\right) \\
    &\quad + \frac{2 p \omega}{n \bar{L}} \left(\frac{1}{n}\sum_{i=1}^n \norm{h_i^t - \nabla f_i(z^{t})}^2\right) \\
    &\quad+ p\theta_{t+1} \left(\frac{\bar{L} + \Gamma_{t} \mu}{2 \gamma_{t+1}} \norm{u^t - x^*}^2 - \frac{\bar{L} + \Gamma_{t+1} \mu}{2 \gamma_{t+1}} \ExpSub{t}{\norm{u^{t+1} - x^*}^2}\right) \\
    &\quad+ p\left(\frac{4 \omega L_{\max}}{n \bar{L}} + \theta_{t+1} - 1 \right) D_f(z^t, y^{t+1}) \\
    &\quad+ \frac{pL}{2} \ExpSub{t}{\norm{x^{t+1} - y^{t+1}}^2} - \frac{p \theta_{t+1}^2 \bar{L}}{2} \ExpSub{t}{\norm{u^{t+1} - u^t}^2}.
  \end{align*}
\end{lemma}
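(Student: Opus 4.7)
The plan is to open with $L$-smoothness applied at $y^{t+1}$, giving
\[
f(x^{t+1}) - f(x^*) \leq f(y^{t+1}) - f(x^*) + \inp{\nabla f(y^{t+1})}{x^{t+1}-y^{t+1}} + \tfrac{L}{2}\norm{x^{t+1}-y^{t+1}}^2,
\]
and then to decompose the gradient inner product by inserting $x^*$ into the identity $x^{t+1} - y^{t+1} = \theta_{t+1}(u^{t+1}-y^{t+1}) + (1-\theta_{t+1})(z^t - y^{t+1})$, obtaining the three pieces $\theta_{t+1}\inp{\nabla f(y^{t+1})}{u^{t+1}-x^*}$, $\theta_{t+1}\inp{\nabla f(y^{t+1})}{x^*-y^{t+1}}$, and $(1-\theta_{t+1})\inp{\nabla f(y^{t+1})}{z^t-y^{t+1}}$. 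I would then apply $\mu$-strong convexity to the middle summand and the exact identity $\inp{\nabla f(y^{t+1})}{z^t-y^{t+1}} = f(z^t)-f(y^{t+1}) - D_f(z^t,y^{t+1})$ to the third. After collecting the function-value terms, the $f(y^{t+1})$ pieces cancel, leaving a momentum contribution $(1-\theta_{t+1})(f(z^t)-f(x^*))$, the negative Bregman $-(1-\theta_{t+1})D_f(z^t,y^{t+1})$, and a stray $-\tfrac{\theta_{t+1}\mu}{2}\norm{x^*-y^{t+1}}^2$.

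The core work is the remaining $\theta_{t+1}\ExpSub{t}{\inp{\nabla f(y^{t+1})}{u^{t+1}-x^*}}$, which I would split as $\theta_{t+1}\inp{g^{t+1}}{u^{t+1}-x^*} + \theta_{t+1}\inp{\nabla f(y^{t+1}) - g^{t+1}}{u^{t+1}-x^*}$. For the first piece I invoke Lemma~\ref{lemma:aux_opt} with $u = u^{t+1}$, $a = u^t$, $b = y^{t+1}$, $\alpha = (\bar{L}+\Gamma_t\mu)/\gamma_{t+1}$, $\beta = \mu$, and $x = x^*$; this produces exactly the telescoping pair $\tfrac{\bar{L}+\Gamma_t\mu}{2\gamma_{t+1}}\norm{u^t-x^*}^2 - \tfrac{\bar{L}+\Gamma_{t+1}\mu}{2\gamma_{t+1}}\norm{u^{t+1}-x^*}^2$ (since $\Gamma_t + \gamma_{t+1} = \Gamma_{t+1}$), a $+\tfrac{\theta_{t+1}\mu}{2}\norm{x^*-y^{t+1}}^2$ that cancels the stray term above, the non-positive $-\tfrac{\theta_{t+1}\mu}{2}\norm{u^{t+1}-y^{t+1}}^2$ to be dropped, and the crucial $-\tfrac{\theta_{t+1}(\bar{L}+\Gamma_t\mu)}{2\gamma_{t+1}}\norm{u^{t+1}-u^t}^2$.

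The hard part is the noise term $\theta_{t+1}\ExpSub{t}{\inp{\nabla f(y^{t+1}) - g^{t+1}}{u^{t+1}-x^*}}$: a direct Young's inequality leaves an unabsorbable positive multiple of $\norm{u^{t+1}-u^t}^2$. My plan is to avoid Young's entirely by solving the optimality condition for $u^{t+1}$ explicitly. Differentiating its defining objective gives $u^{t+1} = \bar u^{t+1} - \tfrac{\gamma_{t+1}}{\bar{L}+\Gamma_{t+1}\mu}(g^{t+1}-\nabla f(y^{t+1}))$, where $\bar u^{t+1} \eqdef \ExpSub{t}{u^{t+1}}$ is $\cF_t$-measurable (this uses $\ExpSub{t}{g^{t+1}} = \nabla f(y^{t+1})$, which follows from compressor unbiasedness together with $h^t = \tfrac{1}{n}\sum_i h_i^t$). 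Since $\bar u^{t+1}-x^*$ is $\cF_t$-measurable and the noise has zero conditional mean, the inner product collapses to the exact equality
\[
\theta_{t+1}\ExpSub{t}{\inp{\nabla f(y^{t+1}) - g^{t+1}}{u^{t+1}-x^*}} = \tfrac{\theta_{t+1}\gamma_{t+1}}{\bar{L}+\Gamma_{t+1}\mu}\ExpSub{t}{\norm{g^{t+1}-\nabla f(y^{t+1})}^2}.
\]
Lemma~\ref{lemma:learning_rates}(3) ensures $\tfrac{\theta_{t+1}\gamma_{t+1}}{\bar{L}+\Gamma_{t+1}\mu} \leq \tfrac{1}{\bar{L}}$, and plugging Lemma~\ref{lemma:aux_stoch_grad} then produces exactly the $\tfrac{2\omega}{n\bar{L}}\cdot\tfrac{1}{n}\sum \norm{h_i^t-\nabla f_i(z^t)}^2$ term and the $\tfrac{4\omega L_{\max}}{n\bar{L}} D_f(z^t, y^{t+1})$ contribution with the right constants.

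To finish, I weaken $-\tfrac{\theta_{t+1}(\bar{L}+\Gamma_t\mu)}{2\gamma_{t+1}}\norm{u^{t+1}-u^t}^2$ to $-\tfrac{\theta_{t+1}^2\bar{L}}{2}\norm{u^{t+1}-u^t}^2$ by a second invocation of Lemma~\ref{lemma:learning_rates}(3), assemble all bounds into one estimate for $\ExpSub{t}{f(x^{t+1})} - f(x^*)$, multiply by $p$, and combine with $(1-p)(f(z^t)-f(x^*))$ coming from the coin flip identity $\ExpSub{t}{f(z^{t+1})} = p\,\ExpSub{t}{f(x^{t+1})} + (1-p)f(z^t)$. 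The coefficients on $f(z^t)-f(x^*)$ collapse via $p(1-\theta_{t+1}) + (1-p) = 1 - p\theta_{t+1}$, and the Bregman coefficient becomes $p\bigl(\tfrac{4\omega L_{\max}}{n\bar{L}} + \theta_{t+1} - 1\bigr)$, matching the statement term by term. The single conceptual obstacle throughout is the choice to compute the noise cross-term exactly rather than via Young's inequality; this is what prevents a factor-two loss on $\norm{u^{t+1}-u^t}^2$ that would otherwise propagate through the Lyapunov analysis.
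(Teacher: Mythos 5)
Your proposal is correct and follows essentially the same route as the paper's proof: $L$-smoothness at $y^{t+1}$, Lemma~\ref{lemma:aux_opt} at $x=x^*$ for the telescoping and $-\norm{u^{t+1}-u^t}^2$ terms, the exact zero-mean computation of the noise cross-term via the closed form of $u^{t+1}$ (the paper does this with $u^{t+1}-y^{t+1}$ rather than $u^{t+1}-x^*$, which is the same calculation), Lemma~\ref{lemma:aux_stoch_grad}, the bound $\bar{L}\theta_{t+1}\gamma_{t+1}\le\bar{L}+\Gamma_t\mu$ from Lemma~\ref{lemma:learning_rates}, and the coin-flip identity at the end. The only difference is where $x^*$ is inserted in the decomposition, which is purely cosmetic bookkeeping.
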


\begin{proof}
  Using Assumption~\ref{ass:lipschitz_constant}, we have
\begin{align*}
  &f(x^{t+1}) - f(x^*) \leq f(y^{t+1}) - f(x^*) + \inp{\nabla f(y^{t+1})}{x^{t+1} - y^{t+1}} + \frac{L}{2} \norm{x^{t+1} - y^{t+1}}^2.
\end{align*}
Using the definition of $x^{t+1},$ we obtain
\begin{equation}
\begin{aligned}
  &f(x^{t+1}) - f(x^*)\leq (1 - \theta_{t+1}) \left(f(y^{t+1}) - f(x^*) + \inp{\nabla f(y^{t+1})}{z^t - y^{t+1}}\right) \\
  &\quad+ \theta_{t+1} \left(f(y^{t+1}) - f(x^*) + \inp{\nabla f(y^{t+1})}{u^{t+1} - y^{t+1}}\right) \\
  &\quad+ \frac{L}{2} \norm{x^{t+1} - y^{t+1}}^2 \\
  &= (1 - \theta_{t+1}) \left(f(y^{t+1}) - f(x^*) + \inp{\nabla f(y^{t+1})}{z^t - y^{t+1}}\right) \\
  &\quad+ \theta_{t+1} \left(f(y^{t+1}) - f(x^*) + \inp{g^{t+1}}{u^{t+1} - y^{t+1}}\right) \\
  &\quad+ \theta_{t+1} \left(\inp{\nabla f(y^{t+1}) - g^{t+1}}{u^{t+1} - y^{t+1}}\right) \\
  &\quad+ \frac{L}{2} \norm{x^{t+1} - y^{t+1}}^2,
  \label{eq:tmp_1}
\end{aligned}
\end{equation}
in the last inequality we add and subtract $g^{t+1}.$
Using the definition of $u^{t+1}$ and Lemma~\ref{lemma:aux_opt} with $x = x^*$, we have
\begin{align*}
  \inp{g^{t+1}}{u^{t+1} - y^{t+1}} &\leq \inp{g^{t+1}}{x^* - y^{t+1}} + \frac{\bar{L} + \Gamma_{t} \mu}{2 \gamma_{t+1}} \norm{x^* - u^t}^2 + \frac{\mu}{2} \norm{x^* - y^{t+1}}^2 \\
  &\quad - \frac{\bar{L} + \Gamma_{t} \mu}{2 \gamma_{t+1}} \norm{u^{t+1} - u^t}^2 - \frac{\mu}{2} \norm{u^t - y^{t+1}}^2 - \frac{\bar{L} + \Gamma_{t+1} \mu}{2 \gamma_{t+1}} \norm{x^* - u^{t+1}}^2.
\end{align*}
We use the fact that $\Gamma_{t+1} = \Gamma_{t} + \gamma_{t+1}.$ Since $\norm{u^t - y^{t+1}}^2 \geq 0,$ we have
\begin{align*}
  \inp{g^{t+1}}{u^{t+1} - y^{t+1}} &\leq \inp{g^{t+1}}{x^* - y^{t+1}} + \frac{\bar{L} + \Gamma_{t} \mu}{2 \gamma_{t+1}} \norm{x^* - u^t}^2 + \frac{\mu}{2} \norm{x^* - y^{t+1}}^2 \\
  &\quad - \frac{\bar{L} + \Gamma_{t} \mu}{2 \gamma_{t+1}} \norm{u^{t+1} - u^t}^2 - \frac{\bar{L} + \Gamma_{t+1} \mu}{2 \gamma_{t+1}} \norm{x^* - u^{t+1}}^2.
\end{align*}
By substituting this inequality to \eqref{eq:tmp_1}, we get
\begin{align*}
  &f(x^{t+1}) - f(x^*) \\
  &\leq (1 - \theta_{t+1}) \left(f(y^{t+1}) - f(x^*) + \inp{\nabla f(y^{t+1})}{z^t - y^{t+1}}\right) \\
  &\quad+ \theta_{t+1} \left(f(y^{t+1}) - f(x^*) + \inp{g^{t+1}}{x^* - y^{t+1}}\right) \\
  &\quad+ \theta_{t+1} \left(\frac{\bar{L} + \Gamma_{t} \mu}{2 \gamma_{t+1}} \norm{x^* - u^t}^2 + \frac{\mu}{2} \norm{x^* - y^{t+1}}^2 - \frac{\bar{L} + \Gamma_{t} \mu}{2 \gamma_{t+1}} \norm{u^{t+1} - u^t}^2 - \frac{\bar{L} + \Gamma_{t+1} \mu}{2 \gamma_{t+1}} \norm{x^* - u^{t+1}}^2\right) \\
  &\quad+ \theta_{t+1} \left(\inp{\nabla f(y^{t+1}) - g^{t+1}}{u^{t+1} - y^{t+1}}\right) \\
  &\quad+ \frac{L}{2} \norm{x^{t+1} - y^{t+1}}^2.
\end{align*}
Using $\mu$--strong convexity, we have
\begin{align*}
  f(x^*) \geq f(y^{t+1}) + \inp{\nabla f(y^{t+1})}{x^* - y^{t+1}} + \frac{\mu}{2} \norm{x^* - y^{t+1}}^2
\end{align*}
and
\begin{align*}
  &f(x^{t+1}) - f(x^*) \\
  &\leq (1 - \theta_{t+1}) \left(f(y^{t+1}) - f(x^*) + \inp{\nabla f(y^{t+1})}{z^t - y^{t+1}}\right) \\
  &\quad+ \theta_{t+1} \left(\inp{g^{t+1} - \nabla f(y^{t+1})}{x^* - y^{t+1}}\right) \\
  &\quad+ \theta_{t+1} \left(\frac{\bar{L} + \Gamma_{t} \mu}{2 \gamma_{t+1}} \norm{x^* - u^t}^2 - \frac{\bar{L} + \Gamma_{t} \mu}{2 \gamma_{t+1}} \norm{u^{t+1} - u^t}^2 - \frac{\bar{L} + \Gamma_{t+1} \mu}{2 \gamma_{t+1}} \norm{x^* - u^{t+1}}^2\right) \\
  &\quad+ \theta_{t+1} \left(\inp{\nabla f(y^{t+1}) - g^{t+1}}{u^{t+1} - y^{t+1}}\right) \\
  &\quad+ \frac{L}{2} \norm{x^{t+1} - y^{t+1}}^2.
\end{align*}
Let us take the conditional expectation $\ExpSub{t}{\cdot}$ conditioned on the randomness from the first $t$ iterations:
\begin{equation}
\begin{aligned}
  &\ExpSub{t}{f(x^{t+1}) - f(x^*)} \\
  &\leq (1 - \theta_{t+1}) \left(f(y^{t+1}) - f(x^*) + \inp{\nabla f(y^{t+1})}{z^t - y^{t+1}}\right) \\
  &\quad+ \theta_{t+1} \left(\inp{\ExpSub{t}{g^{t+1} - \nabla f(y^{t+1})}}{x^* - y^{t+1}}\right) \\
  &\quad+ \theta_{t+1} \left(\frac{\bar{L} + \Gamma_{t} \mu}{2 \gamma_{t+1}} \norm{x^* - u^t}^2 - \frac{\bar{L} + \Gamma_{t} \mu}{2 \gamma_{t+1}} \ExpSub{t}{\norm{u^{t+1} - u^t}^2} - \frac{\bar{L} + \Gamma_{t+1} \mu}{2 \gamma_{t+1}} \ExpSub{t}{\norm{x^* - u^{t+1}}^2}\right) \\
  &\quad+ \theta_{t+1} \ExpSub{t}{\inp{\nabla f(y^{t+1}) - g^{t+1}}{u^{t+1} - y^{t+1}}} \\
  &\quad+ \frac{L}{2} \ExpSub{t}{\norm{x^{t+1} - y^{t+1}}^2} \\
  &= (1 - \theta_{t+1}) \left(f(y^{t+1}) - f(x^*) + \inp{\nabla f(y^{t+1})}{z^t - y^{t+1}}\right) \\
  &\quad+ \theta_{t+1} \left(\frac{\bar{L} + \Gamma_{t} \mu}{2 \gamma_{t+1}} \norm{x^* - u^t}^2 - \frac{\bar{L} + \Gamma_{t} \mu}{2 \gamma_{t+1}} \ExpSub{t}{\norm{u^{t+1} - u^t}^2} - \frac{\bar{L} + \Gamma_{t+1} \mu}{2 \gamma_{t+1}} \ExpSub{t}{\norm{x^* - u^{t+1}}^2}\right) \\
  &\quad+ \theta_{t+1} \ExpSub{t}{\inp{\nabla f(y^{t+1}) - g^{t+1}}{u^{t+1} - y^{t+1}}} \\
  &\quad+ \frac{L}{2} \ExpSub{t}{\norm{x^{t+1} - y^{t+1}}^2},
  \label{eq:tmp_2}
\end{aligned}
\end{equation}
where use that $\ExpSub{t}{g^{t+1}} = \nabla f(y^{t+1}).$ We can find $u^{t+1}$ analytically and obtain that
\begin{align*}
  u^{t+1} = \frac{\bar{L} + \Gamma_{t} \mu}{\bar{L} + \Gamma_{t+1} \mu} u^t + \frac{\mu \gamma_{t+1}}{\bar{L} + \Gamma_{t+1} \mu} y^{t+1} - \frac{\gamma_{t+1}}{\bar{L} + \Gamma_{t+1} \mu} g^{t+1}.
\end{align*}
Therefore, using that $\ExpSub{t}{g^{t+1}} = \nabla f(y^{t+1})$ and $u^t$ and $y^{t+1}$ are conditionally nonrandom, we obtain
\begin{align}
  &\ExpSub{t}{\inp{\nabla f(y^{t+1}) - g^{t+1}}{u^{t+1} - y^{t+1}}} = \frac{\gamma_{t+1}}{\bar{L} + \Gamma_{t+1} \mu} \ExpSub{t}{\norm{g^{t+1} - \nabla f(y^{t+1})}^2}.
  \label{eq:tmp_3}
\end{align}
Combining \eqref{eq:nabla_g_f} from Lemma~\ref{lemma:aux_stoch_grad} with \eqref{eq:tmp_2} and \eqref{eq:tmp_3}, one can get
\begin{equation*}
  \begin{aligned}
    &\ExpSub{t}{f(x^{t+1}) - f(x^*)} \\
    &\leq (1 - \theta_{t+1}) \left(f(y^{t+1}) - f(x^*) + \inp{\nabla f(y^{t+1})}{z^t - y^{t+1}}\right) \\
    &\quad+ \theta_{t+1} \left(\frac{\bar{L} + \Gamma_{t} \mu}{2 \gamma_{t+1}} \norm{x^* - u^t}^2 - \frac{\bar{L} + \Gamma_{t} \mu}{2 \gamma_{t+1}} \ExpSub{t}{\norm{u^{t+1} - u^t}^2} - \frac{\bar{L} + \Gamma_{t+1} \mu}{2 \gamma_{t+1}} \ExpSub{t}{\norm{x^* - u^{t+1}}^2}\right) \\
    &\quad+ \frac{\theta_{t+1} \gamma_{t+1}}{\bar{L} + \Gamma_{t+1} \mu} \left(\frac{2 \omega}{n^2} \sum_{i=1}^n \norm{\nabla f_i(z^{t}) - h_i^t}^2 + \frac{4 \omega L_{\max}}{n} (f(z^t) - f(y^{t+1}) - \inp{\nabla f(y^{t+1})}{z^t - y^{t+1}})\right) \\
    &\quad+ \frac{L}{2} \ExpSub{t}{\norm{x^{t+1} - y^{t+1}}^2}.
  \end{aligned}
\end{equation*}
Using the notation $D_f(x, y) \eqdef f(x) - f(y) - \inp{\nabla f(y)}{x - y},$ we get
\begin{equation*}
  \begin{aligned}
    &\ExpSub{t}{f(x^{t+1}) - f(x^*)} \\
    &\leq (1 - \theta_{t+1}) \left(f(z^{t}) - f(x^*) - D_f(z^t, y^{t+1})\right) \\
    &\quad+ \theta_{t+1} \left(\frac{\bar{L} + \Gamma_{t} \mu}{2 \gamma_{t+1}} \norm{x^* - u^t}^2 - \frac{\bar{L} + \Gamma_{t} \mu}{2 \gamma_{t+1}} \ExpSub{t}{\norm{u^{t+1} - u^t}^2} - \frac{\bar{L} + \Gamma_{t+1} \mu}{2 \gamma_{t+1}} \ExpSub{t}{\norm{x^* - u^{t+1}}^2}\right) \\
    &\quad+ \frac{\theta_{t+1} \gamma_{t+1}}{\bar{L} + \Gamma_{t+1} \mu} \left(\frac{2 \omega}{n^2} \sum_{i=1}^n \norm{\nabla f_i(z^{t}) - h_i^t}^2 + \frac{4 \omega L_{\max}}{n} D_f(z^t, y^{t+1})\right) \\
    &\quad+ \frac{L}{2} \ExpSub{t}{\norm{x^{t+1} - y^{t+1}}^2} \\
    &= (1 - \theta_{t+1}) \left(f(z^{t}) - f(x^*)\right) \\
    &\quad + \frac{\theta_{t+1} \gamma_{t+1}}{\bar{L} + \Gamma_{t+1} \mu} \frac{2 \omega}{n} \left(\frac{1}{n}\sum_{i=1}^n \norm{\nabla f_i(z^{t}) - h_i^t}^2\right) \\
    &\quad+ \theta_{t+1} \left(\frac{\bar{L} + \Gamma_{t} \mu}{2 \gamma_{t+1}} \norm{x^* - u^t}^2 - \frac{\bar{L} + \Gamma_{t+1} \mu}{2 \gamma_{t+1}} \ExpSub{t}{\norm{x^* - u^{t+1}}^2}\right) \\
    &\quad+ \left(\frac{\theta_{t+1} \gamma_{t+1}}{\bar{L} + \Gamma_{t+1} \mu} \frac{4 \omega L_{\max}}{n} + \theta_{t+1} - 1 \right) D_f(z^t, y^{t+1}) \\
    &\quad+ \frac{L}{2} \ExpSub{t}{\norm{x^{t+1} - y^{t+1}}^2} - \frac{\theta_{t+1} \left(\bar{L} + \Gamma_{t} \mu\right)}{2 \gamma_{t+1}} \ExpSub{t}{\norm{u^{t+1} - u^t}^2}.
  \end{aligned}
\end{equation*}
In the last equality, we simply regrouped the terms.
Using the definition of $z^{t+1},$ we get
\begin{align*}
  &\ExpSub{t}{f(z^{t+1}) - f(x^*)} = p\ExpSub{t}{f(x^{t+1}) - f(x^*)} + (1 - p) \left(f(z^t) - f(x^*)\right) \\
  &\leq p(1 - \theta_{t+1}) \left(f(z^{t}) - f(x^*)\right) \\
  &\quad + p\frac{\theta_{t+1} \gamma_{t+1}}{\bar{L} + \Gamma_{t+1} \mu} \frac{2 \omega}{n} \left(\frac{1}{n}\sum_{i=1}^n \norm{\nabla f_i(z^{t}) - h_i^t}^2\right) \\
  &\quad+ p\theta_{t+1} \left(\frac{\bar{L} + \Gamma_{t} \mu}{2 \gamma_{t+1}} \norm{x^* - u^t}^2 - \frac{\bar{L} + \Gamma_{t+1} \mu}{2 \gamma_{t+1}} \ExpSub{t}{\norm{x^* - u^{t+1}}^2}\right) \\
  &\quad+ p\left(\frac{\theta_{t+1} \gamma_{t+1}}{\bar{L} + \Gamma_{t+1} \mu} \frac{4 \omega L_{\max}}{n} + \theta_{t+1} - 1 \right) D_f(z^t, y^{t+1}) \\
  &\quad+ \frac{pL}{2} \ExpSub{t}{\norm{x^{t+1} - y^{t+1}}^2} - \frac{p \theta_{t+1} \left(\bar{L} + \Gamma_{t} \mu\right)}{2 \gamma_{t+1}} \ExpSub{t}{\norm{u^{t+1} - u^t}^2} + (1 - p) \left(f(z^t) - f(x^*)\right) \\
  &= (1 - p\theta_{t+1}) \left(f(z^{t}) - f(x^*)\right) \\
  &\quad + p\frac{\theta_{t+1} \gamma_{t+1}}{\bar{L} + \Gamma_{t+1} \mu} \frac{2 \omega}{n} \left(\frac{1}{n}\sum_{i=1}^n \norm{\nabla f_i(z^{t}) - h_i^t}^2\right) \\
  &\quad+ p\theta_{t+1} \left(\frac{\bar{L} + \Gamma_{t} \mu}{2 \gamma_{t+1}} \norm{x^* - u^t}^2 - \frac{\bar{L} + \Gamma_{t+1} \mu}{2 \gamma_{t+1}} \ExpSub{t}{\norm{x^* - u^{t+1}}^2}\right) \\
  &\quad+ p\left(\frac{\theta_{t+1} \gamma_{t+1}}{\bar{L} + \Gamma_{t+1} \mu} \frac{4 \omega L_{\max}}{n} + \theta_{t+1} - 1 \right) D_f(z^t, y^{t+1}) \\
  &\quad+ \frac{pL}{2} \ExpSub{t}{\norm{x^{t+1} - y^{t+1}}^2} - \frac{p \theta_{t+1} \left(\bar{L} + \Gamma_{t} \mu\right)}{2 \gamma_{t+1}} \ExpSub{t}{\norm{u^{t+1} - u^t}^2}.
\end{align*}
In the last equality, we grouped the terms with $f(z^{t}) - f(x^*).$ In Algorithm~\ref{alg:learning_rates}, we choose the learning rates so that (see Lemma~\ref{lemma:learning_rates})
\begin{align*}
  \bar{L} \theta_{t+1} \gamma_{t+1} \leq \bar{L} + \Gamma_{t} \mu.
\end{align*}
Since $\Gamma_{t+1} \geq \Gamma_{t}$ for all $t \in \N_0,$ thus
\begin{align*}
  \frac{\theta_{t+1} \gamma_{t+1}}{\bar{L} + \Gamma_{t+1} \mu} \leq \frac{\theta_{t+1} \gamma_{t+1}}{\bar{L} + \Gamma_{t} \mu} \leq \frac{1}{\bar{L}}
\end{align*}
and
\begin{align*}
  &\ExpSub{t}{f(z^{t+1}) - f(x^*)} \\
  &\leq (1 - p\theta_{t+1}) \left(f(z^{t}) - f(x^*)\right) \\
  &\quad + p\frac{2 \omega}{n \bar{L}} \left(\frac{1}{n}\sum_{i=1}^n \norm{\nabla f_i(z^{t}) - h_i^t}^2\right) \\
  &\quad+ p\theta_{t+1} \left(\frac{\bar{L} + \Gamma_{t} \mu}{2 \gamma_{t+1}} \norm{x^* - u^t}^2 - \frac{\bar{L} + \Gamma_{t+1} \mu}{2 \gamma_{t+1}} \ExpSub{t}{\norm{x^* - u^{t+1}}^2}\right) \\
  &\quad+ p\left(\frac{4 \omega L_{\max}}{n \bar{L}} + \theta_{t+1} - 1 \right) D_f(z^t, y^{t+1}) \\
  &\quad+ \frac{pL}{2} \ExpSub{t}{\norm{x^{t+1} - y^{t+1}}^2} - \frac{p \theta_{t+1}^2 \bar{L}}{2} \ExpSub{t}{\norm{u^{t+1} - u^t}^2}.
\end{align*}
\end{proof}

\subsection{Construction of the Lyapunov function}

In this section, we provide lemmas that will help us to construct a Lyapunov function.

\begin{lemma}
  \label{lemma:aux_control}
  Suppose that Assumptions~\ref{ass:lipschitz_constant}, \ref{ass:workers_lipschitz_constant}, \ref{ass:convex} and \ref{ass:unbiased_compressors} hold. The parameter $\beta \leq \nicefrac{1}{\omega + 1}.$ Then, for Algorithm~\ref{alg:bi_diana}, the following inequality holds:
  \begin{align}
    \label{eq:aux_control}
    &\ExpSub{t}{\frac{1}{n}\sum_{i=1}^n \norm{h_i^{t+1} - \nabla f_i(z^{t+1})}^2} \\
    &\leq 8 p \left(1 + \frac{p}{\beta}\right) L_{\max} D_f(z^t, y^{t+1}) + 4 p \left(1 + \frac{p}{\beta}\right) \widehat{L}^2 \ExpSub{t}{\norm{x^{t+1} - y^{t+1}}^2} + \left(1 - \frac{\beta}{2}\right) \frac{1}{n}\sum_{i=1}^n \norm{h^{t}_i - \nabla f_i(z^t)}^2.
  \end{align}
\end{lemma}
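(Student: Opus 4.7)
\textbf{Proof plan for Lemma~\ref{lemma:aux_control}.} The plan is to chain three estimates: a DIANA-style contraction conditional on $z^{t+1}$, a decomposition over the Bernoulli choice of $z^{t+1}$, and a Young + smoothness expansion of the resulting gradient difference. The main obstacle is that $z^{t+1}$ itself is random, so the contraction factor $1-\beta$ must absorb the extra Young slack while leaving only $(1-\beta/2)$ on the old $h$-term.

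\textbf{Step 1 (Conditional DIANA contraction).} Fix $z^{t+1}$ and the filtration through step $t$. Let $X_i = \nabla f_i(z^{t+1}) - h_i^t$, so $h_i^{t+1} - \nabla f_i(z^{t+1}) = -X_i + \beta\,\cC_i^{D,z}(X_i)$. Expand the square, use unbiasedness $\ExpSub{t}{\cC_i^{D,z}(X_i)\mid z^{t+1}} = X_i$ together with the variance decomposition \eqref{eq:vardecomp} and Definition~\ref{def:unbiased_compression}, obtaining
\begin{align*}
\ExpCond{\norm{h_i^{t+1} - \nabla f_i(z^{t+1})}^2}{z^{t+1}}
\leq \bigl(1 - 2\beta + \beta^2(\omega+1)\bigr)\norm{X_i}^2 \leq (1-\beta)\norm{\nabla f_i(z^{t+1}) - h_i^t}^2,
\end{align*}
where the last inequality uses $\beta \leq \nicefrac{1}{(\omega+1)}$.

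\textbf{Step 2 (Averaging over $z^{t+1}$).} Since $z^{t+1} = x^{t+1}$ with probability $p$ and $z^{t+1} = z^t$ otherwise, the tower property yields
\begin{align*}
\ExpSub{t}{\tfrac{1}{n}\sum_i \norm{h_i^{t+1} - \nabla f_i(z^{t+1})}^2}
\leq p(1-\beta)\,\ExpSub{t}{\tfrac{1}{n}\sum_i\norm{\nabla f_i(x^{t+1}) - h_i^t}^2}
+ (1-p)(1-\beta)\,\tfrac{1}{n}\sum_i\norm{\nabla f_i(z^t) - h_i^t}^2.
\end{align*}

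\textbf{Step 3 (Young with a carefully tuned parameter).} Apply \eqref{eq:young} with $s = \nicefrac{\beta}{(2p)}$ to split $\norm{\nabla f_i(x^{t+1}) - h_i^t}^2$ around $\nabla f_i(z^t)$. This yields a coefficient $(1-\beta)[p(1+s) + (1-p)] = (1-\beta)(1+ps) \leq 1 - \nicefrac{\beta}{2}$ in front of $\tfrac{1}{n}\sum_i \norm{\nabla f_i(z^t) - h_i^t}^2$, while the remainder carries the factor $p(1-\beta)(1+\nicefrac{1}{s}) = p(1-\beta)(1 + \nicefrac{2p}{\beta}) \leq 2p(1+\nicefrac{p}{\beta})$ in front of $\ExpSub{t}{\tfrac{1}{n}\sum_i\norm{\nabla f_i(x^{t+1}) - \nabla f_i(z^t)}^2}$.

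\textbf{Step 4 (Bounding the gradient gap).} Split the residual through $y^{t+1}$ using \eqref{eq:young_2}: one piece is controlled by Assumption~\ref{ass:workers_lipschitz_constant}, giving $2\widehat{L}^2\norm{x^{t+1}-y^{t+1}}^2$ after averaging, and the other by Lemma~\ref{lemma:lipt_func} applied per-worker with $L_i \leq L_{\max}$, giving $4L_{\max} D_f(z^t, y^{t+1})$ after averaging (since $f = \tfrac{1}{n}\sum_i f_i$). Plugging these into Step~3 and collecting the prefactors yields exactly $8p(1+\nicefrac{p}{\beta})L_{\max} D_f(z^t,y^{t+1}) + 4p(1+\nicefrac{p}{\beta})\widehat{L}^2\ExpSub{t}{\norm{x^{t+1}-y^{t+1}}^2} + (1-\nicefrac{\beta}{2})\tfrac{1}{n}\sum_i \norm{h_i^t - \nabla f_i(z^t)}^2$, completing the proof.
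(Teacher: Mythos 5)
Your proposal is correct and follows essentially the same route as the paper's proof: a conditional DIANA-style contraction using unbiasedness and $\beta \leq \nicefrac{1}{\omega+1}$, splitting over the Bernoulli update of $z^{t+1}$, Young's inequality with the same effective parameter (your $s = \nicefrac{\beta}{2p}$ is the paper's $\nicefrac{2p}{\beta}$ in the reciprocal convention), and a final split through $y^{t+1}$ handled by Lemma~\ref{lemma:lipt_func} with $L_{\max}$ and Assumption~\ref{ass:workers_lipschitz_constant} with $\widehat{L}$. The coefficient bookkeeping ($(1-\beta)(1+\nicefrac{\beta}{2}) \leq 1-\nicefrac{\beta}{2}$ and $p(1-\beta)(1+\nicefrac{2p}{\tau{\beta}}) \leq 2p(1+\nicefrac{p}{\beta})$, then factors $4$ and $8$) matches the paper exactly.
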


\begin{proof}
  Using the definition of $h^{t+1}_i$, we have
  \begin{align*}
    &\ExpSub{t}{\frac{1}{n}\sum_{i=1}^n \norm{h_i^{t+1} - \nabla f_i(z^{t+1})}^2} \\
    &=\ExpSub{t}{\frac{1}{n}\sum_{i=1}^n \norm{h_i^{t} + \beta \cC_i^{D, z}(\nabla f_i(z^{t+1}) - h_i^t) - \nabla f_i(z^{t+1})}^2} \\
    &=\ExpSub{t}{\frac{1}{n}\sum_{i=1}^n \norm{h^{t}_i - \nabla f_i(z^{t+1})}^2} \\
    &\quad + \ExpSub{t}{\frac{2 \beta}{n}\sum_{i=1}^n \inp{h^{t}_i - \nabla f_i(z^{t+1})}{\cC_i^{D, z}(\nabla f_i(z^{t+1}) - h_i^t)} + \frac{\beta^2}{n}\sum_{i=1}^n \norm{\cC_i^{D, z}(\nabla f_i(z^{t+1}) - h_i^t)}^2}.
  \end{align*}
  Note that $\ExpSub{\cC}{\cC_i^{D, z}(\nabla f_i(z^{t+1}) - h_i^t)} = \nabla f_i(z^{t+1}) - h_i^t$ and $$\ExpSub{\cC}{\norm{\cC_i^{D, z}(\nabla f_i(z^{t+1}) - h_i^t)}^2} \leq \left(\omega + 1\right)\norm{\nabla f_i(z^{t+1}) - h_i^t}^2,$$
  where $\ExpSub{\cC}{\cdot}$ is a conditional expectation that is conditioned on $z^{t+1}$ and $h^t_i.$ Therefore, 
  \begin{align*}
    &\ExpSub{t}{\frac{1}{n}\sum_{i=1}^n \norm{h_i^{t+1} - \nabla f_i(z^{t+1})}^2} \\
    &\leq\ExpSub{t}{\frac{1}{n}\sum_{i=1}^n \norm{h^{t}_i - \nabla f_i(z^{t+1})}^2} \\
    &\quad + \ExpSub{t}{\frac{2 \beta}{n}\sum_{i=1}^n \inp{h^{t}_i - \nabla f_i(z^{t+1})}{\nabla f_i(z^{t+1}) - h_i^t} + \frac{\beta^2 (\omega + 1)}{n}\sum_{i=1}^n \norm{\nabla f_i(z^{t+1}) - h_i^t}^2} \\
    &=\left(1 - 2 \beta + \beta^2 (\omega + 1)\right)\ExpSub{t}{\frac{1}{n}\sum_{i=1}^n \norm{h^{t}_i - \nabla f_i(z^{t+1})}^2}. \\
  \end{align*}
  Since $\beta \leq \nicefrac{1}{\omega + 1},$ we have
  \begin{align*}
    &\ExpSub{t}{\frac{1}{n}\sum_{i=1}^n \norm{h_i^{t+1} - \nabla f_i(z^{t+1})}^2} \leq\left(1 - \beta\right)\ExpSub{t}{\frac{1}{n}\sum_{i=1}^n \norm{h^{t}_i - \nabla f_i(z^{t+1})}^2}. \\
  \end{align*}
  Next, we use the definition of $z^{t+1}$ and obtain
  \begin{align*}
    &\ExpSub{t}{\frac{1}{n}\sum_{i=1}^n \norm{h_i^{t+1} - \nabla f_i(z^{t+1})}^2} \\
    &\leq\left(1 - \beta\right) p \ExpSub{t}{\frac{1}{n}\sum_{i=1}^n \norm{h^{t}_i - \nabla f_i(x^{t+1})}^2} + \left(1 - \beta\right) (1 - p) \frac{1}{n}\sum_{i=1}^n \norm{h^{t}_i - \nabla f_i(z^{t})}^2 \\
    &\overset{\eqref{eq:young}}{\leq}\left(1 + \frac{2 p}{\beta}\right)\left(1 - \beta\right) p \ExpSub{t}{\frac{1}{n}\sum_{i=1}^n \norm{f_i(z^t) - \nabla f_i(x^{t+1})}^2} + \left(1 + \frac{\beta}{2 p}\right)\left(1 - \beta\right) p \frac{1}{n}\sum_{i=1}^n \norm{h^{t}_i - \nabla f_i(z^t)}^2 \\
    &\quad + \left(1 - \beta\right) (1 - p) \ExpSub{t}{\frac{1}{n}\sum_{i=1}^n \norm{h^{t}_i - \nabla f_i(z^{t})}^2} \\
    &=\left(1 + \frac{2 p}{\beta}\right) \left(1 - \beta\right) p \ExpSub{t}{\frac{1}{n}\sum_{i=1}^n \norm{f_i(z^t) - \nabla f_i(x^{t+1})}^2} + \left(1 - \beta\right) \left(1 + \frac{\beta}{2}\right) \frac{1}{n}\sum_{i=1}^n \norm{h^{t}_i - \nabla f_i(z^t)}^2.
  \end{align*}
  Using $1 - \beta \leq 1,$ \eqref{eq:young_2} and \eqref{eq:ineq1}, we get
  \begin{align*}
    &\ExpSub{t}{\frac{1}{n}\sum_{i=1}^n \norm{h_i^{t+1} - \nabla f_i(z^{t+1})}^2} \\
    &\leq 4 p \left(1 + \frac{p}{\beta}\right) \frac{1}{n}\sum_{i=1}^n \norm{f_i(z^t) - \nabla f_i(y^{t+1})}^2 + 4 p \left(1 + \frac{p}{\beta}\right) \ExpSub{t}{\frac{1}{n}\sum_{i=1}^n \norm{f_i(x^{t+1}) - \nabla f_i(y^{t+1})}^2} \\
    &\quad + \left(1 - \frac{\beta}{2}\right) \frac{1}{n}\sum_{i=1}^n \norm{h^{t}_i - \nabla f_i(z^t)}^2.
  \end{align*}
  From Assumptions~\ref{ass:workers_lipschitz_constant} and \ref{ass:convex} and Lemma~\ref{lemma:lipt_func}, we obtain
  \begin{align*}
    &\ExpSub{t}{\frac{1}{n}\sum_{i=1}^n \norm{h_i^{t+1} - \nabla f_i(z^{t+1})}^2} \\
    &\leq 8 p \left(1 + \frac{p}{\beta}\right) L_{\max} \left(f(z^t) - f(y^{t+1}) - \inp{\nabla f(y^{t+1})}{z^t - y^{t+1}}\right) + 4 p \left(1 + \frac{p}{\beta}\right) \widehat{L}^2 \ExpSub{t}{\norm{x^{t+1} - y^{t+1}}^2} \\
    &\quad + \left(1 - \frac{\beta}{2}\right) \frac{1}{n}\sum_{i=1}^n \norm{h^{t}_i - \nabla f_i(z^t)}^2 \\
    &= 8 p \left(1 + \frac{p}{\beta}\right) L_{\max} D_f(z^t, y^{t+1}) + 4 p \left(1 + \frac{p}{\beta}\right) \widehat{L}^2 \ExpSub{t}{\norm{x^{t+1} - y^{t+1}}^2} + \left(1 - \frac{\beta}{2}\right) \frac{1}{n}\sum_{i=1}^n \norm{h^{t}_i - \nabla f_i(z^t)}^2.
  \end{align*}
\end{proof}

\begin{lemma}
  \label{lemma:bi_fast_diana_iterates}
  Suppose that Assumptions~\ref{ass:lipschitz_constant}, \ref{ass:workers_lipschitz_constant}, \ref{ass:convex} and \ref{ass:unbiased_compressors} hold. Then, for Algorithm~\ref{alg:bi_diana}, the following inequality holds:
  \begin{equation}
  \begin{aligned}
    \label{eq:bi_fast_diana_iterates}
    &\ExpSub{t}{\norm{w^{t+1} - u^{t+1}}^2} \leq \left(1 - \frac{\alpha}{2}\right) \norm{w^t - u^t}^2  + \frac{4}{\alpha} \left(\frac{\gamma_{t+1}}{\bar{L} + \Gamma_{t+1} \mu}\right)^2 \norm{k^t - \nabla f(z^{t})}^2 \\
  &\quad + \frac{2 \omega}{n} \left(\frac{\gamma_{t+1}}{\bar{L} + \Gamma_{t+1} \mu}\right)^2\frac{1}{n}\sum_{i=1}^n \norm{\nabla f_i(z^{t}) - h_i^t}^2 + \left(\frac{\gamma_{t+1}}{\bar{L} + \Gamma_{t+1} \mu}\right)^2\left(\frac{4 \omega L_{\max}}{n}  + \frac{8L}{\alpha}\right)D_f(z^t, y^{t+1}).
  \end{aligned}
  \end{equation}
\end{lemma}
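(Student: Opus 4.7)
\textbf{Proof proposal for Lemma~\ref{lemma:bi_fast_diana_iterates}.}
My plan is to start from the crucial identity
\[
w^{t+1} - u^{t+1} \;=\; q^{t+1} + \cC^{P}(u^{t+1}-q^{t+1}) - u^{t+1} \;=\; \cC^{P}(u^{t+1}-q^{t+1}) - (u^{t+1}-q^{t+1}),
\]
so that the biased compressor property in Definition~\ref{def:biased_compression} immediately yields
$\ExpSub{\cC^{P}}{\norm{w^{t+1}-u^{t+1}}^{2}}\le (1-\alpha)\norm{u^{t+1}-q^{t+1}}^{2}$.
Next, I would solve the strongly convex proximal subproblems defining $u^{t+1}$ (Line~\ref{line:bi_diana:y} onward) and $q^{t+1}$ (Line~\ref{line:bi_diana:q}) in closed form. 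They have the same quadratic structure, so subtracting them gives
\[
u^{t+1}-q^{t+1} \;=\; \frac{\bar{L}+\Gamma_{t}\mu}{\bar{L}+\Gamma_{t+1}\mu}(u^{t}-w^{t}) \;-\; \frac{\gamma_{t+1}}{\bar{L}+\Gamma_{t+1}\mu}\bigl(g^{t+1}-k^{t}\bigr),
\]
where the $y^{t+1}$ and $\mu$-regularization terms cancel. Write $\rho \eqdef \gamma_{t+1}/(\bar{L}+\Gamma_{t+1}\mu)$ and note that $(\bar{L}+\Gamma_{t}\mu)/(\bar{L}+\Gamma_{t+1}\mu)\le 1$ because $\Gamma_{t+1}\ge\Gamma_{t}$ by Lemma~\ref{lemma:learning_rates}.

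The decisive step, and the one where a naive split breaks the coefficients, is to \emph{first} peel off the zero-mean stochastic part of $g^{t+1}$ via the variance decomposition~\eqref{eq:vardecomp} applied conditionally on the iteration-$t$ history. Since $\ExpSub{t}{g^{t+1}}=\nabla f(y^{t+1})$ and all other quantities above are $\cF_{t}$-measurable, this yields
\[
\ExpSub{t}{\norm{u^{t+1}-q^{t+1}}^{2}} = \Bigl\|\tfrac{\bar{L}+\Gamma_{t}\mu}{\bar{L}+\Gamma_{t+1}\mu}(u^{t}-w^{t}) - \rho\bigl(\nabla f(y^{t+1})-k^{t}\bigr)\Bigr\|^{2} + \rho^{2}\,\ExpSub{t}{\norm{g^{t+1}-\nabla f(y^{t+1})}^{2}}.
\]
Only the first term is then fed through Young's inequality~\eqref{eq:young} with $s=\alpha/2$; multiplying by $(1-\alpha)$ and using \eqref{eq:ineq1} and \eqref{eq:ineq2} produces the clean $(1-\alpha/2)\norm{u^{t}-w^{t}}^{2}$ coefficient together with a $(2/\alpha)\rho^{2}\norm{\nabla f(y^{t+1})-k^{t}}^{2}$ contribution. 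The second, variance term is simply dominated by dropping $(1-\alpha)\le 1$, so \emph{no} factor $1/\alpha$ ever multiplies the $\omega$-terms. This is the key structural point of the proof: swapping the order (Young's first, variance decomposition second) inflates the $\omega$-coefficients by $2/\alpha$.

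Finishing is now routine. I would bound $\norm{\nabla f(y^{t+1})-k^{t}}^{2}\le 2\norm{\nabla f(y^{t+1})-\nabla f(z^{t})}^{2} + 2\norm{\nabla f(z^{t})-k^{t}}^{2}$ by \eqref{eq:young_2}, then use Lemma~\ref{lemma:lipt_func} (with $f$ being $L$-smooth and convex) to convert $\norm{\nabla f(y^{t+1})-\nabla f(z^{t})}^{2}\le 2L\,D_{f}(z^{t},y^{t+1})$. Multiplying by $2/\alpha$ delivers exactly the $\frac{4}{\alpha}\rho^{2}\norm{k^{t}-\nabla f(z^{t})}^{2}$ and $\frac{8L}{\alpha}\rho^{2}D_{f}(z^{t},y^{t+1})$ terms in the statement. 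Finally, I would invoke Lemma~\ref{lemma:aux_stoch_grad} to bound $\ExpSub{t}{\norm{g^{t+1}-\nabla f(y^{t+1})}^{2}}$ by $\frac{2\omega}{n^{2}}\sum_{i}\norm{\nabla f_{i}(z^{t})-h_{i}^{t}}^{2} + \frac{4\omega L_{\max}}{n}D_{f}(z^{t},y^{t+1})$; multiplying by $\rho^{2}$ supplies the remaining two terms and completes the matching with \eqref{eq:bi_fast_diana_iterates}. The main obstacle is purely bookkeeping: getting the coefficient $\frac{2\omega}{n}$ (rather than $\frac{4\omega}{\alpha n}$) hinges entirely on performing the variance decomposition \emph{before} Young's inequality, as described above.
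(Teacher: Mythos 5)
Your proposal is correct and follows essentially the same route as the paper's proof: compressor contraction, closed-form expressions for $u^{t+1}$ and $q^{t+1}$, variance decomposition \eqref{eq:vardecomp} applied before Young's inequality (so the $\omega$-terms never pick up a $1/\alpha$ factor), then \eqref{eq:young_2}, Lemma~\ref{lemma:lipt_func}, Lemma~\ref{lemma:aux_stoch_grad}, and $\Gamma_{t+1}\ge\Gamma_t$ to clean up the leading coefficient. No gaps.
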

\begin{proof}
  Using the definition of $w^{t+1}$ and Definition~\ref{def:biased_compression}, we get the following inequality
\begin{align*}
  &\ExpSub{t}{\norm{w^{t+1} - u^{t+1}}^2} =\ExpSub{t}{\norm{\cC^{P}\left(u^{t+1} - q^{t+1}\right) - (u^{t+1} - q^{t+1})}^2} \leq (1 - \alpha)\ExpSub{t}{\norm{u^{t+1} - q^{t+1}}^2}.
\end{align*}
We can find the analytical formulas for $u^{t+1}$ and $q^{t+1},$ and obtain that
\begin{align*}
  u^{t+1} = \frac{\bar{L} + \Gamma_{t} \mu}{\bar{L} + \Gamma_{t+1} \mu} u^t + \frac{\mu \gamma_{t+1}}{\bar{L} + \Gamma_{t+1} \mu} y^{t+1} - \frac{\gamma_{t+1}}{\bar{L} + \Gamma_{t+1} \mu} g^{t+1}
\end{align*}
and 
\begin{align*}
  q^{t+1} = \frac{\bar{L} + \Gamma_{t} \mu}{\bar{L} + \Gamma_{t+1} \mu} w^t + \frac{\mu \gamma_{t+1}}{\bar{L} + \Gamma_{t+1} \mu} y^{t+1} - \frac{\gamma_{t+1}}{\bar{L} + \Gamma_{t+1} \mu} k^t.
\end{align*}
Therefore,
\begin{align*}
  &\ExpSub{t}{\norm{w^{t+1} - u^{t+1}}^2} \\
  &\leq (1 - \alpha)\ExpSub{t}{\norm{\frac{\bar{L} + \Gamma_{t} \mu}{\bar{L} + \Gamma_{t+1} \mu} (w^t - u^t) - \frac{\gamma_{t+1}}{\bar{L} + \Gamma_{t+1} \mu} (k^t - g^{t+1})}^2} \\
  &\overset{\eqref{eq:vardecomp}}{=} (1 - \alpha)\norm{\frac{\bar{L} + \Gamma_{t} \mu}{\bar{L} + \Gamma_{t+1} \mu} (w^t - u^t) - \frac{\gamma_{t+1}}{\bar{L} + \Gamma_{t+1} \mu} (k^t - \nabla f(y^{t+1}))}^2 \\
  &\quad + (1 - \alpha)\left(\frac{\gamma_{t+1}}{\bar{L} + \Gamma_{t+1} \mu}\right)^2\ExpSub{t}{\norm{g^{t+1} - \nabla f(y^{t+1})}^2} \\
  &\overset{\eqref{eq:young}}{\leq} \left(1 - \frac{\alpha}{2}\right) \left(\frac{\bar{L} + \Gamma_{t} \mu}{\bar{L} + \Gamma_{t+1} \mu}\right)^2\norm{w^t - u^t}^2 + \frac{2}{\alpha} \left(\frac{\gamma_{t+1}}{\bar{L} + \Gamma_{t+1} \mu}\right)^2 \norm{k^t - \nabla f(y^{t+1})}^2 \\
  &\quad + (1 - \alpha)\left(\frac{\gamma_{t+1}}{\bar{L} + \Gamma_{t+1} \mu}\right)^2\ExpSub{t}{\norm{g^{t+1} - \nabla f(y^{t+1})}^2} \\
  &\overset{\eqref{eq:young_2}}{\leq} \left(1 - \frac{\alpha}{2}\right) \left(\frac{\bar{L} + \Gamma_{t} \mu}{\bar{L} + \Gamma_{t+1} \mu}\right)^2\norm{w^t - u^t}^2 \\
  &\quad + \frac{4}{\alpha} \left(\frac{\gamma_{t+1}}{\bar{L} + \Gamma_{t+1} \mu}\right)^2 \norm{k^t - \nabla f(z^{t})}^2 \\
  &\quad + \frac{4}{\alpha} \left(\frac{\gamma_{t+1}}{\bar{L} + \Gamma_{t+1} \mu}\right)^2 \norm{\nabla f(z^t) - \nabla f(y^{t+1})}^2 \\
  &\quad + (1 - \alpha)\left(\frac{\gamma_{t+1}}{\bar{L} + \Gamma_{t+1} \mu}\right)^2\ExpSub{t}{\norm{g^{t+1} - \nabla f(y^{t+1})}^2}.
\end{align*}
One can substitute \eqref{eq:nabla_g_f} from Lemma~\ref{lemma:aux_stoch_grad} to the last inequality and get
\begin{align*}
  &\ExpSub{t}{\norm{w^{t+1} - u^{t+1}}^2} \\
  &\leq \left(1 - \frac{\alpha}{2}\right) \left(\frac{\bar{L} + \Gamma_{t} \mu}{\bar{L} + \Gamma_{t+1} \mu}\right)^2\norm{w^t - u^t}^2 \\
  &\quad + \frac{4}{\alpha} \left(\frac{\gamma_{t+1}}{\bar{L} + \Gamma_{t+1} \mu}\right)^2 \norm{k^t - \nabla f(z^{t})}^2 \\
  &\quad + \frac{4}{\alpha} \left(\frac{\gamma_{t+1}}{\bar{L} + \Gamma_{t+1} \mu}\right)^2 \norm{\nabla f(z^t) - \nabla f(y^{t+1})}^2 \\
  &\quad + (1 - \alpha)\left(\frac{\gamma_{t+1}}{\bar{L} + \Gamma_{t+1} \mu}\right)^2\left(\frac{2 \omega}{n^2} \sum_{i=1}^n \norm{\nabla f_i(z^{t}) - h_i^t}^2 + \frac{4 \omega L_{\max}}{n} D_f(z^t, y^{t+1})\right).
\end{align*}
Using Lemma~\ref{lemma:lipt_func} and $1 - \alpha \leq 1$, we obtain
\begin{align*}
  &\ExpSub{t}{\norm{w^{t+1} - u^{t+1}}^2} \\
  &\leq \left(1 - \frac{\alpha}{2}\right) \left(\frac{\bar{L} + \Gamma_{t} \mu}{\bar{L} + \Gamma_{t+1} \mu}\right)^2\norm{w^t - u^t}^2 \\
  &\quad + \frac{4}{\alpha} \left(\frac{\gamma_{t+1}}{\bar{L} + \Gamma_{t+1} \mu}\right)^2 \norm{k^t - \nabla f(z^{t})}^2 \\
  &\quad + \frac{2 \omega}{n} \left(\frac{\gamma_{t+1}}{\bar{L} + \Gamma_{t+1} \mu}\right)^2\frac{1}{n}\sum_{i=1}^n \norm{\nabla f_i(z^{t}) - h_i^t}^2 + \left(\frac{\gamma_{t+1}}{\bar{L} + \Gamma_{t+1} \mu}\right)^2\left(\frac{4 \omega L_{\max}}{n}  + \frac{8L}{\alpha}\right)D_f(z^t, y^{t+1}) \\
  &\leq \left(1 - \frac{\alpha}{2}\right) \norm{w^t - u^t}^2 \\
  &\quad + \frac{4}{\alpha} \left(\frac{\gamma_{t+1}}{\bar{L} + \Gamma_{t+1} \mu}\right)^2 \norm{k^t - \nabla f(z^{t})}^2 \\
  &\quad + \frac{2 \omega}{n} \left(\frac{\gamma_{t+1}}{\bar{L} + \Gamma_{t+1} \mu}\right)^2\frac{1}{n}\sum_{i=1}^n \norm{\nabla f_i(z^{t}) - h_i^t}^2 + \left(\frac{\gamma_{t+1}}{\bar{L} + \Gamma_{t+1} \mu}\right)^2\left(\frac{4 \omega L_{\max}}{n}  + \frac{8L}{\alpha}\right)D_f(z^t, y^{t+1}),
\end{align*}
where we use that $\Gamma_{t+1} \geq  \Gamma_{t}$ for all $t \geq 0.$
\end{proof}

\begin{lemma}
  \label{lemma:bi_fast_diana_k}
  Suppose that Assumptions~\ref{ass:lipschitz_constant} and \ref{ass:convex} hold.
   Then, for Algorithm~\ref{alg:bi_diana}, the following inequality holds:
   \begin{equation}
   \begin{aligned}
    &\ExpSub{t}{\norm{k^{t+1} - \nabla f(z^{t+1})}^2} \\
    &\leq 2 p\ExpSub{t}{\norm{v^{t} - \nabla f(z^{t})}^2} + 8 p L D_f(z^{t}, y^{t+1}) + 4 p L^2 \ExpSub{t}{\norm{x^{t+1} - y^{t+1}}^2} + (1 - p)\norm{k^{t} - \nabla f(z^{t})}^2.
    \label{eq:bi_fast_diana_k}
  \end{aligned}
  \end{equation}
\end{lemma}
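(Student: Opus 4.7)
The proof plan hinges on conditioning on the Bernoulli coin flip $c^t$, since $c^t$ determines both $k^{t+1}$ and $z^{t+1}$ simultaneously according to Line~\ref{line:bi_diana:k} of Algorithm~\ref{alg:bi_diana}. On the event $\{c^t = 0\}$ (probability $1-p$), one has $k^{t+1} = k^t$ and $z^{t+1} = z^t$, which contributes exactly the term $(1-p)\norm{k^t - \nabla f(z^t)}^2$ to the bound (using that $k^t$ and $z^t$ are measurable with respect to the iteration-$t$ filtration). On the event $\{c^t = 1\}$ (probability $p$), one has $k^{t+1} = v^t$ and $z^{t+1} = x^{t+1}$, so the entire task reduces to controlling $\ExpSub{t}{\norm{v^t - \nabla f(x^{t+1})}^2}$.

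To bound $\norm{v^t - \nabla f(x^{t+1})}^2$, I would first insert and subtract $\nabla f(z^t)$, apply Young's inequality \eqref{eq:young_2}, and get
\[
\norm{v^t - \nabla f(x^{t+1})}^2 \leq 2 \norm{v^t - \nabla f(z^t)}^2 + 2\norm{\nabla f(z^t) - \nabla f(x^{t+1})}^2.
\]
Then I would insert $\nabla f(y^{t+1})$ into the second term and apply \eqref{eq:young_2} again:
\[
\norm{\nabla f(z^t) - \nabla f(x^{t+1})}^2 \leq 2\norm{\nabla f(z^t) - \nabla f(y^{t+1})}^2 + 2\norm{\nabla f(y^{t+1}) - \nabla f(x^{t+1})}^2.
\]
The first piece is bounded via Lemma~\ref{lemma:lipt_func} by $4L D_f(z^t, y^{t+1})$, and the second is bounded by $2L^2 \norm{x^{t+1} - y^{t+1}}^2$ using the $L$--smoothness of $f$ from Assumption~\ref{ass:lipschitz_constant}. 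Combining these yields
\[
\norm{v^t - \nabla f(x^{t+1})}^2 \leq 2\norm{v^t - \nabla f(z^t)}^2 + 8 L D_f(z^t, y^{t+1}) + 4L^2 \norm{x^{t+1} - y^{t+1}}^2.
\]

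Finally, I would take conditional expectation $\ExpSub{t}{\cdot}$, multiply by $p$, and add the $(1-p)\norm{k^t-\nabla f(z^t)}^2$ contribution from the other case to obtain \eqref{eq:bi_fast_diana_k}. The step uses implicitly that $c^t$ is drawn independently of the compressor randomness that defines $x^{t+1}$, so the two sources of randomness factor cleanly, and that $v^t, z^t, k^t$ are all measurable with respect to the filtration at time $t$ (since $v^{t+1}$ is updated at the end of iteration $t$ in Line~\ref{line:bi_diana:v}). There is no real obstacle here; the only mildly delicate point is making sure the measurability of $v^t$ is correctly tracked so that $\ExpSub{t}{\norm{v^t - \nabla f(z^t)}^2}$ is indeed just $\norm{v^t - \nabla f(z^t)}^2$, justifying the form stated in the lemma.
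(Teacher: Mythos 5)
Your proposal is correct and mirrors the paper's own proof: both condition on the coin $c^t$ to reduce the claim to bounding $p\,\ExpSub{t}{\norm{v^t - \nabla f(x^{t+1})}^2}$ plus the $(1-p)\norm{k^t - \nabla f(z^t)}^2$ term, then apply Young's inequality \eqref{eq:young_2} twice (inserting $\nabla f(z^t)$ and $\nabla f(y^{t+1})$), Lemma~\ref{lemma:lipt_func} for the Bregman term, and $L$--smoothness for the $\norm{x^{t+1}-y^{t+1}}^2$ term, arriving at the same constants. No gaps.
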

\begin{proof}
  Note that $k^{t+1}$ and $z^{t+1}$ are coupled by the same random variable $c^t.$ Therefore,
  \begin{align*}
    &\ExpSub{t}{\norm{k^{t+1} - \nabla f(z^{t+1})}^2} \\
    &=p\ExpSub{t}{\norm{v^{t} - \nabla f(x^{t+1})}^2} + (1 - p)\norm{k^{t} - \nabla f(z^{t})}^2 \\
    &\overset{\eqref{eq:young_2}}{\leq}2 p\ExpSub{t}{\norm{v^{t} - \nabla f(z^{t})}^2} + 4 p \norm{\nabla f(z^{t}) - \nabla f(y^{t+1})}^2 + 4 p\ExpSub{t}{\norm{\nabla f(y^{t+1}) - \nabla f(x^{t+1})}^2} \\
    &\quad + (1 - p)\norm{k^{t} - \nabla f(z^{t})}^2 \\
    &\leq 2 p\ExpSub{t}{\norm{v^{t} - \nabla f(z^{t})}^2} + 8 p L D_f(z^{t}, y^{t+1}) + 4 p L^2 \ExpSub{t}{\norm{x^{t+1} - y^{t+1}}^2} + (1 - p)\norm{k^{t} - \nabla f(z^{t})}^2,
  \end{align*}
  where we use Assumptions~\ref{ass:lipschitz_constant} and Lemma~\ref{lemma:lipt_func}.
\end{proof}

\begin{lemma}
  \label{lemma:bi_fast_diana_v}
  Suppose that Assumptions~\ref{ass:lipschitz_constant}, \ref{ass:workers_lipschitz_constant} and \ref{ass:convex} hold. The momentum $\tau \in (0, 1]$ and the probability $p \in (0, 1].$
   Then, for Algorithm~\ref{alg:bi_diana}, the following inequality holds:
   \begin{equation}
   \begin{aligned}
    \label{eq:bi_fast_diana_v}
    &\ExpSub{t}{\norm{v^{t+1} - \nabla f(z^{t+1})}^2} \\
    &\leq \left(1 - \frac{\tau}{2}\right) \norm{v^{t} - \nabla f(z^{t})}^2 + \frac{2 \tau^2 \omega}{n^2} \sum_{i=1}^n \norm{h^t_i - \nabla f_i(z^{t})}^2 \\
    &\quad + \left(4 p \left(1 + \frac{2 p}{\tau}\right) L + \frac{8 p \tau^2 \omega L_{\max}}{n}\right)D_f(z^t, y^{t+1}) + \left(2 p \left(1 + \frac{2 p}{\tau}\right) L^2 + \frac{4 p \tau^2 \omega \widehat{L}^2}{n}\right)\ExpSub{t}{\norm{x^{t+1} - y^{t+1}}^2}.
  \end{aligned}
  \end{equation}
\end{lemma}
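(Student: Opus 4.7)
The update (line~\ref{line:bi_diana:v}) is $v^{t+1} = (1-\tau) v^t + \tau g^{t+1,z}$ with $g^{t+1,z} \eqdef h^t + \frac{1}{n}\sum_{i=1}^n m_i^{t,z}$. Let $\cF$ denote the $\sigma$-algebra generated by everything in iteration $t$ except the compressors $\cC_i^{D,z}$. By Assumption~\ref{ass:unbiased_compressors} and Definition~\ref{def:unbiased_compression}, conditional on $\cF$ the vector $g^{t+1,z}$ is unbiased for $\nabla f(z^{t+1})$ with conditional variance at most $(\omega/n^2)\sum_i \norm{\nabla f_i(z^{t+1}) - h_i^t}^2$ (exactly the computation in the proof of Lemma~\ref{lemma:aux_stoch_grad}). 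The plan is to combine this variance reduction in the $\cC^{D,z}$ direction with the Markovian structure of $z^{t+1}$ (which equals $x^{t+1}$ with probability $p$ and $z^t$ otherwise), in a way parallel to Lemma~\ref{lemma:aux_control}.

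The starting point is the decomposition
\begin{align*}
v^{t+1} - \nabla f(z^{t+1}) = (1-\tau)\bigl[(v^t - \nabla f(z^t)) + (\nabla f(z^t) - \nabla f(z^{t+1}))\bigr] + \tau\bigl(g^{t+1,z} - \nabla f(z^{t+1})\bigr).
\end{align*}
The bracketed term is $\cF$-measurable and the last term has conditional mean zero, so by \eqref{eq:vardecomp} and the variance bound above,
\begin{align*}
\ExpSub{t}{\norm{v^{t+1} - \nabla f(z^{t+1})}^2} \le (1-\tau)^2 \ExpSub{t}{\norm{(v^t - \nabla f(z^t)) + (\nabla f(z^t) - \nabla f(z^{t+1}))}^2} + \frac{\tau^2 \omega}{n^2} \ExpSub{t}{\sum_{i=1}^n \norm{\nabla f_i(z^{t+1}) - h_i^t}^2}.
\end{align*}

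For the first piece I would expand the squared norm, use $\ExpSub{t}{\nabla f(z^{t+1}) - \nabla f(z^t)} = p\,\ExpSub{t}{\nabla f(x^{t+1}) - \nabla f(z^t)}$ on the cross term, and apply \eqref{eq:fenchel} with the key parameter $s = \tau/(2p)$, together with Jensen's inequality $\norm{\ExpSub{t}{X}}^2 \le \ExpSub{t}{\norm{X}^2}$. This produces the bound $(1+\tau/2)\norm{v^t - \nabla f(z^t)}^2 + p(1+2p/\tau)\,\ExpSub{t}{\norm{\nabla f(x^{t+1}) - \nabla f(z^t)}^2}$. Multiplying by $(1-\tau)^2$ and invoking $(1-\tau)(1+\tau/2) \le 1-\tau/2$ produces the $(1-\tau/2)\norm{v^t - \nabla f(z^t)}^2$ summand claimed in \eqref{eq:bi_fast_diana_v}, while the residual is bounded via \eqref{eq:young_2} through $y^{t+1}$: the $y^{t+1}\to x^{t+1}$ leg uses $L$-smoothness (Assumption~\ref{ass:lipschitz_constant}) and the $z^t\to y^{t+1}$ leg uses Lemma~\ref{lemma:lipt_func}, giving precisely the $2p(1+2p/\tau)L^2$ and $4p(1+2p/\tau)L$ coefficients. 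For the variance piece, \eqref{eq:young_2} gives $\norm{\nabla f_i(z^{t+1}) - h_i^t}^2 \le 2\norm{\nabla f_i(z^t) - h_i^t}^2 + 2\norm{\nabla f_i(z^{t+1}) - \nabla f_i(z^t)}^2$. The first term directly yields the $\tfrac{2\tau^2\omega}{n^2}\sum_i\norm{h_i^t - \nabla f_i(z^t)}^2$ summand; taking $\ExpSub{t}{\cdot}$ of the second collapses the change-of-gradient sum to a factor $p$ times its value at $x^{t+1}$, which after one more \eqref{eq:young_2}-split through $y^{t+1}$ yields the averaged $\widehat{L}^2$ term via Assumption~\ref{ass:workers_lipschitz_constant} and the $L_{\max}$ Bregman term via Lemma~\ref{lemma:lipt_func} and $L_i \le L_{\max}$, matching $\tfrac{4p\tau^2\omega\widehat L^2}{n}\,\ExpSub{t}{\norm{x^{t+1}-y^{t+1}}^2}$ and $\tfrac{8p\tau^2\omega L_{\max}}{n}\,D_f(z^t,y^{t+1})$.

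The main obstacle is identifying the nonstandard Young parameter $s=\tau/(2p)$ in step 2: a more natural choice like $s=\tau$ would produce $p/\tau$ instead of $p^2/\tau$ in the Bregman coefficient, loosening the bound in a way that ultimately prevents the Lyapunov analysis from sustaining the accelerated rate. Once this $\tau$-$p$ coupling is in place, the remaining manipulations are bookkeeping that mirrors the proof of Lemma~\ref{lemma:aux_control}.
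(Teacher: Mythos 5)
Your proposal is correct and takes essentially the same route as the paper's proof: variance decomposition with respect to the $\cC_i^{D,z}$ randomness, splitting over the probability-$p$ update of $z^{t+1}$, the Young-type step with the coupling parameter $\nicefrac{\tau}{2p}$, and the final split through $y^{t+1}$ using Assumption~\ref{ass:lipschitz_constant}, Assumption~\ref{ass:workers_lipschitz_constant} and Lemma~\ref{lemma:lipt_func}. The only cosmetic difference is that you handle the term $\ExpSub{t}{\norm{v^t - \nabla f(z^{t+1})}^2}$ by expanding the square and bounding the cross term with \eqref{eq:fenchel} plus Jensen, whereas the paper conditions on the coin $c^t$ and applies \eqref{eq:young} to the $x^{t+1}$ branch; both yield the same intermediate bound $\left(1+\frac{\tau}{2}\right)\norm{v^t - \nabla f(z^t)}^2 + p\left(1+\frac{2p}{\tau}\right)\ExpSub{t}{\norm{\nabla f(x^{t+1}) - \nabla f(z^t)}^2}$ and identical final constants.
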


\begin{proof}
Using the definition of $v^{t+1},$ we get
\begin{align*}
  \ExpSub{t}{\norm{v^{t+1} - \nabla f(z^{t+1})}^2} = \ExpSub{t}{\norm{(1 - \tau) v^{t} + \tau\left(h^t + \frac{1}{n} \sum_{i=1}^n \cC_i^{D, z}\left(\nabla f_i(z^{t+1}) - h^t_i\right)\right) - \nabla f(z^{t+1})}^2}.
\end{align*}
Assumption~\ref{ass:unbiased_compressors}, including the independence and the unbiasedness of the compressors, insures that
\begin{align*}
  &\ExpSub{t}{\norm{v^{t+1} - \nabla f(z^{t+1})}^2} \\
  &\overset{\eqref{eq:vardecomp}}{=}(1 - \tau)^2 \ExpSub{t}{\norm{v^{t} - \nabla f(z^{t+1})}^2} + \tau^2\ExpSub{t}{\norm{\frac{1}{n} \sum_{i=1}^n \cC_i^{D, z}\left(\nabla f_i(z^{t+1}) - h^t_i\right) - (\nabla f(z^{t+1}) - h^t)}^2} \\
  &= (1 - \tau)^2 \ExpSub{t}{\norm{v^{t} - \nabla f(z^{t+1})}^2} + \frac{\tau^2}{n^2} \sum_{i=1}^n \ExpSub{t}{\norm{\cC_i^{D, z}\left(\nabla f_i(z^{t+1}) - h^t_i\right) - (\nabla f(z^{t+1}) - h^t)}^2} \\
  &\leq (1 - \tau)^2 \ExpSub{t}{\norm{v^{t} - \nabla f(z^{t+1})}^2} + \frac{\tau^2 \omega}{n^2} \sum_{i=1}^n \ExpSub{t}{\norm{\nabla f_i(z^{t+1}) - h^t_i}^2}.
\end{align*}
Using the definition of $z^{t+1}$, we have
\begin{align*}
  &\ExpSub{t}{\norm{v^{t+1} - \nabla f(z^{t+1})}^2} \\
  &\leq (1 - \tau)^2 (1 - p) \norm{v^{t} - \nabla f(z^{t})}^2 + (1 - \tau)^2 p \ExpSub{t}{\norm{v^{t} - \nabla f(x^{t+1})}^2} \\
  &\quad + \frac{(1 - p) \tau^2 \omega}{n^2} \sum_{i=1}^n \norm{h^t_i - \nabla f_i(z^{t})}^2 + \frac{p \tau^2 \omega}{n^2} \sum_{i=1}^n \ExpSub{t}{\norm{h^t_i - \nabla f_i(x^{t+1})}^2} \\
  &\overset{\eqref{eq:young}, \eqref{eq:young_2}}{\leq} (1 - \tau)^2 (1 - p) \norm{v^{t} - \nabla f(z^{t})}^2 \\
  &\quad + (1 - \tau)^2 \left(1 + \frac{\tau}{2 p}\right) p \norm{v^{t} - \nabla f(z^{t})}^2 + (1 - \tau)^2 \left(1 + \frac{2 p}{\tau}\right) p \ExpSub{t}{\norm{\nabla f(z^{t}) - \nabla f(x^{t+1})}^2} \\
  &\quad + \frac{(1 - p) \tau^2 \omega}{n^2} \sum_{i=1}^n \norm{h^t_i - \nabla f_i(z^{t})}^2 \\
  &\quad + \frac{2 p \tau^2 \omega}{n^2} \sum_{i=1}^n \ExpSub{t}{\norm{h^t_i - \nabla f_i(z^t)}^2} + \frac{2 p \tau^2 \omega}{n^2} \sum_{i=1}^n \ExpSub{t}{\norm{\nabla f_i(z^t) - \nabla f_i(x^{t+1})}^2} \\
  &= (1 - \tau)^2 \left(1 + \frac{\tau}{2}\right) \norm{v^{t} - \nabla f(z^{t})}^2 + (1 - \tau)^2 \left(1 + \frac{2 p}{\tau}\right) p \ExpSub{t}{\norm{\nabla f(z^{t}) - \nabla f(x^{t+1})}^2} \\
  &\quad + \frac{(1 + p) \tau^2 \omega}{n^2} \sum_{i=1}^n \norm{h^t_i - \nabla f_i(z^{t})}^2 + \frac{2 p \tau^2 \omega}{n^2} \sum_{i=1}^n \ExpSub{t}{\norm{\nabla f_i(z^t) - \nabla f_i(x^{t+1})}^2}.
\end{align*}
Using $0 \leq 1 - \tau \leq 1,$ $p \in (0, 1]$ and \eqref{eq:ineq1}, we get
\begin{align*}
  &\ExpSub{t}{\norm{v^{t+1} - \nabla f(z^{t+1})}^2} \\
  &\leq \left(1 - \frac{\tau}{2}\right) \norm{v^{t} - \nabla f(z^{t})}^2 + \left(1 + \frac{2 p}{\tau}\right) p \ExpSub{t}{\norm{\nabla f(z^{t}) - \nabla f(x^{t+1})}^2} \\
  &\quad + \frac{2 \tau^2 \omega}{n^2} \sum_{i=1}^n \norm{h^t_i - \nabla f_i(z^{t})}^2 + \frac{2 p \tau^2 \omega}{n^2} \sum_{i=1}^n \ExpSub{t}{\norm{\nabla f_i(z^t) - \nabla f_i(x^{t+1})}^2} \\
  &\overset{\eqref{eq:young_2}}{\leq} \left(1 - \frac{\tau}{2}\right) \norm{v^{t} - \nabla f(z^{t})}^2 \\
  &\quad + 2 p \left(1 + \frac{2 p}{\tau}\right) \norm{\nabla f(z^{t}) - \nabla f(y^{t+1})}^2 + 2 p \left(1 + \frac{2 p}{\tau}\right) \ExpSub{t}{\norm{\nabla f(y^{t+1}) - \nabla f(x^{t+1})}^2} \\
  &\quad + \frac{2 \tau^2 \omega}{n^2} \sum_{i=1}^n \norm{h^t_i - \nabla f_i(z^{t})}^2 \\
  &\quad + \frac{4 p \tau^2 \omega}{n^2} \sum_{i=1}^n \norm{\nabla f_i(z^t) - \nabla f_i(y^{t+1})}^2 + \frac{4 p \tau^2 \omega}{n^2} \sum_{i=1}^n \ExpSub{t}{\norm{\nabla f_i(x^{t+1}) - \nabla f_i(y^{t+1})}^2}.
\end{align*}
It is left to use Assumptions~\ref{ass:lipschitz_constant} and \ref{ass:workers_lipschitz_constant} with Lemma~\ref{lemma:lipt_func} to obtain
\begin{align*}
  &\ExpSub{t}{\norm{v^{t+1} - \nabla f(z^{t+1})}^2} \\
  &\leq \left(1 - \frac{\tau}{2}\right) \norm{v^{t} - \nabla f(z^{t})}^2 \\
  &\quad + 4 p \left(1 + \frac{2 p}{\tau}\right) L D_f(z^t, y^{t+1}) + 2 p \left(1 + \frac{2 p}{\tau}\right) L^2 \ExpSub{t}{\norm{y^{t+1} - x^{t+1}}^2} \\
  &\quad + \frac{2 \tau^2 \omega}{n^2} \sum_{i=1}^n \norm{h^t_i - \nabla f_i(z^{t})}^2 \\
  &\quad + \frac{8 p \tau^2 \omega L_{\max}}{n} D_f(z^t, y^{t+1}) + \frac{4 p \tau^2 \omega \widehat{L}^2}{n} \ExpSub{t}{\norm{x^{t+1} - y^{t+1}}^2} \\
  &= \left(1 - \frac{\tau}{2}\right) \norm{v^{t} - \nabla f(z^{t})}^2 + \frac{2 \tau^2 \omega}{n^2} \sum_{i=1}^n \norm{h^t_i - \nabla f_i(z^{t})}^2 \\
  &\quad + \left(4 p \left(1 + \frac{2 p}{\tau}\right) L + \frac{8 p \tau^2 \omega L_{\max}}{n}\right)D_f(z^t, y^{t+1}) + \left(2 p \left(1 + \frac{2 p}{\tau}\right) L^2 + \frac{4 p \tau^2 \omega \widehat{L}^2}{n}\right)\ExpSub{t}{\norm{x^{t+1} - y^{t+1}}^2}. \\
\end{align*}
\end{proof}

\subsection{Main theorem}
\newcommand{\liptconstraintconst}{660508}
\newcommand{\liptconstraint}{\bar{L} = \liptconstraintconst \times \max\left\{\frac{L}{\alpha}, \frac{L p}{\alpha \tau}, \frac{\sqrt{L L_{\max}} p \sqrt{\omega \tau}}{\alpha \beta \sqrt{n}}, \frac{\sqrt{L L_{\max}} \sqrt{p} \sqrt{\omega \tau}}{\alpha \sqrt{\beta} \sqrt{n}}, \frac{L_{\max} \omega p^2}{\beta^2 n}, \frac{L_{\max} \omega}{n}\right\}}
\begin{theorem}
  \label{theorem:main_theorem}
  Suppose that Assumptions~\ref{ass:lipschitz_constant}, \ref{ass:workers_lipschitz_constant}, \ref{ass:convex}, \ref{ass:unbiased_compressors} hold. Let
\begin{align}
  \label{eq:constr_lipts}
  \liptconstraint,
\end{align}
$\beta \leq \frac{1}{\omega + 1},$ and $\theta_{\min} = \frac{1}{4}\min\left\{1, \frac{\alpha}{p}, \frac{\tau}{p}, \frac{\beta}{p}\right\}.$
For all $t \geq 0,$ Algorithm~\ref{alg:bi_diana} guarantees that
\begin{equation}
\begin{aligned}
  \label{eq:main_theorem}
  &\Gamma_{t+1}\left(\Exp{f(z^{t+1}) - f(x^*)} + \kappa \Exp{\frac{1}{n}\sum_{i=1}^n \norm{h_i^{t+1} - \nabla f_i(z^{t+1})}^2} + \nu_{t+1} \Exp{\norm{w^{t+1} - u^{t+1}}^2}\right.\\
    &\quad  \left.+ \rho \Exp{\norm{k^{t+1} - \nabla f(z^{t+1})}^2} + \lambda \Exp{\norm{v^{t+1} - \nabla f(z^{t+1})}^2}\right) + \frac{\bar{L} + \Gamma_{t+1} \mu}{2} \Exp{\norm{u^{t+1} - x^*}^2}\\
    &\leq \Gamma_{t} \Bigg( \Exp{f(z^{t}) - f(x^*)} + \kappa \Exp{\frac{1}{n}\sum_{i=1}^n \norm{h_i^t - \nabla f_i(z^{t})}^2}  + \nu_{t} \Exp{\norm{w^t - u^t}^2} \\
    &\qquad\qquad + \rho \Exp{\norm{k^{t} - \nabla f(z^t)}^2} + \lambda \Exp{\norm{v^{t} - \nabla f(z^t)}^2} \Bigg) + \frac{\bar{L} + \Gamma_{t} \mu}{2} \Exp{\norm{u^t - x^*}^2}
\end{aligned}
\end{equation}
for some $\kappa,$ $\rho,$ $\lambda,$ $\nu_{t} \geq 0.$
\end{theorem}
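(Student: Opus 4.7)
The plan is to construct the Lyapunov function
$$\Phi_t \eqdef \Gamma_t\bigl(f(z^t) - f(x^*) + \kappa S_t + \nu_t W_t + \rho K_t + \lambda V_t\bigr) + \tfrac{\bar{L}+\Gamma_t\mu}{2}\norm{u^t - x^*}^2,$$
where $S_t, W_t, K_t, V_t$ denote the four error quantities appearing in \eqref{eq:main_theorem}, and to establish $\Exp{\Phi_{t+1}} \leq \Phi_t$ by taking the linear combination of Lemmas~\ref{lemma:aux_first}, \ref{lemma:aux_control}, \ref{lemma:bi_fast_diana_iterates}, \ref{lemma:bi_fast_diana_k}, \ref{lemma:bi_fast_diana_v} with weights $\Gamma_{t+1}$, $\kappa\Gamma_{t+1}$, $\nu_{t+1}\Gamma_{t+1}$, $\rho\Gamma_{t+1}$, $\lambda\Gamma_{t+1}$ respectively. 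The key algebraic glue is Lemma~\ref{lemma:learning_rates}(2), which gives $\gamma_{t+1} = p\theta_{t+1}\Gamma_{t+1}$ and hence $\Gamma_t = (1 - p\theta_{t+1})\Gamma_{t+1}$. This makes the $(1-p\theta_{t+1})$ contraction factor in Lemma~\ref{lemma:aux_first} collapse exactly to the ratio $\Gamma_t/\Gamma_{t+1}$, and converts the mysterious $p\theta_{t+1}\frac{\bar{L}+\Gamma_{t+1}\mu}{2\gamma_{t+1}}\norm{u^{t+1}-x^*}^2$ into the matching $\frac{\bar{L}+\Gamma_{t+1}\mu}{2\Gamma_{t+1}}\norm{u^{t+1}-x^*}^2$ on the left-hand side.

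Next, I would collect the coefficients of every term that is not already part of the Lyapunov function: the old error quantities $S_t, W_t, K_t, V_t$, the Bregman divergence $D_f(z^t, y^{t+1})$, the step $\Exp{\norm{x^{t+1}-y^{t+1}}^2}$, and the negative term $-\tfrac{p\theta_{t+1}^2\bar{L}}{2}\Exp{\norm{u^{t+1}-u^t}^2}$. For $S_t, K_t, V_t$ the requirement is that the shrinkage factors $1-\beta/2$, $1-p$, $1-\tau/2$ (from Lemmas~\ref{lemma:aux_control}, \ref{lemma:bi_fast_diana_k}, \ref{lemma:bi_fast_diana_v}) be dominated by $1 - p\theta_{t+1}$; this is ensured by the stated choice $\theta_{\min} = \tfrac{1}{4}\min\{1, \alpha/p, \tau/p, \beta/p\}$, which also yields a strictly positive slack $\Omega(\alpha/4)$, $\Omega(p/4)$, $\Omega(\tau/4)$ in each to absorb the corresponding positive cross-terms. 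For $W_t$ I would make the natural choice $\nu_t\Gamma_t = \nu_{t+1}\Gamma_{t+1}(1-\alpha/2)$, defining $\nu_t$ as a time-varying sequence.

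The delicate part is the coupled coefficients of $D_f(z^t,y^{t+1})$ and $\Exp{\norm{x^{t+1}-y^{t+1}}^2}$, which receive contributions from every lemma. For the latter, I would use the identity $x^{t+1}-y^{t+1} = \theta_{t+1}(u^{t+1}-w^t)$ (which follows from Lines~\ref{line:bi_diana:y} and the $x^{t+1}$ update) and the Young-type bound $\norm{u^{t+1}-w^t}^2 \leq 2\norm{u^{t+1}-u^t}^2 + 2\norm{w^t-u^t}^2$, so that the step term splits into a piece absorbed by $-\tfrac{p\theta_{t+1}^2\bar{L}}{2}\Exp{\norm{u^{t+1}-u^t}^2}$ (which requires $\bar{L}$ to be large enough) and a piece absorbed by the $\Omega(\nu_t\alpha)$ slack just produced in $W_t$. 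The $D_f$ coefficient similarly absorbs $\kappa L_{\max}$, $\nu_{t+1}(\omega L_{\max}/n + L/\alpha)(\gamma_{t+1}/(\bar{L}+\Gamma_{t+1}\mu))^2$, $\rho L$, and $\lambda L$ terms, against the slack $(1-\theta_{t+1})$ in the sign of the $D_f$ coefficient of Lemma~\ref{lemma:aux_first}.

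The main obstacle is step three: after expansion, one obtains a dense system of constraints on $\bar{L}, \kappa, \rho, \lambda, \nu_t$ and must identify values that satisfy all of them simultaneously with the tightest possible $\bar{L}$. As the authors already note in the proof sketch, the bookkeeping is intractable by hand, and a SymPy-assisted search produces the explicit formulas \eqref{eq:rho_sol}, \eqref{eq:kappa_sol}, \eqref{eq:lambda_sol}, \eqref{eq:nu_sol}. Substituting them back yields a long list of lower bounds on $\bar{L}$ (Sections~\ref{sec:sym_comp_kapap_const}, \ref{sec:sym_comp_rho_const}, \ref{sec:sym_comp_bregman_const}, \ref{sec:sym_comp_dist_const}), and a case analysis (as in Section~\ref{sec:sym_comp_check}) shows that they are all implied by the single bound \eqref{eq:constr_lipts}. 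With this choice of $\bar{L}$, every bad-term coefficient is non-positive and the telescoping inequality \eqref{eq:main_theorem} follows after taking total expectation.
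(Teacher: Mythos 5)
Your overall route is the paper's route: combine Lemmas~\ref{lemma:aux_first}, \ref{lemma:aux_control}, \ref{lemma:bi_fast_diana_iterates}, \ref{lemma:bi_fast_diana_k}, \ref{lemma:bi_fast_diana_v} with weights $1,\kappa,\nu,\rho,\lambda$ (multiplying by $\Gamma_{t+1}$ at the start, as you do, is the same as the paper's final multiplication by $\gamma_{t+1}/(p\theta_{t+1})=\Gamma_{t+1}$), use $\gamma_{t+1}=p\theta_{t+1}\Gamma_{t+1}$ and $\Gamma_t=(1-p\theta_{t+1})\Gamma_{t+1}$ from Lemma~\ref{lemma:learning_rates}, the identity $x^{t+1}-y^{t+1}=\theta_{t+1}(u^{t+1}-w^t)$ with the Young split, absorb the $D_f$ and $\norm{u^{t+1}-u^t}^2$ terms into the negative slack, and delegate the choice of $\kappa,\rho,\lambda,\nu_t$ and the reduction of all $\bar L$-constraints to \eqref{eq:constr_lipts} to the symbolic computation, exactly as in Lemmas~\ref{lemma:parameters} and \ref{lemma:negative_parameters} and Section~\ref{sec:sym_comp_check}.

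One concrete step in your outline would fail as written: the ``natural choice'' $\nu_t\Gamma_t=\nu_{t+1}\Gamma_{t+1}(1-\nicefrac{\alpha}{2})$. First, it is internally inconsistent with your own absorption plan: since $\Gamma_t=(1-p\theta_{t+1})\Gamma_{t+1}$, this recursion makes the $(1-\nicefrac{\alpha}{2})$ contraction of Lemma~\ref{lemma:bi_fast_diana_iterates} match the $W$-term exactly, leaving no $\Omega(\nu_t\alpha)$ slack to absorb the extra $2\theta_{t+1}^2(\cdot)\norm{w^t-u^t}^2$ piece produced by the Young split. Second, because $p\theta_{t+1}\leq\nicefrac{\alpha}{4}<\nicefrac{\alpha}{2}$, the recursion forces $\nu_{t+1}=\nu_t(1-p\theta_{t+1})/(1-\nicefrac{\alpha}{2})>\nu_t$, so $\nu_t$ grows geometrically; then the cross-term constraints in \eqref{eq:parameters_task}, e.g. $\nu_t\frac{4}{\alpha}\bigl(\frac{\gamma_{t+1}}{\bar L+\Gamma_{t+1}\mu}\bigr)^2\lesssim\rho p$ with a \emph{constant} $\rho$, cannot hold uniformly in $t$ (when $\mu=0$, $\gamma_{t+1}/\bar L$ itself grows). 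The scaling that makes everything work is the paper's $\nu_t=\theta_{t+1}^2\,\widehat{\nu}(\kappa,\rho)$ from \eqref{eq:nu_sol}: the $\theta_{t+1}^2$ factor pairs with $\bigl(\frac{\gamma_{t+1}}{\bar L+\Gamma_{t+1}\mu}\bigr)^2$ so that Lemma~\ref{lemma:learning_rates}(3), $\bar L\theta_{t+1}\gamma_{t+1}\leq\bar L+\Gamma_t\mu$, gives the $t$-uniform bound $\widehat{\nu}/\bar L^2$; it makes $\nu_t$ non-increasing (since $\theta_{t+1}$ is non-increasing), so the time-$(t{+}1)$ weight is dominated by the time-$t$ weight; and taking $\widehat{\nu}=\frac{8}{\alpha}\times(\text{the bracket})$ converts the Young-split piece into exactly an $\frac{\alpha}{4}\nu_t$ slack, i.e. the gap between $(1-\nicefrac{\alpha}{2})$ and $(1-\nicefrac{\alpha}{4})\leq(1-p\theta_{t+1})$. (Also a trivial slip: the slack available for the gradient-shift term $\frac1n\sum_i\norm{h_i^t-\nabla f_i(z^t)}^2$ is $\nicefrac{\beta}{4}$, not $\nicefrac{\alpha}{4}$.) Since you ultimately invoke \eqref{eq:rho_sol}--\eqref{eq:nu_sol} anyway, the fix is simply to drop your recursion and adopt that choice explicitly.
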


\begin{proof}
  We fix some constants $\kappa,$ $\rho,$ $\lambda,$ $\nu_{t} \geq 0$ for all $t \geq 0$ that we define later. By combining Lemma~\ref{lemma:aux_first} with $\kappa \times \eqref{eq:aux_control}$ from Lemma~\ref{lemma:aux_control}, $\nu_{t} \times \eqref{eq:bi_fast_diana_iterates}$ from Lemma~\ref{lemma:bi_fast_diana_iterates}, $\rho \times \eqref{eq:bi_fast_diana_k}$ from Lemma~\ref{lemma:bi_fast_diana_k}, and $\lambda \times \eqref{eq:bi_fast_diana_v}$ from Lemma~\ref{lemma:bi_fast_diana_v}, we get the following inequality:
  \begin{align*}
    &\ExpSub{t}{f(z^{t+1}) - f(x^*)} + \kappa \ExpSub{t}{\frac{1}{n}\sum_{i=1}^n \norm{h_i^{t+1} - \nabla f_i(z^{t+1})}^2} + \nu_{t} \ExpSub{t}{\norm{w^{t+1} - u^{t+1}}^2}\\
    &\quad  + \rho \ExpSub{t}{\norm{k^{t+1} - \nabla f(z^{t+1})}^2} + \lambda \ExpSub{t}{\norm{v^{t+1} - \nabla f(z^{t+1})}^2}\\
    &\leq (1 - p\theta_{t+1}) \left(f(z^{t}) - f(x^*)\right) \\
    &\quad + \frac{2 p \omega}{n \bar{L}} \left(\frac{1}{n}\sum_{i=1}^n \norm{h_i^t - \nabla f_i(z^{t})}^2\right) \\
    &\quad+ p\theta_{t+1} \left(\frac{\bar{L} + \Gamma_{t} \mu}{2 \gamma_{t+1}} \norm{u^t - x^*}^2 - \frac{\bar{L} + \Gamma_{t+1} \mu}{2 \gamma_{t+1}} \ExpSub{t}{\norm{u^{t+1} - x^*}^2}\right) \\
    &\quad+ p\left(\frac{4 \omega L_{\max}}{n \bar{L}} + \theta_{t+1} - 1 \right) D_f(z^t, y^{t+1}) \\
    &\quad+ \frac{pL}{2} \ExpSub{t}{\norm{x^{t+1} - y^{t+1}}^2} - \frac{p \theta_{t+1}^2 \bar{L}}{2} \ExpSub{t}{\norm{u^{t+1} - u^t}^2} \\
    &\quad + \kappa \left(8 p \left(1 + \frac{p}{\beta}\right) L_{\max} D_f(z^t, y^{t+1}) + 4 p \left(1 + \frac{p}{\beta}\right) \widehat{L}^2 \ExpSub{t}{\norm{x^{t+1} - y^{t+1}}^2} + \left(1 - \frac{\beta}{2}\right) \frac{1}{n}\sum_{i=1}^n \norm{h^{t}_i - \nabla f_i(z^t)}^2\right) \\
    &\quad + \nu_{t} \left(\left(1 - \frac{\alpha}{2}\right) \norm{w^t - u^t}^2  + \frac{4}{\alpha} \left(\frac{\gamma_{t+1}}{\bar{L} + \Gamma_{t+1} \mu}\right)^2 \norm{k^t - \nabla f(z^{t})}^2\right. \\
    &\qquad\qquad \left.+ \frac{2 \omega}{n} \left(\frac{\gamma_{t+1}}{\bar{L} + \Gamma_{t+1} \mu}\right)^2\frac{1}{n}\sum_{i=1}^n \norm{\nabla f_i(z^{t}) - h_i^t}^2 + \left(\frac{\gamma_{t+1}}{\bar{L} + \Gamma_{t+1} \mu}\right)^2\left(\frac{4 \omega L_{\max}}{n}  + \frac{8L}{\alpha}\right)D_f(z^t, y^{t+1})\right) \\
    &\quad + \rho \left(2 p\ExpSub{t}{\norm{v^{t} - \nabla f(z^{t})}^2} + 8 p L D_f(z^{t}, y^{t+1}) + 4 p L^2 \ExpSub{t}{\norm{x^{t+1} - y^{t+1}}^2} + (1 - p)\norm{k^{t} - \nabla f(z^{t})}^2\right) \\
    &\quad + \lambda \left(\left(1 - \frac{\tau}{2}\right) \norm{v^{t} - \nabla f(z^{t})}^2 + \frac{2 \tau^2 \omega}{n^2} \sum_{i=1}^n \norm{h^t_i - \nabla f_i(z^{t})}^2 \right.\\
    &\qquad\qquad \left.+ \left(4 p \left(1 + \frac{2 p}{\tau}\right) L + \frac{8 p \tau^2 \omega L_{\max}}{n}\right)D_f(z^t, y^{t+1}) + \left(2 p \left(1 + \frac{2 p}{\tau}\right) L^2 + \frac{4 p \tau^2 \omega \widehat{L}^2}{n}\right)\ExpSub{t}{\norm{x^{t+1} - y^{t+1}}^2}\right).
  \end{align*}
  We regroup the terms and obtain
  \begin{align*}
    &\ExpSub{t}{f(z^{t+1}) - f(x^*)} + \kappa \ExpSub{t}{\frac{1}{n}\sum_{i=1}^n \norm{h_i^{t+1} - \nabla f_i(z^{t+1})}^2} + \nu_{t} \ExpSub{t}{\norm{w^{t+1} - u^{t+1}}^2}\\
    &\quad  + \rho \ExpSub{t}{\norm{k^{t+1} - \nabla f(z^{t+1})}^2} + \lambda \ExpSub{t}{\norm{v^{t+1} - \nabla f(z^{t+1})}^2}\\
    &\leq (1 - p\theta_{t+1}) \left(f(z^{t}) - f(x^*)\right) \\
    &\quad+ p\theta_{t+1} \left(\frac{\bar{L} + \Gamma_{t} \mu}{2 \gamma_{t+1}} \norm{u^t - x^*}^2 - \frac{\bar{L} + \Gamma_{t+1} \mu}{2 \gamma_{t+1}} \ExpSub{t}{\norm{u^{t+1} - x^*}^2}\right) \\
    &\quad+ p\left(\frac{4 \omega L_{\max}}{n \bar{L}} + \kappa 8 \left(1 + \frac{p}{\beta}\right) L_{\max} + \nu_{t} \left(\frac{\gamma_{t+1}}{\bar{L} + \Gamma_{t+1} \mu}\right)^2\left(\frac{4 \omega L_{\max}}{p n} + \frac{8L}{p \alpha}\right) + \rho 8 L +  \right. \\
    &\qquad\qquad+ \left. \lambda \left(4 \left(1 + \frac{2 p}{\tau}\right) L + \frac{8 \tau^2 \omega L_{\max}}{n}\right) + \theta_{t+1} - 1\right) D_f(z^t, y^{t+1}) \\
    &\quad+ \left(\frac{pL}{2} + \kappa 4 p \left(1 + \frac{p}{\beta}\right) \widehat{L}^2 + \rho 4 p L^2 + \lambda \left(2 p \left(1 + \frac{2 p}{\tau}\right) L^2 + \frac{4 p \tau^2 \omega \widehat{L}^2}{n}\right) \right) \ExpSub{t}{\norm{x^{t+1} - y^{t+1}}^2} \\
    &\quad - \frac{p \theta_{t+1}^2 \bar{L}}{2} \ExpSub{t}{\norm{u^{t+1} - u^t}^2} \\
    &\quad + \nu_{t} \left(1 - \frac{\alpha}{2}\right) \norm{w^t - u^t}^2 \\
    &\quad + \left(\nu_{t} \frac{4}{\alpha} \left(\frac{\gamma_{t+1}}{\bar{L} + \Gamma_{t+1} \mu}\right)^2 + \rho (1 - p)\right) \norm{k^{t} - \nabla f(z^t)}^2 \\
    &\quad + \left(\rho 2 p + \lambda \left(1 - \frac{\tau}{2}\right)\right) \norm{v^{t} - \nabla f(z^t)}^2 \\
    &\quad + \left(\frac{2 p \omega}{n \bar{L}} + \nu_{t} \frac{2 \omega}{n} \left(\frac{\gamma_{t+1}}{\bar{L} + \Gamma_{t+1} \mu}\right)^2 + \lambda \frac{2 \tau^2 \omega}{n} + \kappa \left(1 - \frac{\beta}{2}\right)\right) \left(\frac{1}{n}\sum_{i=1}^n \norm{h_i^t - \nabla f_i(z^{t})}^2\right).
  \end{align*}
  Using $x^{t+1} - y^{t+1} = \theta_{t+1} \left(u^{t+1} - w^t\right),$ we get (we mark the changes with color)
  \begin{align*}
    &\ExpSub{t}{f(z^{t+1}) - f(x^*)} + \kappa \ExpSub{t}{\frac{1}{n}\sum_{i=1}^n \norm{h_i^{t+1} - \nabla f_i(z^{t+1})}^2} + \nu_{t} \ExpSub{t}{\norm{w^{t+1} - u^{t+1}}^2}\\
    &\quad  + \rho \ExpSub{t}{\norm{k^{t+1} - \nabla f(z^{t+1})}^2} + \lambda \ExpSub{t}{\norm{v^{t+1} - \nabla f(z^{t+1})}^2}\\
    &\leq (1 - p\theta_{t+1}) \left(f(z^{t}) - f(x^*)\right) \\
    &\quad+ p\theta_{t+1} \left(\frac{\bar{L} + \Gamma_{t} \mu}{2 \gamma_{t+1}} \norm{u^t - x^*}^2 - \frac{\bar{L} + \Gamma_{t+1} \mu}{2 \gamma_{t+1}} \ExpSub{t}{\norm{u^{t+1} - x^*}^2}\right) \\
    &\quad+ p\left(\frac{4 \omega L_{\max}}{n \bar{L}} + \kappa 8 \left(1 + \frac{p}{\beta}\right) L_{\max} + \nu_{t} \left(\frac{\gamma_{t+1}}{\bar{L} + \Gamma_{t+1} \mu}\right)^2\left(\frac{4 \omega L_{\max}}{p n} + \frac{8L}{p \alpha}\right) + \rho 8 L +  \right. \\
    &\qquad\qquad+ \left. \lambda \left(4 \left(1 + \frac{2 p}{\tau}\right) L + \frac{8 \tau^2 \omega L_{\max}}{n}\right) + \theta_{t+1} - 1\right) D_f(z^t, y^{t+1}) \\
    &\quad+ \red{\theta_{t+1}^2 \left(\frac{pL}{2} + \kappa 4 p \left(1 + \frac{p}{\beta}\right) \widehat{L}^2 + \rho 4 p L^2 + \lambda \left(2 p \left(1 + \frac{2 p}{\tau}\right) L^2 + \frac{4 p \tau^2 \omega \widehat{L}^2}{n}\right) \right) \ExpSub{t}{\norm{u^{t+1} - w^t}^2}} \\
    &\quad - \frac{p \theta_{t+1}^2 \bar{L}}{2} \ExpSub{t}{\norm{u^{t+1} - u^t}^2} \\
    &\quad + \nu_{t} \left(1 - \frac{\alpha}{2}\right) \norm{w^t - u^t}^2 \\
    &\quad + \left(\nu_{t} \frac{4}{\alpha} \left(\frac{\gamma_{t+1}}{\bar{L} + \Gamma_{t+1} \mu}\right)^2 + \rho (1 - p)\right) \norm{k^{t} - \nabla f(z^t)}^2 \\
    &\quad + \left(\rho 2 p + \lambda \left(1 - \frac{\tau}{2}\right)\right) \norm{v^{t} - \nabla f(z^t)}^2 \\
    &\quad + \left(\frac{2 p \omega}{n \bar{L}} + \nu_{t} \frac{2 \omega}{n} \left(\frac{\gamma_{t+1}}{\bar{L} + \Gamma_{t+1} \mu}\right)^2 + \lambda \frac{2 \tau^2 \omega}{n} + \kappa \left(1 - \frac{\beta}{2}\right)\right) \left(\frac{1}{n}\sum_{i=1}^n \norm{h_i^t - \nabla f_i(z^{t})}^2\right).
  \end{align*}
  The inequality \eqref{eq:young} implies $\norm{u^{t+1} - w^t}^2 \leq 2 \norm{u^{t+1} - u^{t}}^2 + 2 \norm{u^{t} - w^{t}}^2$ and
  \begin{align*}
    &\ExpSub{t}{f(z^{t+1}) - f(x^*)} + \kappa \ExpSub{t}{\frac{1}{n}\sum_{i=1}^n \norm{h_i^{t+1} - \nabla f_i(z^{t+1})}^2} + \nu_{t} \ExpSub{t}{\norm{w^{t+1} - u^{t+1}}^2}\\
    &\quad  + \rho \ExpSub{t}{\norm{k^{t+1} - \nabla f(z^{t+1})}^2} + \lambda \ExpSub{t}{\norm{v^{t+1} - \nabla f(z^{t+1})}^2}\\
    &\leq (1 - p\theta_{t+1}) \left(f(z^{t}) - f(x^*)\right) \\
    &\quad+ p\theta_{t+1} \left(\frac{\bar{L} + \Gamma_{t} \mu}{2 \gamma_{t+1}} \norm{u^t - x^*}^2 - \frac{\bar{L} + \Gamma_{t+1} \mu}{2 \gamma_{t+1}} \ExpSub{t}{\norm{u^{t+1} - x^*}^2}\right) \\
    &\quad+ p\left(\frac{4 \omega L_{\max}}{n \bar{L}} + \kappa 8 \left(1 + \frac{p}{\beta}\right) L_{\max} + \nu_{t} \left(\frac{\gamma_{t+1}}{\bar{L} + \Gamma_{t+1} \mu}\right)^2\left(\frac{4 \omega L_{\max}}{p n} + \frac{8L}{p \alpha}\right) + \rho 8 L +  \right. \\
    &\qquad\qquad+ \left. \lambda \left(4 \left(1 + \frac{2 p}{\tau}\right) L + \frac{8 \tau^2 \omega L_{\max}}{n}\right) + \theta_{t+1} - 1\right) D_f(z^t, y^{t+1}) \\
    &\quad+ \red{2 \theta_{t+1}^2 \left(\frac{pL}{2} + \kappa 4 p \left(1 + \frac{p}{\beta}\right) \widehat{L}^2 + \rho 4 p L^2 + \lambda \left(2 p \left(1 + \frac{2 p}{\tau}\right) L^2 + \frac{4 p \tau^2 \omega \widehat{L}^2}{n}\right) \right) \ExpSub{t}{\norm{u^{t+1} - u^t}^2}} \\
    &\quad - \frac{p \theta_{t+1}^2 \bar{L}}{2} \ExpSub{t}{\norm{u^{t+1} - u^t}^2} \\
    &\quad + \red{\left(2 \theta_{t+1}^2 \left(\frac{pL}{2} + \kappa 4 p \left(1 + \frac{p}{\beta}\right) \widehat{L}^2 + \rho 4 p L^2 + \lambda \left(2 p \left(1 + \frac{2 p}{\tau}\right) L^2 + \frac{4 p \tau^2 \omega \widehat{L}^2}{n}\right) \right) + \nu_{t} \left(1 - \frac{\alpha}{2}\right)\right) \norm{w^t - u^t}^2} \\
    &\quad + \left(\nu_{t} \frac{4}{\alpha} \left(\frac{\gamma_{t+1}}{\bar{L} + \Gamma_{t+1} \mu}\right)^2 + \rho (1 - p)\right) \norm{k^{t} - \nabla f(z^t)}^2 \\
    &\quad + \left(\rho 2 p + \lambda \left(1 - \frac{\tau}{2}\right)\right) \norm{v^{t} - \nabla f(z^t)}^2 \\
    &\quad + \left(\frac{2 p \omega}{n \bar{L}} + \nu_{t} \frac{2 \omega}{n} \left(\frac{\gamma_{t+1}}{\bar{L} + \Gamma_{t+1} \mu}\right)^2 + \lambda \frac{2 \tau^2 \omega}{n} + \kappa \left(1 - \frac{\beta}{2}\right)\right) \left(\frac{1}{n}\sum_{i=1}^n \norm{h_i^t - \nabla f_i(z^{t})}^2\right).
  \end{align*}
  Now, we want to find appropriate $\kappa,$ $\rho,$ $\lambda,$ and $\nu_{t}$ such that
  \begin{equation}
  \begin{gathered}
    2 \theta_{t+1}^2 \left(\frac{pL}{2} + \kappa 4 p \left(1 + \frac{p}{\beta}\right) \widehat{L}^2 + \rho 4 p L^2 + \lambda \left(2 p \left(1 + \frac{2 p}{\tau}\right) L^2 + \frac{4 p \tau^2 \omega \widehat{L}^2}{n}\right) \right) + \nu_{t} \left(1 - \frac{\alpha}{2}\right) \leq \nu_{t} \left(1 - \frac{\alpha}{4}\right), \\
    \nu_{t} \frac{4}{\alpha} \left(\frac{\gamma_{t+1}}{\bar{L} + \Gamma_{t+1} \mu}\right)^2 + \rho (1 - p) \leq \rho \left(1 - \frac{p}{2}\right), \\
    \rho 2 p + \lambda \left(1 - \frac{\tau}{2}\right) \leq \lambda \left(1 - \frac{\tau}{4}\right), \\
    \frac{2 p \omega}{n \bar{L}} + \nu_{t} \frac{2 \omega}{n} \left(\frac{\gamma_{t+1}}{\bar{L} + \Gamma_{t+1} \mu}\right)^2 + \lambda \frac{2 \tau^2 \omega}{n} + \kappa \left(1 - \frac{\beta}{2}\right)\leq \kappa \left(1 - \frac{\beta}{4}\right).
  \end{gathered}
  \label{eq:parameters_task}
  \end{equation}
  We analyze inequalities \eqref{eq:parameters_task} in the following lemma:
  \begin{restatable}[First Symbolically Computed]{lemma}{LEMMAPARAMETERS}
    \label{lemma:parameters}
    Assume that for the parameter $\bar{L},$ the inequalities from Sections~\ref{sec:sym_comp_kapap_const} and \ref{sec:sym_comp_rho_const} hold. Then, for all $t \geq 0,$ exists $\rho$ in \eqref{eq:rho_sol}, $\kappa$ in \eqref{eq:kappa_sol}, $\lambda$ in \eqref{eq:lambda_sol}, and $\nu_{t}$ in \eqref{eq:nu_sol} such that \eqref{eq:parameters_task} holds.
  \end{restatable}
  We proof lemma separately in Section~\ref{sec:lemma_parameters}. Using the lemma, we have
  \begin{align*}
    &\ExpSub{t}{f(z^{t+1}) - f(x^*)} + \kappa \ExpSub{t}{\frac{1}{n}\sum_{i=1}^n \norm{h_i^{t+1} - \nabla f_i(z^{t+1})}^2} + \nu_{t} \ExpSub{t}{\norm{w^{t+1} - u^{t+1}}^2}\\
    &\quad  + \rho \ExpSub{t}{\norm{k^{t+1} - \nabla f(z^{t+1})}^2} + \lambda \ExpSub{t}{\norm{v^{t+1} - \nabla f(z^{t+1})}^2}\\
    &\leq (1 - p\theta_{t+1}) \left(f(z^{t}) - f(x^*)\right) \\
    &\quad+ p\theta_{t+1} \left(\frac{\bar{L} + \Gamma_{t} \mu}{2 \gamma_{t+1}} \norm{u^t - x^*}^2 - \frac{\bar{L} + \Gamma_{t+1} \mu}{2 \gamma_{t+1}} \ExpSub{t}{\norm{u^{t+1} - x^*}^2}\right) \\
    &\quad+ p\left(\frac{4 \omega L_{\max}}{n \bar{L}} + \kappa 8 \left(1 + \frac{p}{\beta}\right) L_{\max} + \nu_{t} \left(\frac{\gamma_{t+1}}{\bar{L} + \Gamma_{t+1} \mu}\right)^2\left(\frac{4 \omega L_{\max}}{p n} + \frac{8L}{p \alpha}\right) + \rho 8 L +  \right. \\
    &\qquad\qquad+ \left. \lambda \left(4 \left(1 + \frac{2 p}{\tau}\right) L + \frac{8 \tau^2 \omega L_{\max}}{n}\right) + \theta_{t+1} - 1\right) D_f(z^t, y^{t+1}) \\
    &\quad+ 2 \theta_{t+1}^2 \left(\frac{pL}{2} + \kappa 4 p \left(1 + \frac{p}{\beta}\right) \widehat{L}^2 + \rho 4 p L^2 + \lambda \left(2 p \left(1 + \frac{2 p}{\tau}\right) L^2 + \frac{4 p \tau^2 \omega \widehat{L}^2}{n}\right) \right) \ExpSub{t}{\norm{u^{t+1} - u^t}^2} \\
    &\quad - \frac{p \theta_{t+1}^2 \bar{L}}{2} \ExpSub{t}{\norm{u^{t+1} - u^t}^2} \\
    &\quad + \nu_{t} \left(1 - \frac{\alpha}{4}\right) \norm{w^t - u^t}^2 \\
    &\quad + \rho \left(1 - \frac{p}{2}\right) \norm{k^{t} - \nabla f(z^t)}^2 \\
    &\quad + \lambda \left(1 - \frac{\tau}{4}\right) \norm{v^{t} - \nabla f(z^t)}^2 \\
    &\quad + \kappa \left(1 - \frac{\beta}{4}\right) \left(\frac{1}{n}\sum_{i=1}^n \norm{h_i^t - \nabla f_i(z^{t})}^2\right).
  \end{align*}
  Let us separately analyze the terms w.r.t. $D_f(z^t, y^{t+1})$ and $\ExpSub{t}{\norm{u^{t+1} - u^t}^2}$:
  \begin{restatable}[Second Symbolically Computed]{lemma}{LEMMANEGATIVETERMS}
    \label{lemma:negative_parameters}
    Consider the parameters $\rho,$ $\kappa,$ $\lambda,$ and $\nu_{t}$ from Lemma~\ref{lemma:parameters}. Assume that for the parameter $\bar{L},$ the inequalities from Sections~\ref{sec:sym_comp_bregman_const} and \ref{sec:sym_comp_dist_const} hold, and the step size $\theta_{t+1} \leq \nicefrac{1}{4}$ for all $t \geq 0.$ Then, for all $t \geq 0,$ the following inequalities are satisfied:
    \begin{equation}
    \begin{aligned}
      \label{eq:lemma:negative_parameters:bregman}
      &p\left(\frac{4 \omega L_{\max}}{n \bar{L}} + \kappa 8 \left(1 + \frac{p}{\beta}\right) L_{\max} + \nu_{t} \left(\frac{\gamma_{t+1}}{\bar{L} + \Gamma_{t+1} \mu}\right)^2\left(\frac{4 \omega L_{\max}}{p n} + \frac{8L}{p \alpha}\right) + \rho 8 L +  \right. \\
    &\qquad\qquad+ \left. \lambda \left(4 \left(1 + \frac{2 p}{\tau}\right) L + \frac{8 \tau^2 \omega L_{\max}}{n}\right) + \theta_{t+1} - 1\right) D_f(z^t, y^{t+1}) \leq 0
    \end{aligned}
    \end{equation}
    and
    \begin{equation}
      \begin{aligned}
      \label{eq:lemma:negative_parameters:norm}
      &2 \theta_{t+1}^2 \left(\frac{pL}{2} + \kappa 4 p \left(1 + \frac{p}{\beta}\right) \widehat{L}^2 + \rho 4 p L^2 + \lambda \left(2 p \left(1 + \frac{2 p}{\tau}\right) L^2 + \frac{4 p \tau^2 \omega \widehat{L}^2}{n}\right) \right) \ExpSub{t}{\norm{u^{t+1} - u^t}^2} \\
    &\quad - \frac{p \theta_{t+1}^2 \bar{L}}{2} \ExpSub{t}{\norm{u^{t+1} - u^t}^2} \leq 0. \\
  \end{aligned}
\end{equation}
  \end{restatable}
  We prove Lemma~\ref{lemma:negative_parameters} in Section~\ref{sec:negative_parameters}.
  Using the lemma, we get
  \begin{align*}
    &\ExpSub{t}{f(z^{t+1}) - f(x^*)} + \kappa \ExpSub{t}{\frac{1}{n}\sum_{i=1}^n \norm{h_i^{t+1} - \nabla f_i(z^{t+1})}^2} + \nu_{t} \ExpSub{t}{\norm{w^{t+1} - u^{t+1}}^2}\\
    &\quad  + \rho \ExpSub{t}{\norm{k^{t+1} - \nabla f(z^{t+1})}^2} + \lambda \ExpSub{t}{\norm{v^{t+1} - \nabla f(z^{t+1})}^2}\\
    &\leq (1 - p\theta_{t+1}) \left(f(z^{t}) - f(x^*)\right) \\
    &\quad+ p\theta_{t+1} \left(\frac{\bar{L} + \Gamma_{t} \mu}{2 \gamma_{t+1}} \norm{u^t - x^*}^2 - \frac{\bar{L} + \Gamma_{t+1} \mu}{2 \gamma_{t+1}} \ExpSub{t}{\norm{u^{t+1} - x^*}^2}\right) \\
    &\quad + \nu_{t} \left(1 - \frac{\alpha}{4}\right) \norm{w^t - u^t}^2 \\
    &\quad + \rho \left(1 - \frac{p}{2}\right) \norm{k^{t} - \nabla f(z^t)}^2 \\
    &\quad + \lambda \left(1 - \frac{\tau}{4}\right) \norm{v^{t} - \nabla f(z^t)}^2 \\
    &\quad + \kappa \left(1 - \frac{\beta}{4}\right) \left(\frac{1}{n}\sum_{i=1}^n \norm{h_i^t - \nabla f_i(z^{t})}^2\right).
  \end{align*}
  Note that $0 \leq \nu_{t+1} \leq \nu_{t}$ for all $t \geq 0,$ since $\theta_{t+1}$ is a non-increasing sequence (see Lemma~\ref{lemma:learning_rates} and the definition of $\nu_{t}$ in \eqref{eq:nu_sol}). Using 
  $$\theta_{t+1} \leq \frac{1}{4}\min\left\{1, \frac{\alpha}{p}, \frac{\tau}{p}, \frac{\beta}{p}\right\}$$ for all $t \geq 0,$ we obtain
  \begin{align*}
    &\ExpSub{t}{f(z^{t+1}) - f(x^*)} + \kappa \ExpSub{t}{\frac{1}{n}\sum_{i=1}^n \norm{h_i^{t+1} - \nabla f_i(z^{t+1})}^2} + \nu_{t+1} \ExpSub{t}{\norm{w^{t+1} - u^{t+1}}^2}\\
    &\quad  + \rho \ExpSub{t}{\norm{k^{t+1} - \nabla f(z^{t+1})}^2} + \lambda \ExpSub{t}{\norm{v^{t+1} - \nabla f(z^{t+1})}^2}\\
    &\leq (1 - p\theta_{t+1}) \Bigg( f(z^{t}) - f(x^*) + \kappa \left(\frac{1}{n}\sum_{i=1}^n \norm{h_i^t - \nabla f_i(z^{t})}^2\right)  + \nu_{t} \norm{w^t - u^t}^2 \\
    &\qquad\qquad\qquad\qquad + \rho \norm{k^{t} - \nabla f(z^t)}^2 + \lambda \norm{v^{t} - \nabla f(z^t)}^2 \Bigg)\\
    &\quad+ p\theta_{t+1} \left(\frac{\bar{L} + \Gamma_{t} \mu}{2 \gamma_{t+1}} \norm{u^t - x^*}^2 - \frac{\bar{L} + \Gamma_{t+1} \mu}{2 \gamma_{t+1}} \ExpSub{t}{\norm{u^{t+1} - x^*}^2}\right).
  \end{align*}
  Let us multiply the inequality by $\frac{\gamma_{t+1}}{p\theta_{t+1}}:$
  \begin{align*}
    &\frac{\gamma_{t+1}}{p\theta_{t+1}}\left(\ExpSub{t}{f(z^{t+1}) - f(x^*)} + \kappa \ExpSub{t}{\frac{1}{n}\sum_{i=1}^n \norm{h_i^{t+1} - \nabla f_i(z^{t+1})}^2} + \nu_{t+1} \ExpSub{t}{\norm{w^{t+1} - u^{t+1}}^2}\right.\\
    &\quad  \left.+ \rho \ExpSub{t}{\norm{k^{t+1} - \nabla f(z^{t+1})}^2} + \lambda \ExpSub{t}{\norm{v^{t+1} - \nabla f(z^{t+1})}^2}\right)\\
    &\leq \left(\frac{\gamma_{t+1}}{p\theta_{t+1}} - \gamma_{t+1}\right) \Bigg( f(z^{t}) - f(x^*) + \kappa \left(\frac{1}{n}\sum_{i=1}^n \norm{h_i^t - \nabla f_i(z^{t})}^2\right)  + \nu_{t} \norm{w^t - u^t}^2 \\
    &\qquad\qquad\qquad\qquad + \rho \norm{k^{t} - \nabla f(z^t)}^2 + \lambda \norm{v^{t} - \nabla f(z^t)}^2 \Bigg)\\
    &\quad+ \left(\frac{\bar{L} + \Gamma_{t} \mu}{2} \norm{u^t - x^*}^2 - \frac{\bar{L} + \Gamma_{t+1} \mu}{2} \ExpSub{t}{\norm{u^{t+1} - x^*}^2}\right).
  \end{align*}
  It is left to use that $\Gamma_{t+1} \eqdef \Gamma_{t} + \gamma_{t+1}$ and $\gamma_{t+1} = p \theta_{t+1} \Gamma_{t+1}$ (see Lemma~\ref{lemma:learning_rates}) and take the full expectation to obtain \eqref{eq:main_theorem}.

  In the proof we require that, for the parameter $\bar{L},$ the inequalities from Sections~\ref{sec:sym_comp_kapap_const}, \ref{sec:sym_comp_rho_const}, \ref{sec:sym_comp_bregman_const} and \ref{sec:sym_comp_dist_const} hold. In Section~\ref{sec:sym_comp_check}, we show that these inequalities follow from \eqref{eq:constr_lipts}.
\end{proof}

\subsection{Strongly-convex case}
\begin{theorem}
  \label{theorem:main_theorem_strongly}
  Suppose that Assumptions~\ref{ass:lipschitz_constant}, \ref{ass:workers_lipschitz_constant}, \ref{ass:convex}, \ref{ass:unbiased_compressors} hold. Let
\begin{align}
  \liptconstraint,
\end{align}
$\beta = \frac{1}{\omega + 1},$ $\theta_{\min} = \frac{1}{4}\min\left\{1, \frac{\alpha}{p}, \frac{\tau}{p}, \frac{\beta}{p}\right\},$ $h_i^0 = \nabla f_i(z^{0})$ for all $i \in [n],$ $w^0 = u^0,$ $k^{0} = \nabla f(z^0),$ $v^{0} = \nabla f(z^0),$ and $\Gamma_0 \geq 1.$
Then Algorithm~\ref{alg:bi_diana} guarantees that
\begin{equation}
\begin{aligned}
  \label{eq:main_theorem_strongly}
  \Exp{f(z^{T}) - f(x^*)} + \frac{\mu}{2} \Exp{\norm{u^{T} - x^*}^2} \leq 2\exp\left(-\frac{T}{Q}\right)\left(\left(f(z^{0}) - f(x^*)\right) + \left(\frac{\bar{L}}{\Gamma_{0}} + \mu\right) \norm{u^0 - x^*}^2\right),
\end{aligned}
\end{equation}
where
\begin{align*}
  &Q \eqdef 2 \times \sqrt{\liptconstraintconst} \times \\
  &\max\Bigg\{\sqrt{\frac{L}{\alpha p \mu}},\sqrt{\frac{L}{\alpha \tau \mu}},\sqrt{\frac{\sqrt{L L_{\max}} (\omega + 1) \sqrt{\omega \tau}}{\alpha \sqrt{n} \mu}},\sqrt{\frac{\sqrt{L L_{\max}} \sqrt{\omega + 1} \sqrt{\omega \tau}}{\alpha \sqrt{p} \sqrt{n} \mu}},\sqrt{\frac{L_{\max} \omega (\omega + 1)^2 p}{n \mu}}, \sqrt{\frac{L_{\max} \omega}{n p \mu}}, \\
  & \qquad \quad \frac{1}{\alpha}, \frac{1}{\tau}, (\omega + 1), \frac{1}{p}\Bigg\}.
\end{align*}
\end{theorem}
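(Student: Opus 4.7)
The plan is to iterate the one-step Lyapunov contraction provided by Theorem~\ref{theorem:main_theorem}, telescope from $t=0$ to $t=T-1$, evaluate the initial Lyapunov using the stated initializations, and finally convert the resulting $\Gamma_0/\Gamma_T$ factor into an exponential decay via the growth estimate for $\Gamma_t$ in Lemma~\ref{lemma:learning_rates}, item~4.

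Concretely, first I would denote by $\Psi_t$ the bracketed Lyapunov quantity in \eqref{eq:main_theorem}. The stated initializations ($h_i^0=\nabla f_i(z^0)$, $w^0=u^0$, $k^0=\nabla f(z^0)$, $v^0=\nabla f(z^0)$) make each of the four variance-type terms in $\Psi_0$ vanish, leaving $\Psi_0=f(z^0)-f(x^*)$. Since \eqref{eq:main_theorem} holds for every $t\geq 0$, iterating it yields
\begin{equation*}
\Gamma_T\,\Exp{\Psi_T}+\frac{\bar L+\Gamma_T\mu}{2}\Exp{\norm{u^T-x^*}^2}\leq \Gamma_0\bigl(f(z^0)-f(x^*)\bigr)+\frac{\bar L+\Gamma_0\mu}{2}\norm{u^0-x^*}^2.
\end{equation*}
Dropping the (nonnegative) variance terms from $\Psi_T$ on the left and the term $\bar L/\Gamma_T\cdot\Exp{\norm{u^T-x^*}^2}/2$ as well, then dividing by $\Gamma_T$ and factoring $\Gamma_0$, I obtain
\begin{equation*}
\Exp{f(z^T)-f(x^*)}+\frac{\mu}{2}\Exp{\norm{u^T-x^*}^2}\leq \frac{\Gamma_0}{\Gamma_T}\left[\bigl(f(z^0)-f(x^*)\bigr)+\frac{\bar L/\Gamma_0+\mu}{2}\norm{u^0-x^*}^2\right].
\end{equation*}

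It remains to show $\Gamma_0/\Gamma_T\leq 2\exp(-T/Q)$. For this I would invoke Lemma~\ref{lemma:learning_rates}, item~4, which gives $\Gamma_T\geq(\Gamma_0/2)\exp(T\eta)$ with $\eta\eqdef\min\{\sqrt{p\mu/(4\bar L)},\,p\theta_{\min}\}$; the prerequisites $\Gamma_0\geq 1$ and $\bar L\geq\mu$ are part of the hypotheses (the latter follows from $\bar L\geq 660508\,L/\alpha\geq L\geq\mu$). Unpacking $\sqrt{p\mu/(4\bar L)}=\tfrac12\sqrt{p\mu/\bar L}$ and substituting the componentwise maximum in \eqref{eq:constr_lipts} turns this expression into a minimum over the six quantities of the form $\sqrt{\alpha p\mu/L}$, $\sqrt{\alpha\tau\mu/L}$, $\sqrt{\alpha\sqrt n\mu/(\sqrt{LL_{\max}}(\omega+1)\sqrt{\omega\tau})}$, etc., each with a common $1/(2\sqrt{660508})$ prefactor; these match term-by-term with the reciprocals of the first six entries of the max defining $Q$. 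Similarly, $p\theta_{\min}=\tfrac14\min\{p,\alpha,\tau,\beta\}$ matches (up to the absolute constant $4 < 2\sqrt{660508}$) the reciprocals of the last four entries $1/\alpha,1/\tau,(\omega+1),1/p$ of $Q$. Consequently $\eta\geq 1/Q$, and combining this with the bound above and the trivial inequality $\tfrac12(\bar L/\Gamma_0+\mu)\leq(\bar L/\Gamma_0+\mu)$ produces \eqref{eq:main_theorem_strongly}.

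The main obstacle is purely bookkeeping: carefully verifying that the six ``smooth'' terms in the max of \eqref{eq:constr_lipts} generate exactly the six square-root terms of $Q$, and that none of the four ``non-smooth'' terms $1/\alpha,1/\tau,(\omega+1),1/p$ is lost when comparing the constants $\tfrac14$ (from $p\theta_{\min}$) and $1/(2\sqrt{660508})$ (from the $\bar L$-branch). Everything else is a direct application of Theorems \ref{theorem:main_theorem} and Lemma~\ref{lemma:learning_rates}.
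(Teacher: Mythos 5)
Your proposal is correct and follows essentially the same route as the paper: telescope the Lyapunov inequality of Theorem~\ref{theorem:main_theorem}, use the initializations to annihilate the variance terms at $t=0$, drop nonnegative terms, and invoke item~4 of Lemma~\ref{lemma:learning_rates} to lower-bound $\Gamma_T$, with the final constant-matching between $\min\{\sqrt{p\mu/(4\bar L)},\,p\theta_{\min}\}$ and $1/Q$ being exactly the paper's closing step ``use the definitions of $\bar L$ and $\theta_{\min}$.'' Your explicit check that $\bar L\geq\mu$ (needed for Lemma~\ref{lemma:learning_rates}) and the comparison $4\leq 2\sqrt{660508}$ are details the paper leaves implicit, and both are handled correctly.
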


\begin{remark}
  Up to a constant factor of $2$, one can see that the optimal $\Gamma_0$ in \eqref{eq:main_theorem_strongly} from Theorem~\ref{theorem:main_theorem_strongly} equals $\Gamma_0 = \nicefrac{\bar{L}}{\mu}.$ But the dependence on $\Gamma_0$ is under the logarithm, so if the dependence on the logarithm is not critical, one can take any $\Gamma_0 \geq 1.$
\end{remark}

\begin{proof}
All conditions from Theorem~\ref{theorem:main_theorem} are satisfied. Let us sum the inequality \eqref{eq:main_theorem} for $t = 0$ to $T - 1:$
\begin{align*}
  &\Gamma_{T}\left(\Exp{f(z^{T}) - f(x^*)} + \kappa \Exp{\frac{1}{n}\sum_{i=1}^n \norm{h_i^{T} - \nabla f_i(z^{T})}^2} + \nu_{T} \Exp{\norm{w^{T} - u^{T}}^2}\right.\\
    &\quad  \left.+ \rho \Exp{\norm{k^{T} - \nabla f(z^{T})}^2} + \lambda \Exp{\norm{v^{T} - \nabla f(z^{T})}^2}\right)\\
    &\leq \Gamma_{0} \Bigg( \Exp{f(z^{0}) - f(x^*)} + \kappa \Exp{\frac{1}{n}\sum_{i=1}^n \norm{h_i^0 - \nabla f_i(z^{0})}^2}  + \nu_{0} \Exp{\norm{w^0 - u^0}^2} \\
    &\qquad\qquad + \rho \Exp{\norm{k^{0} - \nabla f(z^0)}^2} + \lambda \Exp{\norm{v^{0} - \nabla f(z^0)}^2} \Bigg)\\
    &\quad+ \left(\frac{\bar{L} + \Gamma_{0} \mu}{2} \Exp{\norm{u^0 - x^*}^2} - \frac{\bar{L} + \Gamma_{T} \mu}{2} \Exp{\norm{u^{T} - x^*}^2}\right).
\end{align*}
Using the initial conditions and the non-negativity of the terms, we get
\begin{align*}
  &\Gamma_{T}\Exp{f(z^{T}) - f(x^*)} + \frac{\Gamma_{T} \mu}{2} \Exp{\norm{u^{T} - x^*}^2} \leq \Gamma_{0} \left(f(z^{0}) - f(x^*)\right) + \frac{\bar{L} + \Gamma_{0} \mu}{2} \norm{u^0 - x^*}^2.
\end{align*}
Using Lemma~\ref{lemma:learning_rates}, we have
\begin{align*}
  &\Exp{f(z^{T}) - f(x^*)} + \frac{\mu}{2} \Exp{\norm{u^{T} - x^*}^2} \\
  &\leq \exp\left(-T \min\left\{\sqrt{\frac{p \mu}{4 \bar{L}}}, p\theta_{\min}\right\}\right)\left(2 \left(f(z^{0}) - f(x^*)\right) + \left(\frac{\bar{L}}{\Gamma_{0}} + \mu\right) \norm{u^0 - x^*}^2\right).
\end{align*}
It is left to use the definitions of $\bar{L}$ and $\theta_{\min}.$
\end{proof}

\subsection{General convex case}
Let us use Theorem~\ref{theorem:main_theorem} to analyze the general convex case ($\mu$ can possibly be equal to zero):
\begin{theorem}
  \label{theorem:main_theorem_general_convex}
  Suppose that Assumptions~\ref{ass:lipschitz_constant}, \ref{ass:workers_lipschitz_constant}, \ref{ass:convex}, \ref{ass:unbiased_compressors} hold. Let
\begin{align}
  \liptconstraint,
\end{align}
$\beta = \frac{1}{\omega + 1},$ $\theta_{\min} = \frac{1}{4}\min\left\{1, \frac{\alpha}{p}, \frac{\tau}{p}, \frac{\beta}{p}\right\},$ $h_i^0 = \nabla f_i(z^{0})$ for all $i \in [n],$ $w^0 = u^0,$ $k^{0} = \nabla f(z^0),$ $v^{0} = \nabla f(z^0),$ and $\Gamma_0 \in [1, \nicefrac{\bar{L}}{L}].$
Then Algorithm~\ref{alg:bi_diana} returns $\varepsilon$-solution, i.e., $\Exp{f(z^{T})}- f(x^*) \leq \varepsilon,$ after
\begin{equation}
\begin{aligned}
  \label{eq:main_theorem_general_convex}
    T = 
    \begin{cases}
      \Theta\left(\frac{1}{p \theta_{\min}} \log \frac{\bar{L} \norm{z^0 - x^*}^2}{\Gamma_0 \varepsilon}\right), &\frac{\bar{L} \norm{z^0 - x^*}^2}{\varepsilon} < \frac{1}{p \theta_{\min}^2} \\
      \Theta\left(\max\left\{\frac{1}{p \theta_{\min}} \log \frac{1}{\Gamma_0 p \theta_{\min}^2}, 0\right\} + Q\sqrt{\frac{\norm{z^0 - x^*}^2}{\varepsilon}}\right), & \textnormal{otherwise}
    \end{cases}
\end{aligned}
\end{equation}
iterations,
where
$$Q \eqdef \Theta\left(\max\left\{\sqrt{\frac{L}{\alpha p}}, \sqrt{\frac{L}{\alpha \tau}}, \sqrt{\frac{\sqrt{L L_{\max}} (\omega + 1)\sqrt{\omega \tau}}{\alpha \sqrt{n}}}, \sqrt{\frac{\sqrt{L L_{\max}} \sqrt{\omega + 1} \sqrt{\omega \tau}}{\alpha \sqrt{p} \sqrt{n}}}, \sqrt{\frac{L_{\max} \omega (\omega + 1)^2 p}{n}}, \sqrt{\frac{L_{\max} \omega}{n p}}\right\}\right).$$
\end{theorem}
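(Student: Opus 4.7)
The plan is to invoke Theorem~\ref{theorem:main_theorem} and unroll its one-step Lyapunov contraction. All of its hypotheses are met by the choices in the current statement, and the initial conditions $h_i^0 = \nabla f_i(z^0)$, $w^0 = u^0$, $k^0 = \nabla f(z^0)$, $v^0 = \nabla f(z^0)$ make every auxiliary term on the right-hand side of \eqref{eq:main_theorem} vanish at $t=0$. Telescoping \eqref{eq:main_theorem} from $t=0$ to $T-1$ and discarding the nonnegative $\kappa$, $\nu_T$, $\rho$, $\lambda$ terms and the $\tfrac{\bar{L}+\Gamma_T\mu}{2}\Exp{\norm{u^T - x^*}^2}$ term on the left (legal because $\mu\geq 0$), I obtain
\[
\Gamma_T \, \Exp{f(z^T) - f(x^*)} \;\leq\; \Gamma_0\bigl(f(z^0) - f(x^*)\bigr) + \tfrac{\bar{L}+\Gamma_0\mu}{2}\,\norm{u^0 - x^*}^2.
\]

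Next I would use $L$-smoothness plus optimality of $x^*$ to bound $f(z^0)-f(x^*)\leq \tfrac{L}{2}\norm{z^0-x^*}^2$, then use the hypothesis $\Gamma_0\leq \bar{L}/L$ (so $\Gamma_0 L\leq \bar{L}$) together with $u^0 = z^0 = x^0$ from the algorithm's initialization to collapse both right-hand-side terms into a single expression of the form
\[
\Exp{f(z^T) - f(x^*)} \;\leq\; \frac{C\,\bar{L}\,\norm{z^0 - x^*}^2}{\Gamma_T}
\]
for an absolute constant $C$; the $\mu$-dependent piece is absorbed via $\Gamma_0\mu\leq \bar{L}\,(\mu/L)\leq\bar{L}$ since $\mu\leq L$. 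Requiring the right-hand side to fall below $\varepsilon$ is equivalent to $\Gamma_T \gtrsim \bar{L}\norm{z^0-x^*}^2/\varepsilon$, and I invert this using the two-regime lower bound from Lemma~\ref{lemma:learning_rates}(5),
\[
\Gamma_T \;\geq\; \begin{cases} \tfrac{\Gamma_0}{2}\exp(T p\theta_{\min}), & T<\bar{t}, \\[2pt] \tfrac{1}{4p\theta_{\min}^2} + \tfrac{p(T-\bar{t})^2}{16}, & T\geq \bar{t}. \end{cases}
\]
Comparing the required threshold $\bar{L}\norm{z^0-x^*}^2/\varepsilon$ to the crossover value $1/(p\theta_{\min}^2)$ produces exactly the two cases in \eqref{eq:main_theorem_general_convex}: in the small-threshold regime, inverting the exponential branch yields $T = \Theta\bigl(\tfrac{1}{p\theta_{\min}}\log\tfrac{\bar{L}\norm{z^0-x^*}^2}{\Gamma_0\varepsilon}\bigr)$; in the other regime, I need $T\geq \bar{t}$ followed by $T-\bar{t} = \Theta\bigl(\sqrt{\bar{L}\norm{z^0-x^*}^2/(p\varepsilon)}\bigr)$, which matches the $\bar{t}$-offset plus $Q\sqrt{\norm{z^0-x^*}^2/\varepsilon}$ form. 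Finally, substituting $\beta = 1/(\omega+1)$ into the prescribed value of $\bar{L}$ and taking $\sqrt{\bar{L}/p}$ reproduces $Q$ verbatim.

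The main obstacle is bookkeeping rather than conceptual. I need to verify carefully that when $\bar{L}\norm{z^0-x^*}^2/\varepsilon$ is above the crossover $1/(p\theta_{\min}^2)$, the exponential branch cannot already deliver a sufficiently large $\Gamma_T$ before the index passes $\bar{t}$, so that the additive $\bar{t}$ term truly appears in the complexity; this is what the inequality $\bar{t}\leq \lceil \tfrac{1}{p\theta_{\min}}\log\tfrac{1}{2\Gamma_0 p\theta_{\min}^2}\rceil$ from Lemma~\ref{lemma:learning_rates} guarantees. A secondary care is required for $\mu>0$, where I must confirm that dropping $\tfrac{\Gamma_T\mu}{2}\Exp{\norm{u^T-x^*}^2}$ on the left and retaining $\tfrac{\Gamma_0\mu}{2}\norm{u^0-x^*}^2$ on the right does not spoil the constant in front of $\bar{L}\norm{z^0-x^*}^2$; here the assumption $\Gamma_0\leq \bar{L}/L$ is exactly what keeps the ratio $\Gamma_0\mu/\bar{L}$ bounded by $1$.
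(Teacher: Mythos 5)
Your proposal is correct and follows essentially the same route as the paper's proof: telescope the Lyapunov inequality of Theorem~\ref{theorem:main_theorem} with the auxiliary terms zeroed out by the initial conditions, bound the right-hand side by $2\bar{L}\norm{z^0-x^*}^2$ using $L$-smoothness, $\Gamma_0 \leq \nicefrac{\bar{L}}{L}$ and $\mu \leq L \leq \bar{L}$, and then invert the two-regime lower bound on $\Gamma_T$ from Lemma~\ref{lemma:learning_rates}(5), which is exactly how the paper obtains the two cases in \eqref{eq:main_theorem_general_convex} and identifies $Q = \Theta(\sqrt{\nicefrac{\bar{L}}{p}})$ after substituting $\beta = \nicefrac{1}{(\omega+1)}$.
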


\begin{remark}
  One can see that the optimal $\Gamma_0$ in \eqref{eq:main_theorem_general_convex} from Theorem~\ref{theorem:main_theorem_general_convex} equals $\Gamma_0 = \nicefrac{\bar{L}}{L}.$ But the dependence on $\Gamma_0$ is under the logarithm, so if the dependence on the logarithm is not critical, one can take any $\Gamma_0 \in [1, \nicefrac{\bar{L}}{L}].$
\end{remark}

\begin{proof}
All conditions from Theorem~\ref{theorem:main_theorem} are satisfied. Let us sum the inequality \eqref{eq:main_theorem} for $t = 0$ to $T - 1:$
\begin{align*}
  &\Gamma_{T}\left(\Exp{f(z^{T}) - f(x^*)} + \kappa \Exp{\frac{1}{n}\sum_{i=1}^n \norm{h_i^{T} - \nabla f_i(z^{T})}^2} + \nu_{T} \Exp{\norm{w^{T} - u^{T}}^2}\right.\\
    &\quad  \left.+ \rho \Exp{\norm{k^{T} - \nabla f(z^{T})}^2} + \lambda \Exp{\norm{v^{T} - \nabla f(z^{T})}^2}\right)\\
    &\leq \Gamma_{0} \Bigg( \Exp{f(z^{0}) - f(x^*)} + \kappa \Exp{\frac{1}{n}\sum_{i=1}^n \norm{h_i^0 - \nabla f_i(z^{0})}^2}  + \nu_{0} \Exp{\norm{w^0 - u^0}^2} \\
    &\qquad\qquad + \rho \Exp{\norm{k^{0} - \nabla f(z^0)}^2} + \lambda \Exp{\norm{v^{0} - \nabla f(z^0)}^2} \Bigg)\\
    &\quad+ \left(\frac{\bar{L} + \Gamma_{0} \mu}{2} \Exp{\norm{u^0 - x^*}^2} - \frac{\bar{L} + \Gamma_{T} \mu}{2} \Exp{\norm{u^{T} - x^*}^2}\right).
\end{align*}
Using the initial conditions, the non-negativity of the terms, and $\Gamma_0 \leq \nicefrac{\bar{L}}{L}$, we get
\begin{align*}
  \Gamma_{T}\Exp{f(z^{T}) - f(x^*)} &\leq \Gamma_{0} \left(f(z^{0}) - f(x^*)\right) + \frac{\bar{L} + \Gamma_{0} \mu}{2} \norm{u^0 - x^*}^2 \\
  &\leq \frac{\bar{L}}{L}\left(f(z^{0}) - f(x^*)\right) + \bar{L} \norm{z^0 - x^*}^2.
\end{align*}
Using the $L$--smoothness and $\mu \leq L \leq \bar{L},$ we have
\begin{align*}
  \Gamma_{T}\Exp{f(z^{T}) - f(x^*)} &\leq \bar{L} \norm{z^0 - x^*}^2 + \bar{L} \norm{z^0 - x^*}^2 \leq 2\bar{L} \norm{z^0 - x^*}^2.
\end{align*}
Using Lemma~\ref{lemma:learning_rates}, we have
\begin{align*}
  \begin{cases}
    \frac{\Gamma_0}{2} \exp \left(t p \theta_{\min}\right) \Exp{f(z^{T}) - f(x^*)} \leq 2\bar{L} \norm{z^0 - x^*}^2, &t < \bar{t} \\
    \frac{p (t - \bar{t})^2}{16} \Exp{f(z^{T}) - f(x^*)} \leq 2\bar{L} \norm{z^0 - x^*}^2, &t \geq \bar{t},
  \end{cases}
\end{align*}
where $\bar{t} \eqdef \max\left\{\left\lceil\frac{1}{p \theta_{\min}} \log \frac{1}{2 \Gamma_0 p \theta_{\min}^2}\right\rceil, 0\right\}.$ The last inequalities guarantees that Algorithm~\ref{alg:bi_diana} returns $\varepsilon$-solution after \eqref{eq:main_theorem_general_convex}
iterations.

\end{proof}

\subsection{Choosing optimal parameters}
\label{sec:opt_param}

\COROLLARYMAINREALISTIC*

\begin{proof}
    We implicitly assume that $p \in (0, 1]$ and $\tau \in (0, 1].$ Using \eqref{eq:abstract_comm_complexity} and \eqref{eq:main_complexity}, we have
    \begin{align*}
      \argmin_{p, \tau} \max\limits_{L_{\max} \in [L, n L]} \mathfrak{m}^{r}_{{\scriptscriptstyle \textnormal{new}}} = \argmin_{p, \tau} \max\limits_{L_{\max} \in [L, n L]} \widetilde{\Theta} \left((1 - r)K_{\omega} T + r \left(K_{\alpha} + p d\right) T\right).
    \end{align*}
    Note that only $T$ depends on $L_{\max}$ and $\tau.$ We have
    \begin{align*}
      &\min_{\tau} \max\limits_{L_{\max} \in [L, n L]} T \\
      &\overset{\eqref{eq:strong_complexity}}{=} \min_{\tau} \max\limits_{L_{\max} \in [L, n L]} \widetilde{\Theta}\Bigg(\max\Bigg\{\sqrt{\frac{L}{\alpha p \mu}},\sqrt{\frac{L}{\alpha \tau \mu}},\sqrt{\frac{\sqrt{L L_{\max}} (\omega + 1) \sqrt{\omega \tau}}{\alpha \sqrt{n} \mu}},\\
      & \qquad\quad \qquad \qquad \qquad \sqrt{\frac{\sqrt{L L_{\max}} \sqrt{\omega + 1} \sqrt{\omega \tau}}{\alpha \sqrt{p} \sqrt{n} \mu}},\sqrt{\frac{L_{\max} \omega (\omega + 1)^2 p}{n \mu}}, \sqrt{\frac{L_{\max} \omega}{n p \mu}}, \frac{1}{\alpha}, \frac{1}{\tau}, (\omega + 1), \frac{1}{p}\Bigg\}\Bigg) \\
      &= \min_{\tau} \widetilde{\Theta}\Bigg(\max\Bigg\{\sqrt{\frac{L}{\alpha p \mu}},\sqrt{\frac{L}{\alpha \tau \mu}},\sqrt{\frac{L (\omega + 1) \sqrt{\omega \tau}}{\alpha \mu}},\sqrt{\frac{L \sqrt{\omega + 1} \sqrt{\omega \tau}}{\alpha \sqrt{p} \mu}},\sqrt{\frac{L \omega (\omega + 1)^2 p}{\mu}}, \sqrt{\frac{L \omega}{p \mu}}, \\
      & \qquad\quad \qquad \qquad \frac{1}{\alpha}, \frac{1}{\tau}, (\omega + 1), \frac{1}{p}\Bigg\}\Bigg) \\
    \end{align*}
    The last term attains the minimum when 
    \begin{align}
    \label{eq:opt_tau}
    \tau = \min\left\{\frac{1}{\omega + 1}, \frac{p^{1/3}}{(\omega + 1)^{2/3}}\right\}.
    \end{align} Therefore, we get
    \begin{eqnarray}
    \begin{aligned}
      \label{eq:t_prime}
      &T' \eqdef \min_{\tau} \max\limits_{L_{\max} \in [L, n L]} T \\
      &= \widetilde{\Theta}\Bigg(\max\Bigg\{\sqrt{\frac{L}{\alpha p \mu}},\sqrt{\frac{L (\omega + 1)}{\alpha \mu}},\sqrt{\frac{L (\omega + 1)^{2/3}}{\alpha p^{1/3} \mu}},\sqrt{\frac{L \omega (\omega + 1)^2 p}{\mu}}, \sqrt{\frac{L \omega}{p \mu}}, \frac{1}{\alpha}, (\omega + 1), \frac{1}{p}\Bigg\}\Bigg),
    \end{aligned}
    \end{eqnarray}
    where we use that $$\frac{1}{\tau} = \max\left\{\omega + 1, \frac{(\omega + 1)^{2/3}}{p^{1/3}}\right\} \leq \max\left\{\omega + 1, \frac{2}{3}\left(\omega + 1\right) + \frac{1}{3 p}\right\} = \Theta\left(\max\left\{\left(\omega + 1\right), \frac{1}{p}\right\}\right).$$
    It is left to find 
    \begin{align}
      \argmin_{p} \left(\min_{\tau}\max\limits_{L_{\max} \in [L, n L]} \mathfrak{m}^{r}_{{\scriptscriptstyle \textnormal{new}}}\right) &= \argmin_{p} \widetilde{\Theta} \left((1 - r)K_{\omega} T' + r \left(K_{\alpha} + p d\right) T'\right) \nonumber\\
      &= \argmin_{p} \widetilde{\Theta} \left(A \times T' + B \times p T'\right), \label{eq:last_argmin_p}
    \end{align}
    where $A \eqdef (1 - r)K_{\omega} + r K_{\alpha} \geq 0$ and $B \eqdef r d \geq 0.$ Note that $A$ and $B$ do not depend on $p.$ 
    If $p \geq \nicefrac{A}{B},$ then $\widetilde{\Theta} \left(A T' + B p T'\right) = \widetilde{\Theta} \left(B p T'\right).$ The term $p T'$ is non-decreasing function w.r.t. $p.$ For $p \geq \nicefrac{A}{B},$ it means that an optimal point $p = \nicefrac{A}{B}.$ Thus the argmin \eqref{eq:last_argmin_p} is equivalent to $$\argmin_{p \in Q_0} \widetilde{\Theta} \left(A \times T' + B \times p T'\right) = \argmin_{p \in Q_0} \widetilde{\Theta} \left(T'\right),$$ where $Q_0 \eqdef \left\{p\,\middle|\, p \leq \frac{A}{B}\right\}.$
    Thus, we have
    \begin{align*}
      &\argmin\limits_{p \in Q_0} \widetilde{\Theta}\Bigg(\max\Bigg\{\sqrt{\frac{L}{\alpha p \mu}},\sqrt{\frac{L (\omega + 1)}{\alpha \mu}},\sqrt{\frac{L (\omega + 1)^{2/3}}{\alpha p^{1/3} \mu}},\sqrt{\frac{L \omega (\omega + 1)^2 p}{\mu}}, \sqrt{\frac{L \omega}{p \mu}}, \frac{1}{\alpha}, (\omega + 1), \frac{1}{p}\Bigg\}\Bigg).
    \end{align*}
    The next observation is that this argmin is non-decreasing when $p \geq \nicefrac{1}{\omega + 1}.$ It means that the minimum attains at some point $p \in Q_1 \eqdef \{p\,|\, p \leq \nicefrac{1}{\omega + 1}, p \in Q_0\}.$ Using this information, we can eliminate the redundant terms (for instance, $A\sqrt{\frac{L (\omega + 1)}{\alpha \mu}} \leq A\sqrt{\frac{L}{\alpha p \mu}}$ for $p \in Q_1$) and get an equivalent argmin
    \begin{align*}
      &\argmin\limits_{p \in Q_1}\widetilde{\Theta}\Bigg(\max\Bigg\{\sqrt{\frac{L}{\alpha p \mu}}, \sqrt{\frac{L \omega}{p \mu}}, \frac{1}{\alpha},  \frac{1}{p}\Bigg\}\Bigg),
    \end{align*}
    The last term attains the minimum at 
    \begin{align}
      \label{eq:opt_p}
      p = \min\left\{\frac{1}{\omega + 1}, \frac{A}{B}\right\}.
    \end{align}
    Using \eqref{eq:opt_p} and \eqref{eq:opt_tau}, an optimal $\tau$ is
    \begin{align}
      \label{eq:opt_tau_2}
      \tau = \frac{p^{1/3}}{(\omega + 1)^{2/3}}.
    \end{align}
    We substitute \eqref{eq:opt_p} and \eqref{eq:opt_tau_2} to \eqref{eq:strong_complexity} and obtain \eqref{eq:realistic_compl}. We use Lemma~\ref{lemma:lipt_constants} to eliminate the redundant terms. Note that $\mu^r_{\omega, \alpha} \eqdef \nicefrac{B}{A}.$
    
Using \eqref{eq:abstract_comm_complexity} and \eqref{eq:opt_p}, we have
    \begin{align*}
      \mathfrak{m}^{r}_{{\scriptscriptstyle \textnormal{realistic}}} &= (1 - r)K_{\omega} T_{{\scriptscriptstyle \textnormal{realistic}}} + r \left(K_{\alpha} + p d\right) T_{{\scriptscriptstyle \textnormal{realistic}}} + d\\
     &\leq (1 - r)K_{\omega} T_{{\scriptscriptstyle \textnormal{realistic}}} + r \left(K_{\alpha} + \frac{(1 - r)K_{\omega} + r K_{\alpha}}{r d} d\right) T_{{\scriptscriptstyle \textnormal{realistic}}} + d \\
     &= 2 (1 - r)K_{\omega} T_{{\scriptscriptstyle \textnormal{realistic}}} + 2 r K_{\alpha} T_{{\scriptscriptstyle \textnormal{realistic}}} + d.
    \end{align*}
    Note that
    \begin{align*}
      \mathfrak{m}^{r}_{{\scriptscriptstyle \textnormal{realistic}}} &= (1 - r)K_{\omega} T_{{\scriptscriptstyle \textnormal{realistic}}} + r \left(K_{\alpha} + p d\right) T_{{\scriptscriptstyle \textnormal{realistic}}} + d\\
     &\geq (1 - r)K_{\omega} T_{{\scriptscriptstyle \textnormal{realistic}}} + r K_{\alpha} T_{{\scriptscriptstyle \textnormal{realistic}}} + d.
    \end{align*}

\end{proof}

\COROLLARYMAINSTRONG*

\begin{proof}
  We implicitly assume that $p \in (0, 1]$ and $\tau \in (0, 1].$ We start the proof as in Theorem~\ref{cor:realistic}. Using \eqref{eq:abstract_comm_complexity} and \eqref{eq:main_complexity}, we have
    \begin{align*}
      \argmin_{p, \tau} \mathfrak{m}^{r}_{{\scriptscriptstyle \textnormal{new}}} = \argmin_{p, \tau} \widetilde{\Theta} \left((1 - r)K_{\omega} T + r \left(K_{\alpha} + p d\right) T\right).
    \end{align*}
  Unlike Theorem~\ref{cor:realistic}, we know the ratio $\nicefrac{L_{\max}}{L},$ thus
  \begin{align*}
    &\min_{\tau} T \\
    &\overset{\eqref{eq:strong_complexity}}{=} \min_{\tau} \widetilde{\Theta}\Bigg(\max\Bigg\{\sqrt{\frac{L}{\alpha p \mu}},\sqrt{\frac{L}{\alpha \tau \mu}},\sqrt{\frac{\sqrt{L L_{\max}} (\omega + 1) \sqrt{\omega \tau}}{\alpha \sqrt{n} \mu}},\\
    & \qquad\quad \qquad \qquad \quad \sqrt{\frac{\sqrt{L L_{\max}} \sqrt{\omega + 1} \sqrt{\omega \tau}}{\alpha \sqrt{p} \sqrt{n} \mu}},\sqrt{\frac{L_{\max} \omega (\omega + 1)^2 p}{n \mu}}, \sqrt{\frac{L_{\max} \omega}{n p \mu}}, \frac{1}{\alpha}, \frac{1}{\tau}, (\omega + 1), \frac{1}{p}\Bigg\}\Bigg)
  \end{align*}
  The last term attains the minimum when 
  \begin{align}
  \label{eq:opt_tau_know}
  \tau = \min\left\{1, \left(\frac{L n}{L_{\max}}\right)^{1/3} \min\left\{\frac{1}{\omega + 1}, \frac{p^{1/3}}{(\omega + 1)^{2/3}}\right\}\right\}.
  \end{align}
  Therefore, we get
    \begin{eqnarray}
    \begin{aligned}
      \label{eq:t_prime_know}
      &T' \eqdef \min_{\tau} T \\
      &= \widetilde{\Theta}\Bigg(\max\Bigg\{\sqrt{\frac{L}{\alpha p \mu}},\sqrt{\frac{L^{2/3} L_{\max}^{1/3} (\omega + 1)}{\alpha n^{1/3} \mu}}, \sqrt{\frac{L^{2/3} L_{\max}^{1/3} (\omega + 1)^{2/3}}{\alpha n^{1/3} p^{1/3} \mu}}, \sqrt{\frac{L_{\max} \omega (\omega + 1)^2 p}{n \mu}}, \sqrt{\frac{L_{\max} \omega}{n p \mu}}, \frac{1}{\alpha}, \omega + 1, \frac{1}{p}\Bigg\}\Bigg).
    \end{aligned}
    \end{eqnarray}
  It is left to find 
  \begin{align}
    \argmin_{p} \left(\min_{\tau} \mathfrak{m}^{r}_{{\scriptscriptstyle \textnormal{new}}}\right) &= \argmin_{p} \widetilde{\Theta} \left((1 - r)K_{\omega} T' + r \left(K_{\alpha} + p d\right) T'\right) \nonumber\\
    &= \argmin_{p} \widetilde{\Theta} \left(A \times T' + B \times p T'\right), \label{eq:last_argmin_p_know}
  \end{align}
  where $A \eqdef (1 - r)K_{\omega} + r K_{\alpha} \geq 0$ and $B \eqdef r d \geq 0.$ Note that $A$ and $B$ do not depend on $p.$ If $p \geq \nicefrac{A}{B},$ then $\widetilde{\Theta} \left(A T' + B p T'\right) = \widetilde{\Theta} \left(B p T'\right).$ The term $p T'$ is non-decreasing function w.r.t. $p.$ For $p \geq \nicefrac{A}{B},$ it means that an optimal point $p = \nicefrac{A}{B}.$ Thus the argmin \eqref{eq:last_argmin_p_know} is equivalent to $$\argmin_{p \in Q_0} \widetilde{\Theta} \left(A \times T' + B \times p T'\right) = \argmin_{p \in Q_0} \widetilde{\Theta} \left(T'\right),$$ where $Q_0 \eqdef \left\{p\,\middle|\, p \leq \frac{A}{B}\right\}.$
  Next, we have
  \begin{align}
    &\argmin_{p \in Q_0} \widetilde{\Theta} \left(T'\right) \nonumber\\
    &=\argmin\limits_{p \in Q_0} \nonumber\\
    &\widetilde{\Theta}\Bigg(\max\Bigg\{\sqrt{\frac{L}{\alpha p \mu}}, \sqrt{\frac{L^{2/3} L_{\max}^{1/3} (\omega + 1)}{\alpha n^{1/3} \mu}}, \sqrt{\frac{L^{2/3} L_{\max}^{1/3} (\omega + 1)^{2/3}}{\alpha n^{1/3} p^{1/3} \mu}}, \sqrt{\frac{L_{\max} \omega (\omega + 1)^2 p}{n \mu}}, \sqrt{\frac{L_{\max} \omega}{n p \mu}}, \frac{1}{\alpha}, (\omega + 1), \frac{1}{p}\Bigg\}\Bigg).\label{eq:proof_aux_p_monot}
  \end{align}

  For $p \geq \left(\frac{L n}{L_{\max}}\right)^{1/3} \frac{1}{\omega + 1},$ using Lemma~\ref{lemma:lipt_constants}, we have $p \geq \frac{1}{\omega + 1},$
  \begin{align*}
    &\sqrt{\frac{L}{\alpha p \mu}} \leq \sqrt{\frac{L^{2/3} L_{\max}^{1/3}  (\omega + 1)}{\alpha n^{1/3} \mu}},\\
    &\sqrt{\frac{L^{2/3} L_{\max}^{1/3} (\omega + 1)^{2/3}}{\alpha n^{1/3} p^{1/3} \mu}} \leq \sqrt{\frac{L^{2/3} L_{\max}^{1/3} (\omega + 1)}{\alpha n^{1/3} \mu}}\\
    &\sqrt{\frac{L_{\max} \omega}{n p \mu}} \leq \sqrt{\frac{L_{\max} \omega (\omega + 1)^2 p}{n \mu}}\\
    &\frac{1}{p} \leq \left(\frac{L_{\max}}{L n}\right)^{1/3} (\omega + 1) \leq (\omega + 1).\\
  \end{align*}
  It means that for $p \geq \left(\frac{L n}{L_{\max}}\right)^{1/3} \frac{1}{\omega + 1},$ the argmin \eqref{eq:proof_aux_p_monot} is equivalent to
  \begin{align*}
    &\argmin\limits_{p \in Q_0} \widetilde{\Theta}\Bigg(\max\Bigg\{\sqrt{\frac{L^{2/3} L_{\max}^{1/3} (\omega + 1)}{\alpha n^{1/3} \mu}}, \sqrt{\frac{L_{\max} \omega (\omega + 1)^2 p}{n \mu}}, \frac{1}{\alpha}, (\omega + 1)\Bigg\}\Bigg).
  \end{align*}

  Since all terms are non-increasing functions of $p,$ the minimum is attained at a point $p = \left(\frac{L n}{L_{\max}}\right)^{1/3} \frac{1}{\omega + 1}$ for all $p \geq \left(\frac{L n}{L_{\max}}\right)^{1/3} \frac{1}{\omega + 1}.$ Let us define $$Q_1 \eqdef \left\{p\,\middle|\, p \leq \left(\frac{L n}{L_{\max}}\right)^{1/3} \frac{1}{\omega + 1}, p \in Q_0\right\}.$$ The last observation means that the argmin \eqref{eq:proof_aux_p_monot} is equivalent to
  \begin{align}
    &\argmin\limits_{p \in Q_1} \nonumber\\
    &\widetilde{\Theta}\Bigg(\max\Bigg\{\sqrt{\frac{L}{\alpha p \mu}},\sqrt{\frac{L^{2/3} L_{\max}^{1/3} (\omega + 1)}{\alpha n^{1/3} \mu}}, \sqrt{\frac{L^{2/3} L_{\max}^{1/3} (\omega + 1)^{2/3}}{\alpha n^{1/3} p^{1/3} \mu}}, \sqrt{\frac{L_{\max} \omega (\omega + 1)^2 p}{n \mu}}, \sqrt{\frac{L_{\max} \omega}{n p \mu}}, \frac{1}{\alpha}, (\omega + 1), \frac{1}{p}\Bigg\}\Bigg) \nonumber\\
    &=\argmin\limits_{p \in Q_1} \widetilde{\Theta}\Bigg(\max\Bigg\{\sqrt{\frac{L}{\alpha p \mu}}, \sqrt{\frac{L_{\max} \omega (\omega + 1)^2 p}{n \mu}}, \sqrt{\frac{L_{\max} \omega}{n p \mu}}, \frac{1}{\alpha}, (\omega + 1), \frac{1}{p}\Bigg\}\Bigg), \label{eq:argmin_max_p}
  \end{align}

  where we eliminate the redundant terms using the additional information $p \leq \left(\frac{L n}{L_{\max}}\right)^{1/3} \frac{1}{\omega + 1}.$ In particular,
  \begin{align*}
    \sqrt{\frac{L^{2/3} L_{\max}^{1/3} (\omega + 1)}{\alpha n^{1/3} \mu}} \leq \sqrt{\frac{L}{\alpha p \mu}} \textnormal{ and } \sqrt{\frac{L^{2/3} L_{\max}^{1/3} (\omega + 1)^{2/3}}{\alpha n^{1/3} p^{1/3} \mu}} \leq \sqrt{\frac{L}{\alpha p \mu}}.
  \end{align*}
  for all $p \leq \left(\frac{L n}{L_{\max}}\right)^{1/3} \frac{1}{\omega + 1}.$ 
  Without the condition $p \in Q_1,$ the argmin \eqref{eq:argmin_max_p} attains the minimum at a point 
  \begin{align*}
    p = \max\left\{\frac{1}{\omega + 1}, \left(\frac{L n}{L_{\max}}\right)^{1/2}\frac{1}{\sqrt{\alpha} (\omega + 1)^{3/2}}\right\}.
  \end{align*}
  Considering the condition $p \in Q_1$ and $\mu^r_{\omega, \alpha} \eqdef \nicefrac{B}{A},$ we have
  \begin{align}
    \label{eq:opt_p_know}
    p = \min\left\{1, \frac{1}{\mu^r_{\omega, \alpha}}, \left(\frac{L n}{L_{\max}}\right)^{1/3} \frac{1}{\omega + 1}, \max\left\{\frac{1}{\omega + 1}, \left(\frac{L n}{L_{\max}}\right)^{1/2}\frac{1}{\sqrt{\alpha} (\omega + 1)^{3/2}}\right\}\right\}.
  \end{align}
  It is left carefully to substitute \eqref{eq:opt_p_know} to 
  \begin{align*}
    \widetilde{\Theta}\Bigg(\max\Bigg\{\sqrt{\frac{L}{\alpha p \mu}}, \sqrt{\frac{L_{\max} \omega}{n p \mu}}, \frac{1}{\alpha}, (\omega + 1), \frac{1}{p}\Bigg\}\Bigg)
  \end{align*}
  and obtain \eqref{eq:optimistic_compl}.
  The proof of $\mathfrak{m}^{r}_{{\scriptscriptstyle \textnormal{realistic}}} = \widetilde{\Theta}\left(\left((1 - r)K_{\omega} + r K_{\alpha}\right) T_{{\scriptscriptstyle \textnormal{realistic}}}\right)$ is the same as in Theorem~\ref{cor:realistic}.
\end{proof}

\subsection{Comparison with \algname{EF21 + DIANA}}
\label{sec:compare_ef21}
\THEOREMBETTEREF*

\begin{proof}
  Using the inequality of arithmetic and geometric means, i.e., $\sqrt{x y} \leq \frac{x + y}{2}$ for all $x, y \geq 0,$ and $L \geq \mu,$ we have
\begin{align*}
  \mathfrak{m}^{r}_{{\scriptscriptstyle \textnormal{realistic}}} &= \widetilde{\Theta}\Bigg(K_{\omega, \alpha}^r \Bigg(\sqrt{\frac{L (\omega + 1)}{\alpha \mu}} +  \sqrt{\frac{L_{\max} \omega (\omega + 1)}{n \mu}} + \sqrt{\frac{L \mu^r_{\omega, \alpha}}{\alpha \mu}} + \sqrt{\frac{L_{\max} \omega \mu^r_{\omega, \alpha}}{n \mu}} + \frac{1}{\alpha} + \omega + \mu^r_{\omega, \alpha}\Bigg) + d\Bigg)\\
  &= \widetilde{\cO}\Bigg(K_{\omega, \alpha}^r \Bigg(\frac{L}{\alpha \mu} + \frac{L_{\max} \omega}{n \mu} + \omega + \sqrt{\frac{L \mu^r_{\omega, \alpha}}{\alpha \mu}} + \sqrt{\frac{L_{\max} \omega \mu^r_{\omega, \alpha}}{n \mu}} + \mu^r_{\omega, \alpha}\Bigg) + d\Bigg).
\end{align*}
From the definition of $\mu^r_{\omega, \alpha} \eqdef \nicefrac{r d}{K_{\omega, \alpha}^r},$ we get
\begin{align*}
  \mathfrak{m}^{r}_{{\scriptscriptstyle \textnormal{realistic}}} = \widetilde{\cO}\Bigg(&K_{\omega, \alpha}^r \Bigg(\frac{L}{\alpha \mu} + \frac{L_{\max} \omega}{n \mu} + \omega\Bigg) + \Bigg(\sqrt{\frac{L K_{\omega, \alpha}^r \times r d}{\alpha \mu}} + \sqrt{\frac{L_{\max} \omega K_{\omega, \alpha}^r \times r d}{n \mu}} + rd\Bigg) + d\Bigg).
\end{align*}
Using the inequality of arithmetic and geometric means again and $r \leq 1$, we obtain
\begin{align*}
  \mathfrak{m}^{r}_{{\scriptscriptstyle \textnormal{realistic}}} = \widetilde{\cO}\Bigg(&K_{\omega, \alpha}^r \Bigg(\frac{L}{\alpha \mu} + \frac{L_{\max} \omega}{n \mu} + \omega\Bigg) + K_{\omega, \alpha}^r\Bigg(\frac{L}{\alpha \mu} + \frac{L_{\max} \omega}{n \mu}\Bigg) + d\Bigg).
\end{align*}
The last equality means that $\mathfrak{m}^{r}_{{\scriptscriptstyle \textnormal{realistic}}} = \widetilde{\cO}\left(\mathfrak{m}^{r}_{{\scriptscriptstyle \textnormal{EF21-P + DIANA}}}\right)$ for all $r \in [0, 1].$
\end{proof}

\subsection{Comparison with \algname{AGD}}

\THEOREMBETTERAGD*

\begin{proof}
  Consider that $r \in [0, 1/2],$ then $K_{\omega} = K,$ $K_{\alpha} = \min\{\lceil\nicefrac{1 - r}{r} K\rceil, d\}.$ Therefore, we have $$K_{\omega, \alpha}^r \eqdef (1 - r)K_{\omega} + r K_{\alpha} \leq (1 - r)K + r \lceil\nicefrac{1 - r}{r} K\rceil \leq 3 (1 - r) K$$ and 
  $$K_{\omega, \alpha}^r \eqdef (1 - r)K_{\omega} + r K_{\alpha} \geq (1 - r)K.$$
  Using this observation, we obtain
  $\mu^r_{\omega, \alpha} \eqdef \frac{rd}{K_{\omega, \alpha}^r} \leq \frac{rd}{(1 - r) K} \leq \frac{d}{K}.$ Note that $\alpha \geq \nicefrac{K_{\alpha}}{d}$ and $\omega \leq \nicefrac{d}{K_{\omega}} - 1$ for Top$K$ and Rand$K$.
  Thus $\alpha \geq \min\left\{\frac{(1 - r) K}{r d}, 1\right\}$ and $\omega \leq \frac{d}{K} - 1.$ We substitute the bounds to \eqref{eq:acc_diana_total} and obtain
  \begin{align*}
    \mathfrak{m}^{r}_{{\scriptscriptstyle \textnormal{realistic}}} = \widetilde{\cO}\Bigg((1 - r) K \Bigg(\sqrt{\frac{d}{K}}\sqrt{\frac{L}{\mu}} + \frac{d}{K}\sqrt{\frac{r L}{(1 - r)\mu}} + \frac{d}{K}\sqrt{\frac{L_{\max}}{n \mu}} + \frac{r d}{(1 - r) K} + \frac{d}{K} + \frac{rd}{(1 - r) K}\Bigg) + d\Bigg).
  \end{align*}
  Since $r \in [0, 1/2]$ and $K \leq d,$ one can easily show that
  \begin{align*}
    \mathfrak{m}^{r}_{{\scriptscriptstyle \textnormal{realistic}}} = \widetilde{\cO}\Bigg(&d \sqrt{\frac{L}{\mu}} + d \sqrt{\frac{L_{\max}}{n \mu}} + d\Bigg).
  \end{align*}
  It is left to use Lemma~\ref{lemma:lipt_constants}, to get $\mathfrak{m}^{r}_{{\scriptscriptstyle \textnormal{realistic}}} = \widetilde{\cO}\left(d \sqrt{\frac{L}{\mu}}\right) = \widetilde{\cO}\left(\mathfrak{m}_{{\scriptscriptstyle \textnormal{AGD}}}\right)$ for all $r \in [0, 1/2].$
  
  Assume that $r \in (1/2, 1],$ then $K_{\omega} = \min\{\lceil\nicefrac{r}{1 - r} K \rceil, d\}$ and $K_{\alpha} = K.$ Using the same reasoning, we have 
  $$K_{\omega, \alpha}^r \eqdef (1 - r)K_{\omega} + r K_{\alpha} \leq (1 - r) \lceil\nicefrac{r}{1 - r} K \rceil + r K \leq 3 r K,$$
  $$K_{\omega, \alpha}^r \eqdef (1 - r)K_{\omega} + r K_{\alpha} \geq r K,$$
  $$\mu^r_{\omega, \alpha} \eqdef \frac{rd}{K_{\omega, \alpha}^r} \leq \frac{d}{K},$$
  $$\alpha \geq \frac{K}{d} \textnormal{ and } \omega \leq \max\left\{\frac{(1 - r) d}{r K}, 1\right\} - 1.$$ By substituting these inequalities to \eqref{eq:acc_diana_total}, we obtain
  \begin{align*}
    \mathfrak{m}^{r}_{{\scriptscriptstyle \textnormal{realistic}}} = \widetilde{\cO}\Bigg(r K \Bigg(\frac{d}{K}\sqrt{\frac{L}{\mu}} + \frac{d}{K}\sqrt{\frac{(1 - r) L_{\max} \omega}{r n \mu}} + \frac{d}{K} + \frac{(1 - r)d}{r K} + \frac{d}{K}\Bigg) + d\Bigg)
  \end{align*}
  Using Lemma~\ref{lemma:lipt_constants}, one can easily show that $\mathfrak{m}^{r}_{{\scriptscriptstyle \textnormal{realistic}}} = \widetilde{\cO}\left(d \sqrt{\frac{L}{\mu}}\right) = \widetilde{\cO}\left(\mathfrak{m}_{{\scriptscriptstyle \textnormal{AGD}}}\right)$ for all $r \in (1/2, 1].$
  \end{proof}

\section{Auxillary Inequalities For $\bar{L}$}
\newcommand{\constlipt}{c}
We now prove useful bounds for $\bar{L}.$
\begin{lemma}[Auxillary Inequalities]
  \label{lemma:aux_ineq_L}
  Assume that the constraint \eqref{eq:constr_lipts} hold, and a constant $c = \liptconstraintconst.$ Then

  \begin{tabularx}{1.1\linewidth}{XXX}
  \begin{equation}
    \bar{L} \geq \constlipt \frac{L_{\max} \omega p^2}{\beta^2 n} \label{eq:lipt:max_2}
  \end{equation}
  &
  \begin{equation}
    \bar{L} \geq \constlipt \frac{L_{\max} \omega p}{\beta n} \label{eq:lipt:max_1}
  \end{equation}
  &
  \begin{equation}
    \bar{L} \geq \constlipt \frac{L_{\max} \omega}{n} \label{eq:lipt:max}
  \end{equation}
  \end{tabularx}
  \begin{tabularx}{1.1\linewidth}{XXX}
    \begin{equation}
      \bar{L} \geq \constlipt\frac{\sqrt{L L_{\max}} p \sqrt{\omega \tau}}{\alpha \beta \sqrt{n}} \label{eq:lipt:l_l_max_2}
    \end{equation}
    &
    \begin{equation}
      \bar{L} \geq \constlipt\frac{\sqrt{L L_{\max}} \sqrt{p} \sqrt{\omega \tau}}{\alpha \sqrt{\beta} \sqrt{n}} \label{eq:lipt:l_l_max_1}
    \end{equation}
    &
    \begin{equation}
      \bar{L} \geq \constlipt\frac{\sqrt{L L_{\max}} \sqrt{p} \sqrt{\omega \tau}}{\alpha \sqrt{n}} \label{eq:lipt:l_l_max_p}
    \end{equation}
  \end{tabularx}
  \begin{tabularx}{1.1\linewidth}{XXX}
    \begin{equation}
      \bar{L} \geq \constlipt\frac{\widehat{L} p \sqrt{\omega \tau}}{\alpha \beta \sqrt{n}} \label{eq:lipt:hat_2}
    \end{equation}
    &
    \begin{equation}
      \bar{L} \geq \constlipt\frac{\widehat{L} \sqrt{p} \sqrt{\omega \tau}}{\alpha \sqrt{\beta} \sqrt{n}} \label{eq:lipt:hat_1}
    \end{equation}
    &
    \begin{equation}
      \bar{L} \geq \constlipt\frac{\widehat{L} \sqrt{p} \sqrt{\omega \tau}}{\alpha \sqrt{n}} \label{eq:lipt:hat_p}
    \end{equation}
  \end{tabularx}
  \begin{tabularx}{1.1\linewidth}{XXX}
    \begin{equation}
      \bar{L} \geq \constlipt\frac{\widehat{L} p \sqrt{\omega}}{\sqrt{\alpha} \beta \sqrt{n}} \label{eq:lipt:hat_alpha_2}
    \end{equation}
    &
    \begin{equation}
      \bar{L} \geq \constlipt\frac{\widehat{L} \sqrt{p} \sqrt{\omega}}{\sqrt{\alpha} \sqrt{\beta} \sqrt{n}} \label{eq:lipt:hat_alpha_1}
    \end{equation}
    &
  \end{tabularx}
  \begin{tabularx}{1.1\linewidth}{XXX}
    \begin{equation}
      \bar{L} \geq \constlipt\frac{\widehat{L} p \sqrt{\omega}}{\beta \sqrt{n}} \label{eq:lipt:hat_no_alpha_2}
    \end{equation}
    &
    \begin{equation}
      \bar{L} \geq \constlipt\frac{\widehat{L} \sqrt{p \omega}}{\sqrt{\beta n}} \label{eq:lipt:hat_no_alpha_1}
    \end{equation}
    &
  \end{tabularx}
  \begin{tabularx}{1.1\linewidth}{XXX}
    \begin{equation}
      \bar{L} \geq \constlipt\frac{L}{\alpha} \label{eq:lipt:plain}
    \end{equation}
    &
    \begin{equation}
      \bar{L} \geq \constlipt\frac{L p}{\alpha \tau} \label{eq:lipt:plain_p_alpha}
    \end{equation}
    &
    \begin{equation}
      \bar{L} \geq \constlipt L \label{eq:lipt:plain_no_alpha}
    \end{equation}
  \end{tabularx}
  \begin{tabularx}{1.1\linewidth}{XXX}
    \begin{equation}
      \bar{L} \geq \constlipt \left(\frac{L \widehat{L}^2 \omega p^4}{\alpha^2 \beta^2 n \tau^2}\right)^{1/3} \label{eq:lipt:double_lipt_2}
    \end{equation}
    &
    \begin{equation}
      \bar{L} \geq \constlipt \left(\frac{L \widehat{L}^2 \omega p^3}{\alpha^2 \beta n \tau^2}\right)^{1/3} \label{eq:lipt:double_lipt_1}
    \end{equation}
    &
  \end{tabularx}
\end{lemma}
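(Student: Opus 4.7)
The plan is to verify each of the twelve inequalities as an easy consequence of the single constraint \eqref{eq:constr_lipts}, which gives six "base" lower bounds on $\bar{L}/c$, plus two size constraints: $\widehat{L}\leq\sqrt{LL_{\max}}$ from Lemma~\ref{lemma:lipt_constants}, and $\alpha,\beta,p,\tau\in(0,1]$ (with $\beta=1/(\omega+1)$).

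First I would dispense with the bounds that simply appear in the defining max: \eqref{eq:lipt:max_2}, \eqref{eq:lipt:max}, \eqref{eq:lipt:l_l_max_2}, \eqref{eq:lipt:l_l_max_1}, \eqref{eq:lipt:plain}, and \eqref{eq:lipt:plain_p_alpha}. Next, I would deduce the "demoted" variants by dropping a factor that is $\leq 1$ or replacing $\sqrt{LL_{\max}}$ by $\widehat{L}$: \eqref{eq:lipt:plain_no_alpha} from \eqref{eq:lipt:plain} via $\alpha\leq1$; \eqref{eq:lipt:l_l_max_p} from \eqref{eq:lipt:l_l_max_1} via $\sqrt{\beta}\leq1$; and \eqref{eq:lipt:hat_2}, \eqref{eq:lipt:hat_1}, \eqref{eq:lipt:hat_p} from \eqref{eq:lipt:l_l_max_2}, \eqref{eq:lipt:l_l_max_1}, and the just-established \eqref{eq:lipt:l_l_max_p} via $\widehat{L}\leq\sqrt{LL_{\max}}$.

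The remaining five bounds I would obtain by applying $\max(a,b)\geq\sqrt{ab}$ (and, for the last two, its triple-product analogue $\max(a,b,c)\geq(abc)^{1/3}$) to carefully chosen base bounds, then replacing $\sqrt{LL_{\max}}$ by $\widehat{L}$. Concretely: \eqref{eq:lipt:max_1} is the geometric mean of \eqref{eq:lipt:max_2} and \eqref{eq:lipt:max}; \eqref{eq:lipt:hat_alpha_2} and \eqref{eq:lipt:hat_no_alpha_2} come from pairing $L/\alpha$ (resp.\ $L$) with $L_{\max}\omega p^2/(\beta^2 n)$; and \eqref{eq:lipt:hat_alpha_1}, \eqref{eq:lipt:hat_no_alpha_1} come from pairing $L/\alpha$ (resp.\ $L$) with the already-derived \eqref{eq:lipt:max_1}. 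For the two cube-root bounds \eqref{eq:lipt:double_lipt_2} and \eqref{eq:lipt:double_lipt_1}, the combination that works is $\bar{L}^3\geq\bar{L}^2\cdot\bar{L}\geq c^3\bigl(Lp/(\alpha\tau)\bigr)^{2}\cdot\bigl(\widehat{L}^{2}\omega p^{2}/(L\beta^{2}n)\bigr)$ and similarly for \eqref{eq:lipt:double_lipt_1} with $\beta^2$ replaced by $\beta$ in the last factor; the last factor is itself bounded above by the corresponding base term because $\widehat{L}^{2}\leq LL_{\max}$, so \eqref{eq:lipt:max_2} and \eqref{eq:lipt:max_1} cover it.

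There is no real obstacle; the only thing to be careful about is bookkeeping—matching each target inequality to the exact pair (or triple) of base bounds whose geometric mean produces it. Writing the proof as a short table of derivations keeps the argument mechanical and short.
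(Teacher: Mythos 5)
Your proposal is correct and follows essentially the same route as the paper: the paper also reads the six base bounds off the max in \eqref{eq:constr_lipts}, demotes via $\alpha,\beta\leq 1$ and $\widehat{L}\leq\sqrt{LL_{\max}}$, and obtains the mixed bounds (including \eqref{eq:lipt:max_1}, \eqref{eq:lipt:hat_alpha_2}, \eqref{eq:lipt:hat_alpha_1} and the two cube-root bounds) from the same pairings/triplings of base terms, written additively via $\sqrt{xy}\leq\tfrac{x+y}{2}$ rather than your multiplicative $\bar{L}^3\geq\bar{L}^2\cdot\bar{L}$ form. One small correction to your bookkeeping for \eqref{eq:lipt:double_lipt_1}: the last factor must be $\widehat{L}^{2}\omega p/(L\beta n)$ (drop one power of $p$ as well as one of $\beta$), since keeping $p^{2}$ yields only the weaker bound with $p^{4}$ in place of $p^{3}$; with that factor the bound is indeed covered by \eqref{eq:lipt:max_1} via $\widehat{L}^{2}\leq LL_{\max}$, exactly as you indicate.
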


\begin{proof}
  The inequalities \eqref{eq:lipt:max_2} and \eqref{eq:lipt:max} follow from \eqref{eq:constr_lipts}. 
  The inequality \eqref{eq:lipt:max_1} follows from \eqref{eq:lipt:max_2} and \eqref{eq:lipt:max}:
  \begin{align*}
    \constlipt \frac{L_{\max} \omega p}{\beta n} \leq \constlipt \frac{L_{\max} \omega}{n} \left(\frac{1}{2} \times \frac{p^2}{\beta^2} + \frac{1}{2} \times 1^2\right) = \frac{\constlipt}{2} \times \frac{L_{\max} \omega p^2}{\beta^2 n} + \frac{\constlipt}{2} \times \frac{L_{\max} \omega}{n} \leq \bar{L}.
  \end{align*}
  The inequalities \eqref{eq:lipt:l_l_max_2} and \eqref{eq:lipt:l_l_max_1} follow from \eqref{eq:constr_lipts}. The inequality \eqref{eq:lipt:l_l_max_p} follows from \eqref{eq:lipt:l_l_max_1} and $\beta \in (0, 1]$:
  \begin{align*}
    \bar{L} \geq \constlipt\frac{\sqrt{L L_{\max}} \sqrt{p} \sqrt{\omega \tau}}{\alpha \sqrt{\beta} \sqrt{n}} \geq \constlipt\frac{\sqrt{L L_{\max}} \sqrt{p} \sqrt{\omega \tau}}{\alpha \sqrt{n}}.
  \end{align*}
  Using Lemma~\ref{lemma:lipt_constants}, \eqref{eq:lipt:l_l_max_2}, \eqref{eq:lipt:l_l_max_1}, and \eqref{eq:lipt:l_l_max_p}, the inequalities \eqref{eq:lipt:hat_2}, \eqref{eq:lipt:hat_1}, and \eqref{eq:lipt:hat_p} follow from
  \begin{align*}
    &\bar{L} \geq \constlipt\frac{\sqrt{L L_{\max}} p \sqrt{\omega \tau}}{\alpha \beta \sqrt{n}} \geq \constlipt\frac{\widehat{L} p \sqrt{\omega \tau}}{\alpha \beta \sqrt{n}},\\
    &\bar{L} \geq \constlipt\frac{\sqrt{L L_{\max}} \sqrt{p} \sqrt{\omega \tau}}{\alpha \sqrt{\beta} \sqrt{n}} \geq \constlipt\frac{\widehat{L} \sqrt{p} \sqrt{\omega \tau}}{\alpha \sqrt{\beta} \sqrt{n}},\\
    &\bar{L} \geq \constlipt\frac{\sqrt{L L_{\max}} \sqrt{p} \sqrt{\omega \tau}}{\alpha \sqrt{n}} \geq \constlipt\frac{\widehat{L} \sqrt{p} \sqrt{\omega \tau}}{\alpha \sqrt{n}}.
  \end{align*}
  Next, using Lemma~\ref{lemma:lipt_constants}, and $\frac{x+y}{2} \geq \sqrt{x y}$ for all $x, y \geq 0,$ the inequality \eqref{eq:lipt:hat_alpha_2} follows from
  \begin{align*}
    \constlipt\frac{\widehat{L} p \sqrt{\omega}}{\sqrt{\alpha} \beta \sqrt{n}} \leq \constlipt\frac{\sqrt{L L_{\max}} p \sqrt{\omega}}{\sqrt{\alpha} \beta \sqrt{n}} \leq \frac{\constlipt}{2} \times \frac{L}{\alpha} + \frac{\constlipt}{2} \times \frac{L_{\max} p^2 \omega}{\beta^2 n} \overset{\eqref{eq:constr_lipts}}{\leq} \bar{L}.
  \end{align*}
  The inequality \eqref{eq:lipt:hat_alpha_1} follows from
  \begin{align*}
    \constlipt\frac{\widehat{L} \sqrt{p} \sqrt{\omega}}{\sqrt{\alpha} \sqrt{\beta} \sqrt{n}} \leq \constlipt\frac{\sqrt{L L_{\max}} \sqrt{p} \sqrt{\omega}}{\sqrt{\alpha} \sqrt{\beta} \sqrt{n}} \leq \frac{\constlipt}{2} \times \frac{L}{\alpha}+ \frac{\constlipt}{2} \times \frac{L_{\max} p \omega}{\beta n} \overset{\eqref{eq:constr_lipts}, \eqref{eq:lipt:max_1}}{\leq} \bar{L}.
  \end{align*}
  The inequalities \eqref{eq:lipt:hat_no_alpha_2} and \eqref{eq:lipt:hat_no_alpha_1} follow from \eqref{eq:lipt:hat_alpha_2}, \eqref{eq:lipt:hat_alpha_1}, and $\alpha \in (0, 1]:$
  \begin{align*}
    &\bar{L} \geq \constlipt\frac{\widehat{L} p \sqrt{\omega}}{\sqrt{\alpha} \beta \sqrt{n}} \geq \constlipt\frac{\widehat{L} p \sqrt{\omega}}{\beta \sqrt{n}}, \\
    &\bar{L} \geq \constlipt\frac{\widehat{L} \sqrt{p} \sqrt{\omega}}{\sqrt{\alpha} \sqrt{\beta} \sqrt{n}} \geq \constlipt\frac{\widehat{L} \sqrt{p} \sqrt{\omega}}{\sqrt{\beta} \sqrt{n}}.
  \end{align*}
  The inequalities \eqref{eq:lipt:plain} and \eqref{eq:lipt:plain_p_alpha} follow from \eqref{eq:constr_lipts}, and \eqref{eq:lipt:plain_no_alpha} follows from \eqref{eq:lipt:plain} and $\alpha \in (0, 1].$
  Using Lemma~\ref{lemma:lipt_constants}, and $\frac{x+y+z}{3} \geq (x y z)^{1/3}$ for all $x, y, z\geq 0,$ the inequalities \eqref{eq:lipt:double_lipt_2} and \eqref{eq:lipt:double_lipt_1} follows from
  \begin{align*}
    &\constlipt \left(\frac{L \widehat{L}^2 \omega p^4}{\alpha^2 \beta^2 n \tau^2}\right)^{1/3} \leq \constlipt \left(\frac{L^2 L_{\max} \omega p^4}{\alpha^2 \beta^2 n \tau^2}\right)^{1/3} \leq \frac{\constlipt}{3} \times \frac{L p}{\alpha \tau} + \frac{\constlipt}{3} \times \frac{L p}{\alpha \tau} + \frac{\constlipt}{3} \times \frac{L_{\max} \omega p^2}{\beta^2 n} \overset{\eqref{eq:constr_lipts}}{\leq} \bar{L}, \\
    &\constlipt \left(\frac{L \widehat{L}^2 \omega p^3}{\alpha^2 \beta n \tau^2}\right)^{1/3} \leq \constlipt \left(\frac{L^2 L_{\max} \omega p^3}{\alpha^2 \beta n \tau^2}\right)^{1/3} \leq \frac{\constlipt}{3} \times \frac{L p}{\alpha \tau} + \frac{\constlipt}{3} \times \frac{L p}{\alpha \tau}+ \frac{\constlipt}{3} \times \frac{L_{\max} \omega p}{\beta n} \overset{\eqref{eq:constr_lipts}, \eqref{eq:lipt:max_1}}{\leq} \bar{L}.
  \end{align*}
\end{proof}

\section{Proof of Lemma~\ref{lemma:parameters} (\nameref{lemma:parameters})}
\label{sec:lemma_parameters}

We use the notations from the proof of Theorem~\ref{theorem:main_theorem}.
\LEMMAPARAMETERS*

\begin{proof}
  The inequalities \eqref{eq:parameters_task} are equivalent to 
  \begin{equation*}
    \begin{gathered}
      \frac{8 \theta_{t+1}^2}{\alpha} \left(\frac{pL}{2} + \kappa 4 p \left(1 + \frac{p}{\beta}\right) \widehat{L}^2 + \rho 4 p L^2 + \lambda \left(2 p \left(1 + \frac{2 p}{\tau}\right) L^2 + \frac{4 p \tau^2 \omega \widehat{L}^2}{n}\right) \right) \leq \nu_{t}, \\
      \nu_{t} \frac{8}{\alpha p} \left(\frac{\gamma_{t+1}}{\bar{L} + \Gamma_{t+1} \mu}\right)^2 \leq \rho, \\
      \rho\frac{8 p}{\tau} \leq \lambda, \\
      \frac{8 p \omega}{n \bar{L} \beta} + \nu_{t} \frac{8 \omega}{n \beta} \left(\frac{\gamma_{t+1}}{\bar{L} + \Gamma_{t+1} \mu}\right)^2 + \lambda \frac{8 \tau^2 \omega}{n \beta}\leq \kappa.
    \end{gathered}
    \end{equation*}
  Let us take 
  \begin{align}
    \lambda \eqdef \rho\frac{8 p}{\tau} 
    \label{eq:lambda_sol}
  \end{align}
    to ensure that the third inequality holds. It left to find the parameters such that
  \begin{equation*}
  \begin{gathered}
      \frac{8 \theta_{t+1}^2}{\alpha} \left(\frac{pL}{2} + \kappa 4 p \left(1 + \frac{p}{\beta}\right) \widehat{L}^2 + \rho 4 p L^2 + \rho\frac{8 p}{\tau} \left(2 p \left(1 + \frac{2 p}{\tau}\right) L^2 + \frac{4 p \tau^2 \omega \widehat{L}^2}{n}\right) \right) \leq \nu_{t}, \\
      \nu_{t} \frac{8}{\alpha p} \left(\frac{\gamma_{t+1}}{\bar{L} + \Gamma_{t+1} \mu}\right)^2 \leq \rho, \\
      \frac{8 p \omega}{n \bar{L} \beta} + \nu_{t} \frac{8 \omega}{n \beta} \left(\frac{\gamma_{t+1}}{\bar{L} + \Gamma_{t+1} \mu}\right)^2 + \rho\frac{8 p}{\tau} \cdot \frac{8 \tau^2 \omega}{n \beta}\leq \kappa.
  \end{gathered}
  \end{equation*}
  Let us take 
  \begin{align}
    \nu_{t} &\eqdef \theta_{t+1}^2 \widehat{\nu}(\kappa, \rho),
    \label{eq:nu_sol}
  \end{align}
  where we additionally define 
  \begin{align}
    \label{eq:nu_sol_hat}
    \widehat{\nu} \equiv \widehat{\nu}(\kappa, \rho) &\eqdef \frac{8}{\alpha} \left(\frac{pL}{2} + \kappa 4 p \left(1 + \frac{p}{\beta}\right) \widehat{L}^2 + \rho 4 p L^2 + \rho\frac{8 p}{\tau} \left(2 p \left(1 + \frac{2 p}{\tau}\right) L^2 + \frac{4 p \tau^2 \omega \widehat{L}^2}{n}\right) \right),
  \end{align}
    to ensure that the first inequality holds. It left to find the parameters $\kappa$ and $\rho$ such that
  \begin{equation*}
    \begin{gathered}
      \widehat{\nu}(\kappa, \rho) \frac{8}{\alpha p} \left(\frac{\gamma_{t+1} \theta_{t+1}}{\bar{L} + \Gamma_{t+1} \mu}\right)^2 \leq \rho, \\
      \frac{8 p \omega}{n \bar{L} \beta} + \widehat{\nu}(\kappa, \rho) \frac{8 \omega}{n \beta} \left(\frac{\gamma_{t+1} \theta_{t+1}}{\bar{L} + \Gamma_{t+1} \mu}\right)^2 + \rho\frac{8 p}{\tau} \cdot \frac{8 \tau^2 \omega}{n \beta}\leq \kappa.
    \end{gathered}
    \end{equation*}
    Using Lemma~\ref{lemma:learning_rates}, we have $\frac{\gamma_{t+1} \theta_{t+1}}{\bar{L} + \Gamma_{t+1} \mu} \leq \frac{\gamma_{t+1} \theta_{t+1}}{\bar{L} + \Gamma_{t} \mu} \leq \frac{1}{\bar{L}}$, so it is sufficient to show that stronger inequalities hold:
    \begin{align}
        \widehat{\nu}(\kappa, \rho) \frac{8}{\alpha p \bar{L}^2} \leq \rho, \label{eq:rho_ineq} \\
        \frac{8 p \omega}{n \bar{L} \beta} + \widehat{\nu}(\kappa, \rho) \frac{8 \omega}{n \beta \bar{L}^2} + \rho\frac{8 p}{\tau} \cdot \frac{8 \tau^2 \omega}{n \beta}\leq \kappa. \label{eq:kappa_ineq}
    \end{align}
    
    From this point all formulas in this lemma are generated by the script from Section~\ref{sec:jupyter_notebook} (see Section 4 in Section~\ref{sec:jupyter_notebook}). We use the SymPy library \citep{sympy}.

    Using the definition of $\widehat{\nu},$ the left hand side of \eqref{eq:kappa_ineq} equals
    \begin{dmath}
      \formulakappaexpand
    \end{dmath}
    where we grouped the terms w.r.t. $\kappa.$ Let us take $\bar{L}$ such that the bracket is less or equal to $1 / 2.$ We define the constraints in Section~\ref{sec:sym_comp_kapap_const}.
    Therefore, \eqref{eq:kappa_ineq} holds if
    \begin{align}
      \formulakappasol
    \end{align}
    Using the definition of $\widehat{\nu}$ and $\kappa,$ the left hand side of \eqref{eq:rho_ineq} equals
    \begin{dmath}
      \formularhoexpand
    \end{dmath}
    where we grouped the terms w.r.t. $\rho.$ Let us take $\bar{L}$ such that the bracket is less or equal to $1 / 2.$ We define the constraints in Section~\ref{sec:sym_comp_rho_const}.
    Therefore, \eqref{eq:rho_ineq} holds if
    \begin{align}
      \formularhosol
    \end{align}
    Finally, under the constraints from Sections~\ref{sec:sym_comp_kapap_const} and \ref{sec:sym_comp_rho_const} on $\bar{L}$, the choices of parameters \eqref{eq:rho_sol}, \eqref{eq:kappa_sol}, \eqref{eq:nu_sol} and \eqref{eq:lambda_sol} insure that \eqref{eq:parameters_task} holds.
\end{proof}

\section{Proof of Lemma~\ref{lemma:negative_parameters} (\nameref{lemma:negative_parameters})}
\label{sec:negative_parameters}

We use the notations from the proof of Theorem~\ref{theorem:main_theorem}.
\LEMMANEGATIVETERMS*

\begin{proof}
  Since $p \geq 0$ and $D_f(z^t, y^{t+1}) \geq 0$ for all $t \geq 0,$ the inequality \eqref{eq:lemma:negative_parameters:bregman} is satisfied if 
  \begin{equation*}
    \begin{aligned}
      &\frac{4 \omega L_{\max}}{n \bar{L}} + \kappa 8 \left(1 + \frac{p}{\beta}\right) L_{\max} + \nu_{t} \left(\frac{\gamma_{t+1}}{\bar{L} + \Gamma_{t+1} \mu}\right)^2\left(\frac{4 \omega L_{\max}}{p n} + \frac{8L}{p \alpha}\right) + \rho 8 L +  \\
    &\qquad\qquad+ \lambda \left(4 \left(1 + \frac{2 p}{\tau}\right) L + \frac{8 \tau^2 \omega L_{\max}}{n}\right) + \theta_{t+1} - 1 \leq 0.
    \end{aligned}
    \end{equation*}
    Note that $\theta_{t+1} \leq \frac{1}{4}$ for all $t \geq 0.$ Therefore, it is sufficient to show that
    \begin{equation*}
    \begin{aligned}
        &\frac{4 \omega L_{\max}}{n \bar{L}} + \kappa 8 \left(1 + \frac{p}{\beta}\right) L_{\max} + \nu_{t} \left(\frac{\gamma_{t+1}}{\bar{L} + \Gamma_{t+1} \mu}\right)^2\left(\frac{4 \omega L_{\max}}{p n} + \frac{8L}{p \alpha}\right) + \rho 8 L +  \\
      &\qquad\qquad+ \lambda \left(4 \left(1 + \frac{2 p}{\tau}\right) L + \frac{8 \tau^2 \omega L_{\max}}{n}\right) \leq \frac{3}{4}.
    \end{aligned}
    \end{equation*}
    In the view of \eqref{eq:nu_sol}, we have to show that
    \begin{equation*}
      \begin{aligned}
          &\frac{4 \omega L_{\max}}{n \bar{L}} + \kappa 8 \left(1 + \frac{p}{\beta}\right) L_{\max} + \widehat{\nu} \left(\frac{\gamma_{t+1} \theta_{t+1}}{\bar{L} + \Gamma_{t+1} \mu}\right)^2\left(\frac{4 \omega L_{\max}}{p n} + \frac{8L}{p \alpha}\right) + \rho 8 L +  \\
        &\qquad\qquad+ \lambda \left(4 \left(1 + \frac{2 p}{\tau}\right) L + \frac{8 \tau^2 \omega L_{\max}}{n}\right) \leq \frac{3}{4}.
    \end{aligned}
    \end{equation*}
    Using Lemma~\ref{lemma:learning_rates}, we have $\frac{\gamma_{t+1} \theta_{t+1}}{\bar{L} + \Gamma_{t+1} \mu} \leq \frac{\gamma_{t+1} \theta_{t+1}}{\bar{L} + \Gamma_{t} \mu} \leq \frac{1}{\bar{L}}$, so it is sufficient to show that
    \begin{equation}
      \begin{aligned}
          &\frac{4 \omega L_{\max}}{n \bar{L}} + \kappa 8 \left(1 + \frac{p}{\beta}\right) L_{\max} + \frac{\widehat{\nu}}{\bar{L}^2} \left(\frac{4 \omega L_{\max}}{p n} + \frac{8L}{p \alpha}\right) + \rho 8 L +  \\
        &\qquad\qquad+ \lambda \left(4 \left(1 + \frac{2 p}{\tau}\right) L + \frac{8 \tau^2 \omega L_{\max}}{n}\right) \leq \frac{3}{4}. \label{eq:bregman_where_to_sub}
    \end{aligned}
    \end{equation}
    
    From this point all formulas in this lemma are generated by the script from Section~\ref{sec:jupyter_notebook} (see Section 5 in Section~\ref{sec:jupyter_notebook}).

    Let us substitute \eqref{eq:rho_sol}, \eqref{eq:kappa_sol}, \eqref{eq:lambda_sol}, and \eqref{eq:nu_sol_hat} to the last inequality and obtain the inequality from Section~\ref{sec:sym_comp_bregman}.
    The conditions from Section~\ref{sec:sym_comp_bregman_const} insure that the inequality from Section~\ref{sec:sym_comp_bregman} holds. It left to prove \eqref{eq:lemma:negative_parameters:norm}. Since $p \geq 0,$ $\ExpSub{t}{\norm{u^{t+1} - u^t}^2} \geq 0$ and $\theta_{t+1}^2 \geq 0$ for all $t \geq 0,$ the inequality \eqref{eq:lemma:negative_parameters:norm} holds if 
    \begin{align}
      &\frac{4}{\bar{L}} \left(\frac{L}{2} + \kappa 4 \left(1 + \frac{p}{\beta}\right) \widehat{L}^2 + \rho 4 L^2 + \lambda \left(2 \left(1 + \frac{2 p}{\tau}\right) L^2 + \frac{4 \tau^2 \omega \widehat{L}^2}{n}\right) \right) \leq 1. \label{eq:dist_where_to_sub}
    \end{align}
    Let us substitute \eqref{eq:rho_sol}, \eqref{eq:kappa_sol} and \eqref{eq:lambda_sol} to the last inequality and obtain the inequality from Section~\ref{sec:sym_comp_dist}. The inequality from Section~\ref{sec:sym_comp_dist} holds if $\bar{L}$ satisfy the inequalities from Section~\ref{sec:sym_comp_dist_const}.
\end{proof}

\section{Symbolically Computed Constraints for $\bar{L}$ Such That The Term w.r.t. $\kappa$ is less or equal $\nicefrac{1}{2}$ in \eqref{eq:kappa_expand}}
\label{sec:sym_comp_kapap_const}
  \formulakappaconst

\section{Symbolically Computed Constraints for $\bar{L}$ Such That The Term w.r.t. $\rho$ is less or equal $\nicefrac{1}{2}$ in \eqref{eq:rho_expand}}
\label{sec:sym_comp_rho_const}
  \formularhoconst

\section{Symbolically Computed Expression \eqref{eq:bregman_where_to_sub}}
\label{sec:sym_comp_bregman}
\begin{align*}
  \formulabregman
\end{align*}

\section{Symbolically Computed Constraints for $\bar{L}$ Such That The Inequality from Section~\ref{sec:sym_comp_bregman} Holds}
\label{sec:sym_comp_bregman_const}
  \formulabregmanconst

\section{Symbolically Computed Expression \eqref{eq:dist_where_to_sub}}
\label{sec:sym_comp_dist}
\begin{align*}
  \formuladist
\end{align*}

\section{Symbolically Computed Constraints for $\bar{L}$ Such That The Inequality from Section~\ref{sec:sym_comp_dist} Holds}
\label{sec:sym_comp_dist_const}
  \formuladistconst

\section{Symbolical Check That The Constraints from Sections~\ref{sec:sym_comp_kapap_const}, \ref{sec:sym_comp_rho_const}, \ref{sec:sym_comp_bregman_const} and \ref{sec:sym_comp_dist_const} Follow From The Constraint \eqref{eq:constr_lipts}}
\label{sec:sym_comp_check}

Note that the inequalities from Lemma~\ref{lemma:aux_ineq_L} follow from \eqref{eq:constr_lipts}. Therefore, the inequalities from Sections~\ref{sec:sym_comp_kapap_const}, \ref{sec:sym_comp_rho_const}, \ref{sec:sym_comp_bregman_const} and \ref{sec:sym_comp_dist_const} follow from \eqref{eq:constr_lipts}, if they follow from the inequalities from Lemma~\ref{lemma:aux_ineq_L}. We now present it\footnote{"($a$) follows from ($b$),($c$)" means that one should use ($b$) and ($c$) to get ($a$). It is possible that $b = c$, thus one should apply $b$ (= $c$) two times.}. These results are checked and generated using the script in Section~\ref{sec:jupyter_notebook} (see Section 6 in Section~\ref{sec:jupyter_notebook})

\begin{align*}
  \formulafollows
\end{align*}

\includepdf[
    pages=-,
    pagecommand={\pagestyle{headings}},
    addtotoc={1,section,1,Jupyter Notebook for Symbolic Computations,sec:jupyter_notebook}]
{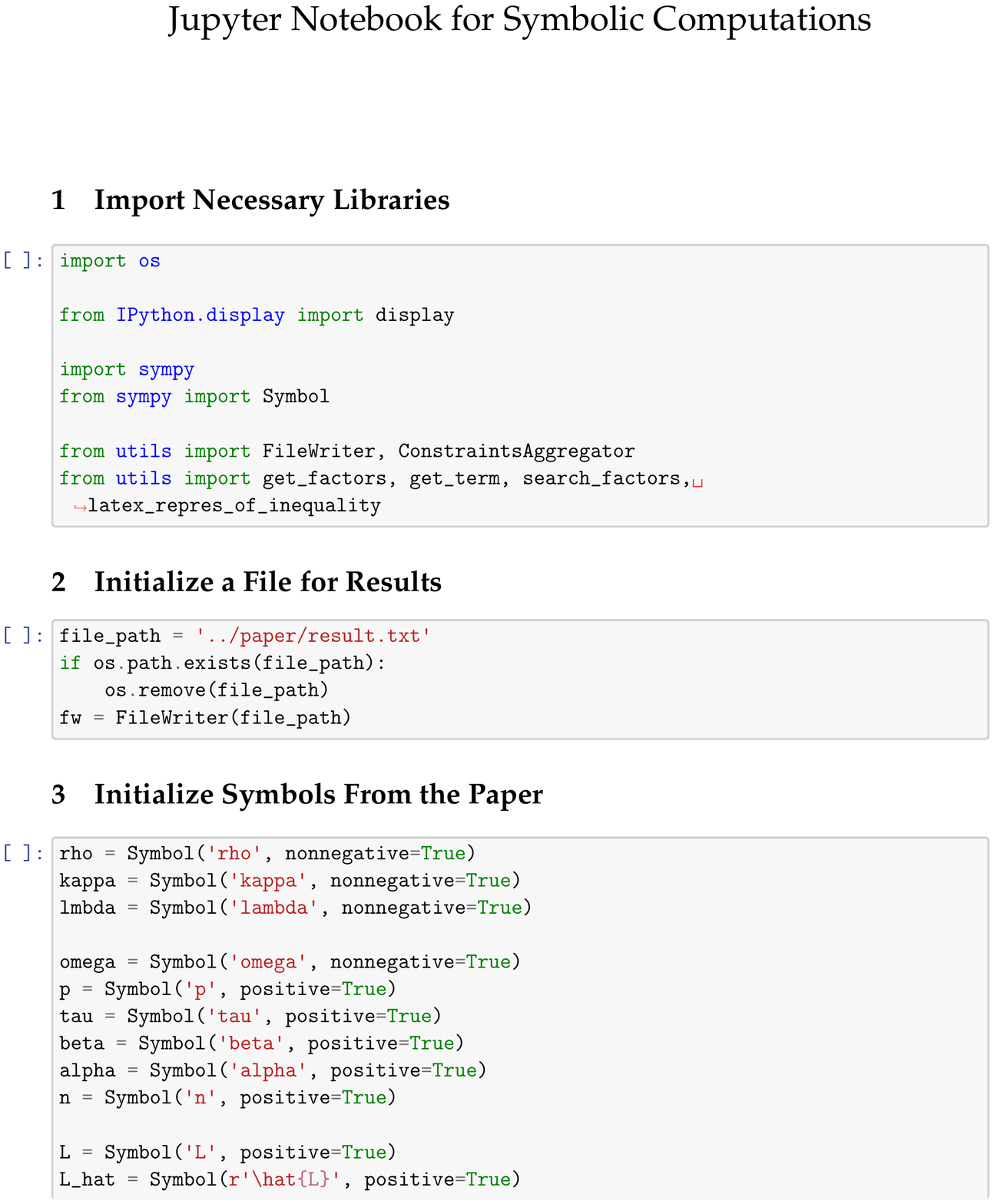}

\newpage
\subsection{File utils.py}

\definecolor{codegreen}{rgb}{0,0.6,0}
\definecolor{codegray}{rgb}{0.5,0.5,0.5}
\definecolor{codepurple}{rgb}{0.58,0,0.82}
\definecolor{backcolour}{rgb}{0.95,0.95,0.92}

\lstdefinestyle{mystyle}{
    backgroundcolor=\color{backcolour},   
    commentstyle=\color{codegreen},
    keywordstyle=\color{magenta},
    numberstyle=\tiny\color{codegray},
    stringstyle=\color{codepurple},
    basicstyle=\ttfamily\footnotesize,
    breakatwhitespace=false,         
    breaklines=true,                 
    captionpos=b,                    
    keepspaces=true,                 
    numbers=left,                    
    numbersep=5pt,                  
    showspaces=false,                
    showstringspaces=false,
    showtabs=false,                  
    tabsize=2,
    linewidth=17cm
}

\lstset{style=mystyle}

\lstinputlisting[language=Python]{utils.py}

\newpage
\section{Experiments}
\label{sec:experiments}
\subsection{Setup}

We now conduct experiments on the practical logistic regression task with LIBSVM datasets \citep{chang2011libsvm} (under the 3-clause BSD license). The experiments were implemented in Python 3.7.9. The distributed environment was emulated on machines with Intel(R) Xeon(R) Gold 6248 CPU @ 2.50GHz. In each plot we show the relations between the total number of coordinates transmitted from and to the server and function values. The parameters of the algorithms are taken as suggested by the corresponding theory, except for the stepsizes that we fine-tune from a set $\{2^i\,|\,i \in [-20, 20]\}$. For \algname{2Direction}, we use parameters from Theorem~\ref{cor:realistic} and finetune the step size $\bar{L}.$

We solve the logistic regression problem:
$$f_i(x_1, \dots, x_{c}) \eqdef -\frac{1}{m} \sum_{j=1}^m\log\left(\frac{\exp\left(a_{ij}^\top x_{y_{ij}}\right)}{\sum_{y=1}^c \exp\left(a_{ij}^\top x_{y}\right)}\right), $$
where $x_1, \dots, x_{c} \in \R^{d}$, $c$ is the number of unique labels,
$a_{ij} \in \R^{d}$ is a feature of a sample on the $i$\textsuperscript{th} worker, $y_{ij}$ is a corresponding label and $m$ is the number of samples located on the $i$\textsuperscript{th} worker. The Rand$K$ compressor is used to compress information from the workers to the server, the Top$K$ compressor is used to compress information from the server to the workers. The performance of algorithms is compared on CIFAR10 \citep{krizhevsky2009learning} (\# of features $ = 3072$, \# of samples equals $\num[group-separator={,}]{50000}$), and \textit{real-sim} (\# of features $ = 20958$, \# of samples equals $\num[group-separator={,}]{72309}$) datasets.

\subsection{Results}

In Figure~\ref{fig:real-sim}, \ref{fig:cifar10_10} and \ref{fig:cifar10_100} we provide empirical communication complexities of our experiments. For each algorithm, we show the three best experiments. The experiments are collaborative with our theory. \algname{2Direction} enjoys faster convergence rates than \algname{EF21-P + DIANA} and \algname{AGD}.

\begin{figure}[h]
  \centering
  \begin{subfigure}{.49\textwidth}
    \centering
    \includegraphics[width=\textwidth]{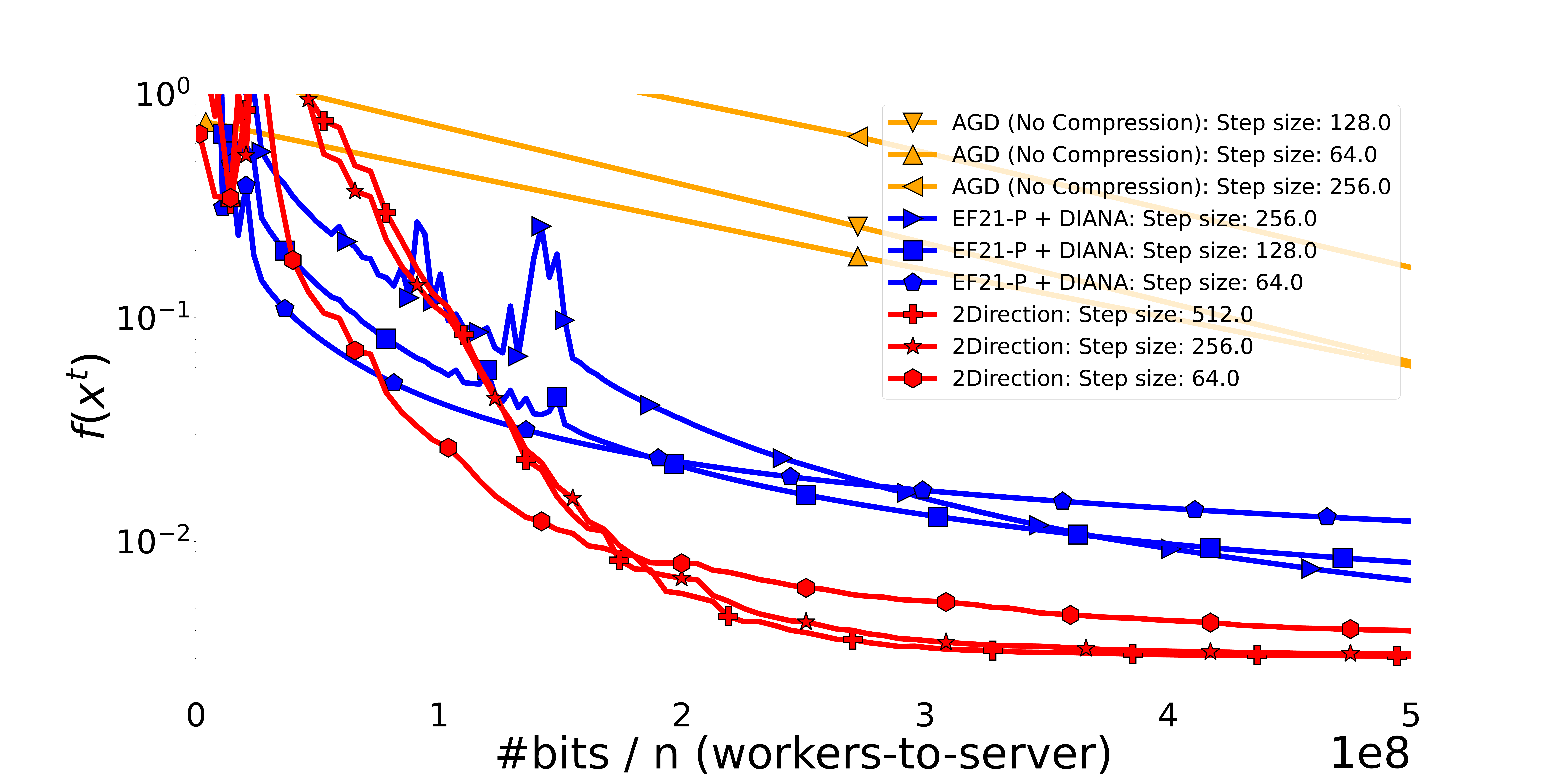}
  \end{subfigure}
  \begin{subfigure}{.49\textwidth}
      \centering
      \includegraphics[width=\textwidth]{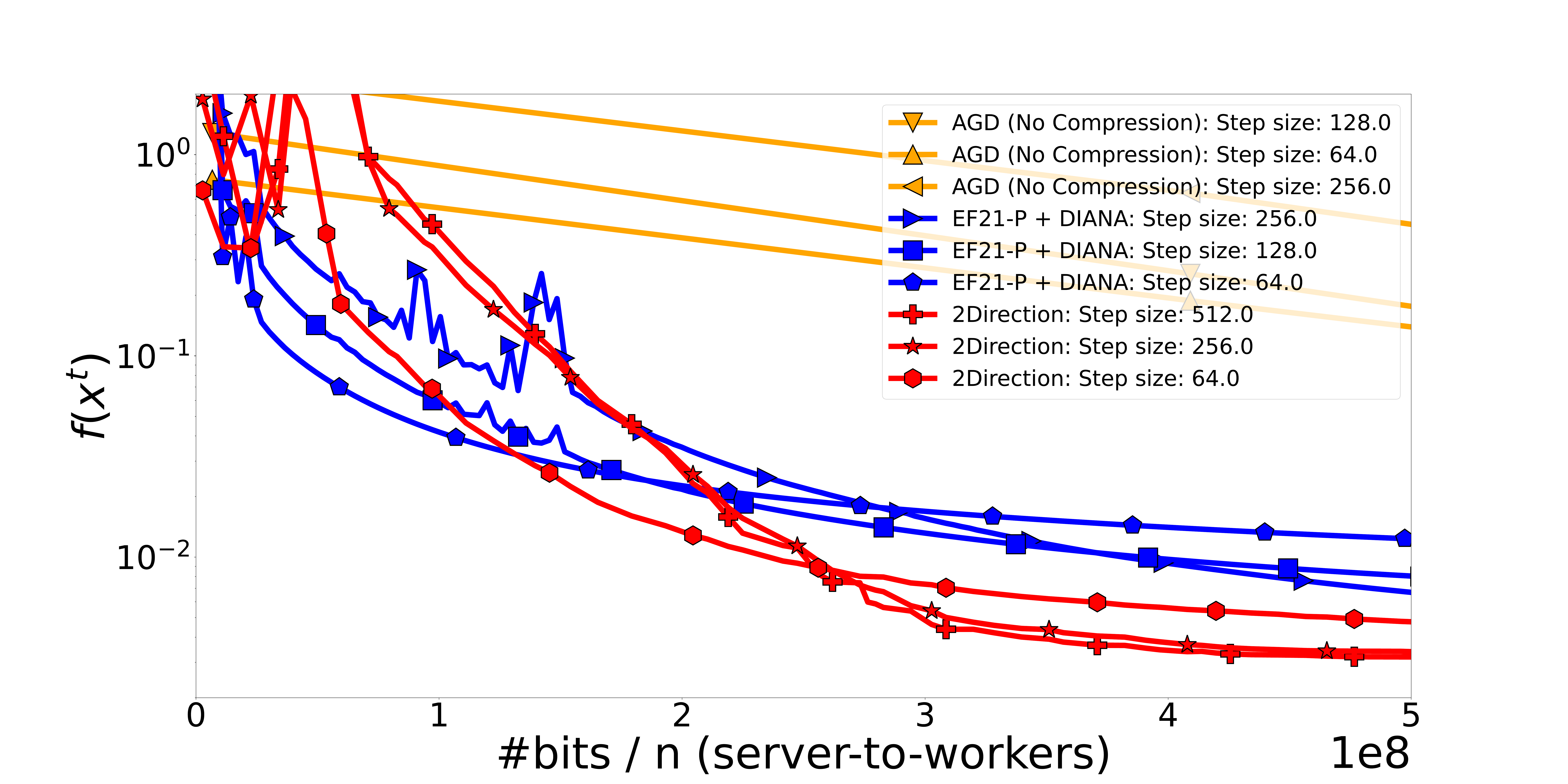}
    \end{subfigure}\hfill
  \caption{Logistic Regression with \textit{real-sim} dataset. \# of workers $n = 100.$ $K = 1000$ in all compressors.}
  \label{fig:real-sim}
\end{figure}

\begin{figure}[h]
  \centering
  \begin{subfigure}{.49\textwidth}
    \centering
    \includegraphics[width=\textwidth]{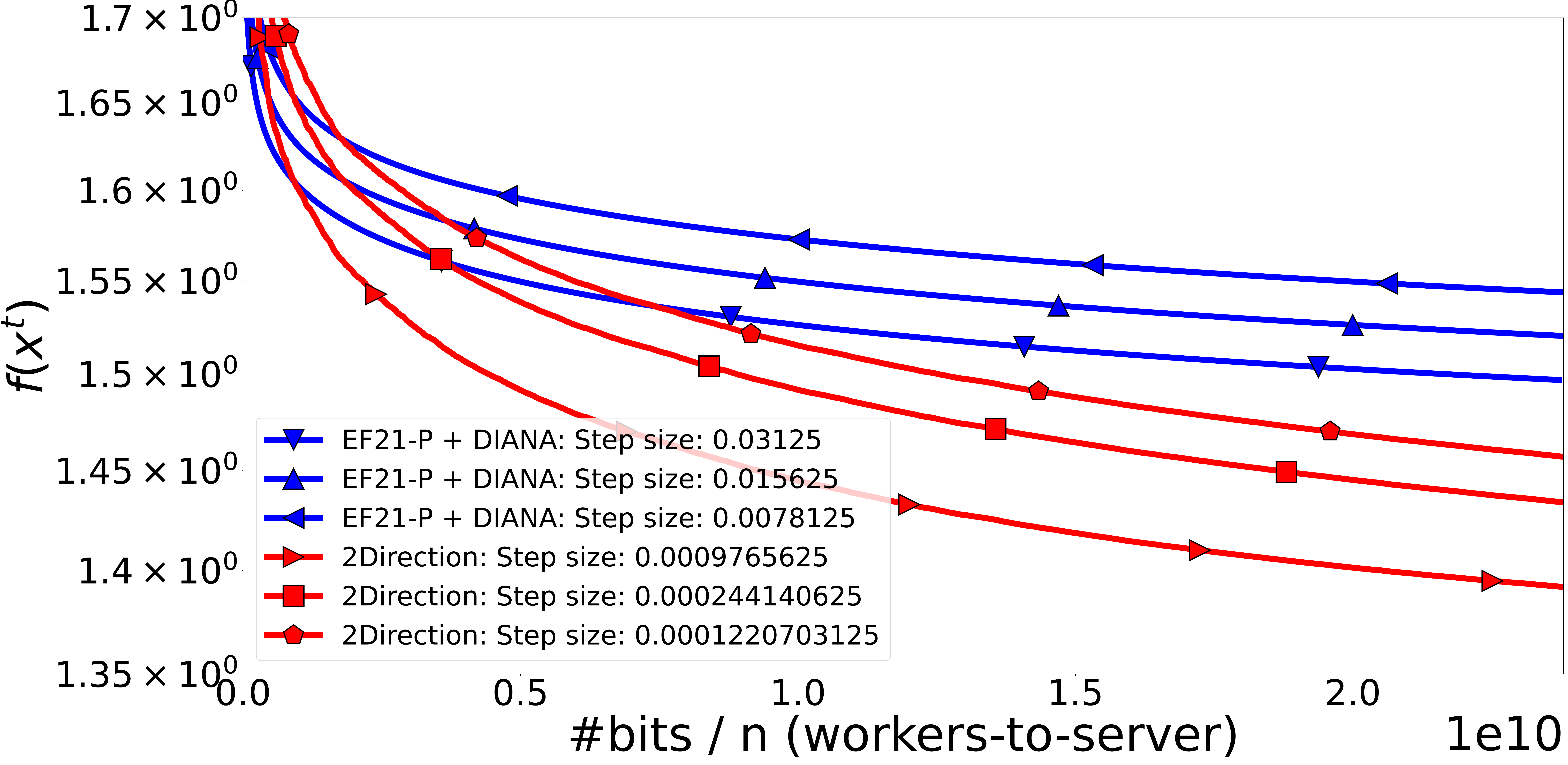}
  \end{subfigure}
  \begin{subfigure}{.49\textwidth}
      \centering
      \includegraphics[width=\textwidth]{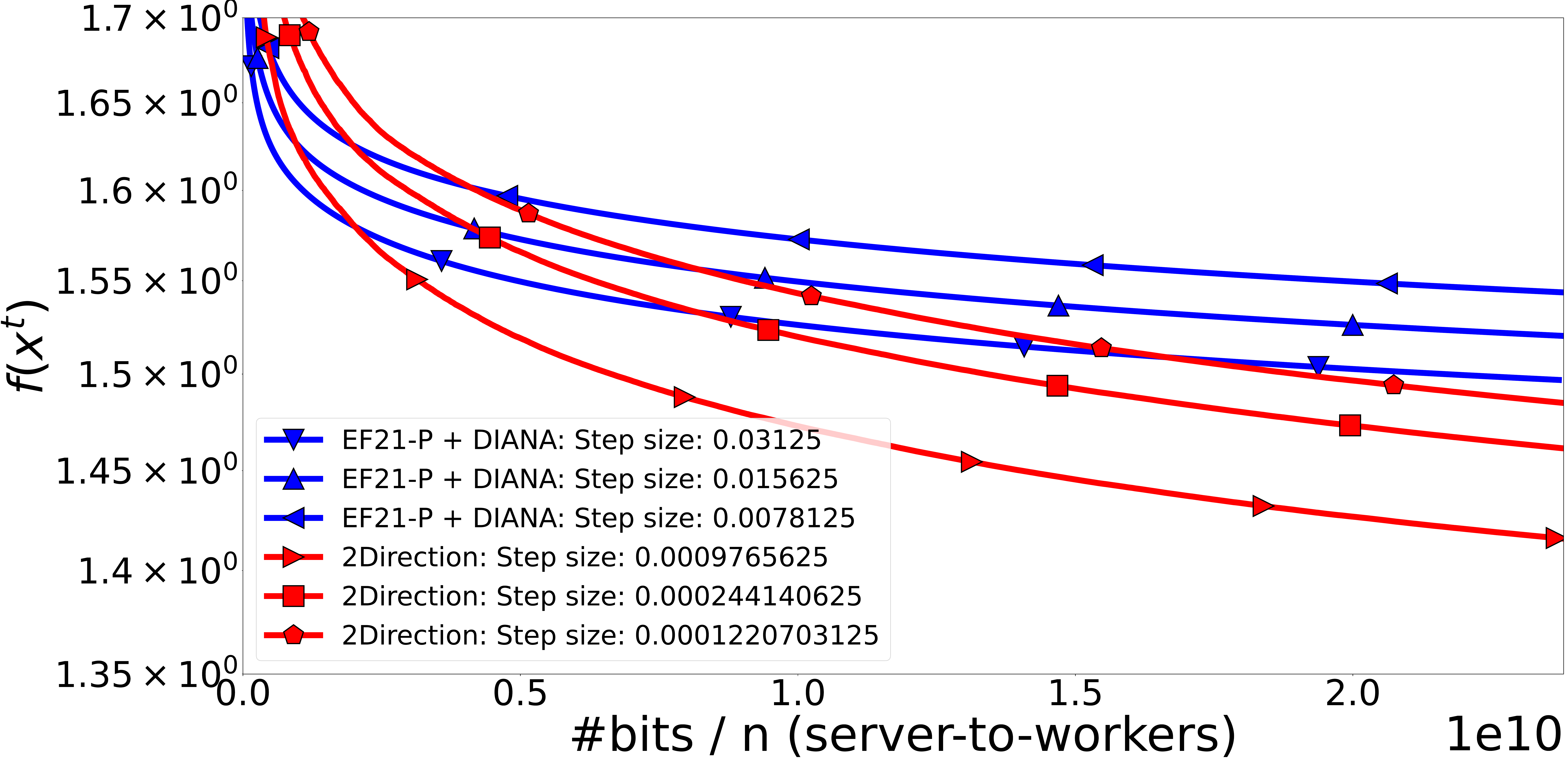}
    \end{subfigure}\hfill
  \caption{Logistic Regression with \textit{CIFAR10} dataset. \# of workers $n = 10.$ $K = 1000$ in all compressors.}
  \label{fig:cifar10_10}
\end{figure}

\begin{figure}[h]
  \centering
  \begin{subfigure}{.49\textwidth}
    \centering
    \includegraphics[width=\textwidth]{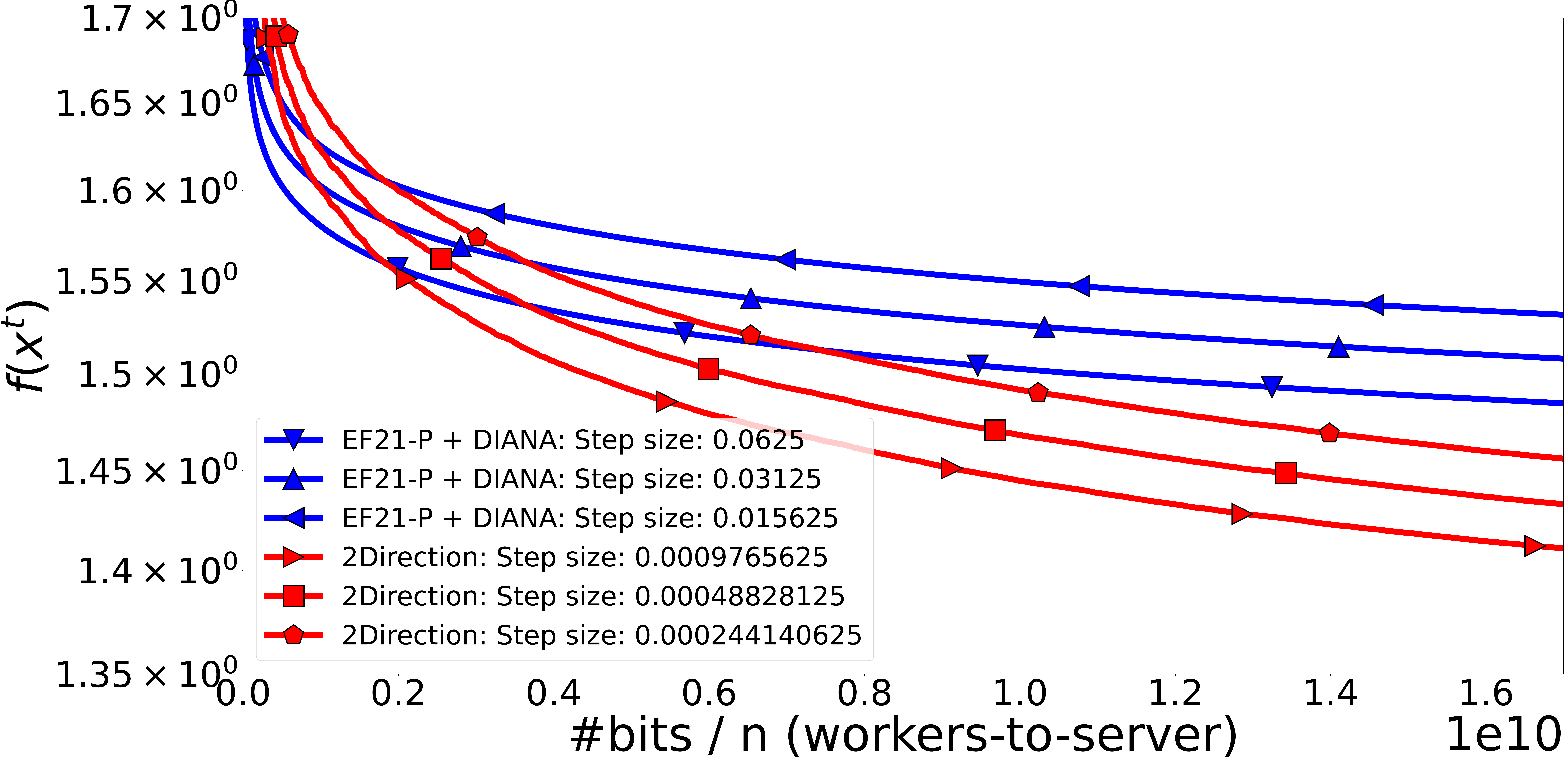}
  \end{subfigure}
  \begin{subfigure}{.49\textwidth}
      \centering
      \includegraphics[width=\textwidth]{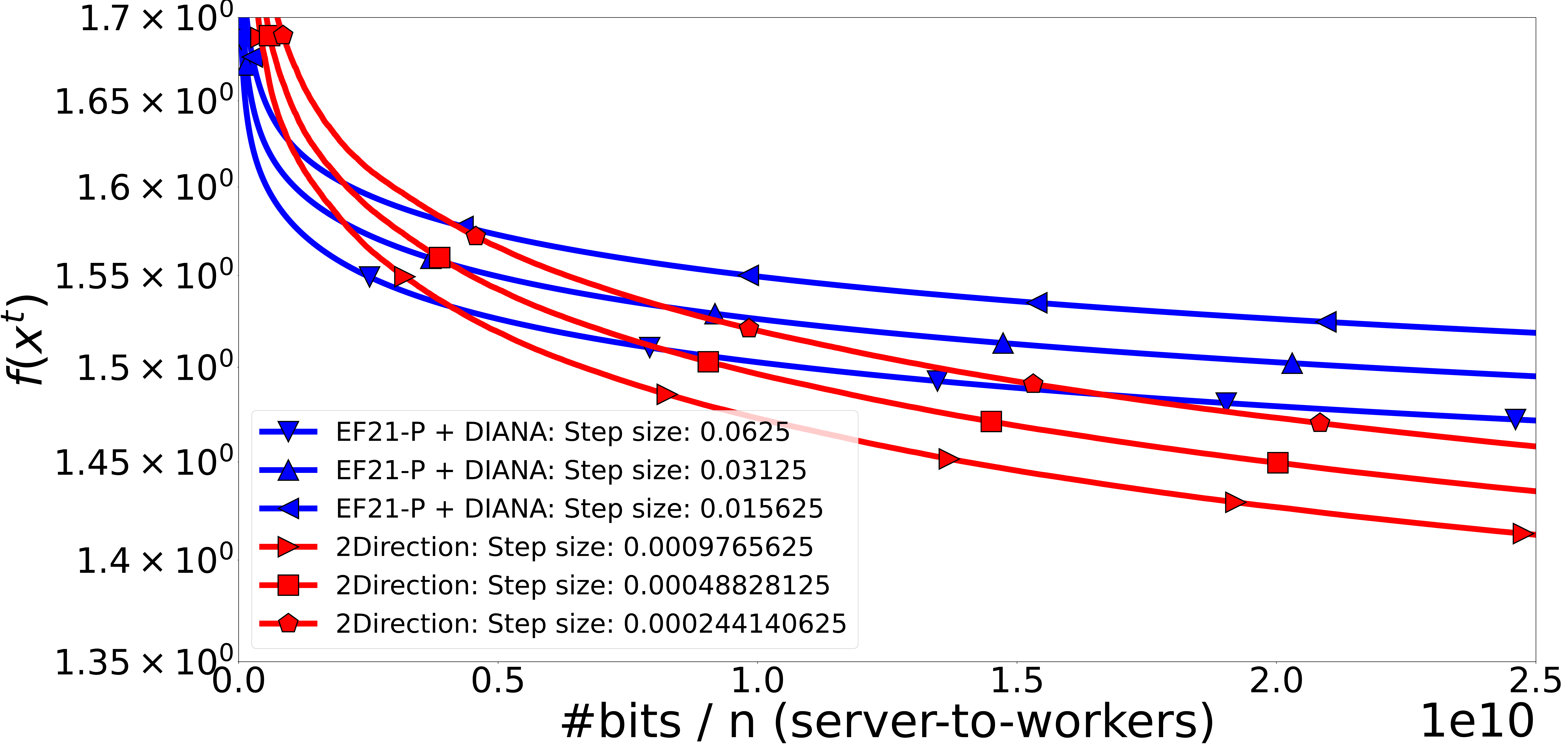}
    \end{subfigure}\hfill
  \caption{Logistic Regression with \textit{CIFAR10} dataset. \# of workers $n = 100.$ $K = 1000$ in all compressors.}
  \label{fig:cifar10_100}
\end{figure}

\newpage
\section{Convergence Rate of \algnamebig{CANITA} obtained by \citet{li2021canita}}
\label{sec:canita}
In their Equation (54), \citet{li2021canita} derive the following bound for their \algname{CANITA} method:
\begin{align*}
  \Exp{F^{T+1}} \leq \cO\left(\max\left \{\frac{(1 + \omega)^3}{T^3}, \frac{(1 + b)(\beta + 3/2) L}{T^2}\right \}\right).
\end{align*}
In the regime when $\omega \geq n,$ choosing $b = \omega$ and $\beta = \Theta\left(\frac{\omega}{n}\right)$ in their Equation (10) gives
\begin{align*}
  \Exp{F^{T+1}} &\leq \cO\left(\max\left \{\frac{(1 + \omega)^3}{T^3}, \frac{(1 + b)(\beta + 3/2) L}{T^2}\right \}\right) \\
  &= \cO\left(\max\left \{\frac{(1 + \omega)^3}{T^3}, \frac{\omega (\omega / n + 3/2) L}{T^2}\right \}\right) \\
  &= \cO\left(\max\left \{\frac{(1 + \omega)^3}{T^3}, \frac{\omega^2 L}{n T^2} \right \}\right).
\end{align*}
This means that the correct convergence rate of the \algname{CANITA} method \citep{li2021canita} is 
\begin{align}
  \label{eq:canita_rate}
  T = \begin{cases}
   \Theta\left(\frac{\omega}{\varepsilon^{1/3}} + \frac{\omega}{\sqrt{n}} \sqrt{\frac{L}{\varepsilon}}\right), & \omega \geq n, \\
   \Theta\left(\frac{\omega}{\varepsilon^{1/3}} + \left(1 + \frac{\omega^{3/4}}{n^{1/4}}\right) \sqrt{\frac{L}{\varepsilon}}\right), & \omega < n.
  \end{cases}
\end{align}
Comparing this result with our Theorem~\ref{theorem:main_theorem_general_convex} describing the convergence of our method \algname{2Direction}, one can see that in the low accuracy regimes (in particular, when $\frac{\omega}{\varepsilon^{1/3}}$ dominates in \eqref{eq:canita_rate}), our result improves $\Theta\left(\frac{1}{\varepsilon^{1/3}}\right)$ to at least $\Theta\left(\log \frac{1}{\varepsilon}\right).$ However, the dependence $\Theta\left(\log \frac{1}{\varepsilon}\right)$ should not be overly  surprising as it was observed by \citet{lan2019unified} already, albeit in a somewhat different context.

\section{Comparison with \algnamebig{ADIANA}}
\label{sec:comp_adiana}
We now want to check that our rate \eqref{eq:optimistic_compl} restores the rate from \citep{ADIANA}. Since \algname{ADIANA} only compresses from the workers to the server, let us take $r = 0,$ the identity compressor operator $\cC^{P}(x) = x$ for all $x \in \R^d,$ which does not perform compression, and, as in \citep{ADIANA}, consider the optimistic case, when $L_{\textnormal{max}} = L$. For this compressor, we have $\alpha = 1$ in \eqref{eq:biased_compressor}. Note that $\mu^r_{\omega, \alpha} = 0.$ Thus the iteration complexity \eqref{eq:optimistic_compl} equals
\begin{align}
  T^{\textnormal{optimistic}} &= \widetilde{\Theta}\Bigg(\max\Bigg\{\sqrt{\frac{L}{\mu}}, \sqrt{\frac{L (\omega + 1)}{n^{1/3} \mu}}, \sqrt{\frac{L (\omega + 1)^{3/2}}{\sqrt{n} \mu}}, \sqrt{\frac{L \omega (\omega + 1)}{n \mu}}, (\omega + 1)\Bigg\}\Bigg) \nonumber \\
  &= \widetilde{\Theta}\Bigg(\max\Bigg\{\sqrt{\frac{L}{\mu}}, \sqrt{\frac{L (\omega + 1)^{3/2}}{\sqrt{n} \mu}}, \sqrt{\frac{L \omega (\omega + 1)}{n \mu}}, (\omega + 1)\Bigg\}\Bigg), \label{eq:adiana_rate}
\end{align}
where we use Young's inequality: $\sqrt{\frac{L}{\mu}}\sqrt{\frac{(\omega + 1)}{n^{1/3}}} \leq \sqrt{\frac{L}{\mu}}\sqrt{\frac{1}{3} \times 1^3 + \frac{2}{3} \frac{(\omega + 1)^{3/2}}{\sqrt{n}}}.$ Without the server-to-worker compression, Algorithm~\ref{alg:bi_diana} has the same iteration \eqref{eq:adiana_rate} and communication complexity as \citep{ADIANA}.

\end{document}